\documentclass[11pt]{article}
\usepackage{amsmath}
\usepackage{amssymb}
\usepackage{amsthm}
\usepackage{graphicx}
\usepackage{microtype}
\usepackage{hyperref}
\usepackage{titlesec}
\usepackage[titletoc,toc,title]{appendix}

\newtheorem{theorem}{Theorem}[section]
\newtheorem{lemma}[theorem]{Lemma}

\newtheorem{proposition}[theorem]{Proposition}
\newtheorem{corollary}[theorem]{Corollary}
\theoremstyle{definition}
\newtheorem{question}[theorem]{Question}

\newtheorem{definition}[theorem]{Definition}
\newtheorem{example}[theorem]{Example}

\newtheorem{remark}[theorem]{Remark}
\newtheorem*{Acknowledgements}{Acknowledgements}

\def\C{\mathcal{C}}
\def\d{\displaystyle}

\def\c{\gamma}
\def\x{\xi}

\def\L{\mathcal{L}}

\def\U{\mathcal{ULFP}}

\def\EE{TP}

\def\C(Z){C(Z^{\circ})}
\def\projection{TP }
\def\Projection{TP }
\def\PP{tp}
\def\Proj{\mathcal{TP}}
\def\Exp{\mathcal{I}}
\def\4{6}

\begin{document}
\title{\textbf{Weak tight geodesics in the curve complex: examples and gaps}}
\author{Yohsuke Watanabe}
\maketitle
\date
\begin{abstract}
Tight geodesics were introduced by Masur--Minsky in \cite{MM2}. They and their hierarchies have been a powerful tool in the study of the curve complex, mapping class groups, Teichm\"{u}ller spaces, and hyperbolic $3$--manifolds. In the same paper, they showed that there are at least one and at most finitely many tight geodesics between any two vertices in the curve complex. Bowditch found a uniform finiteness property on tight geodesics \cite{BO2}, and this property has given further important applications in some of the above studies. In this paper, we introduce classes of geodesics which are not tight but still have the uniform finiteness property. These classes of geodesics will be obtained as examples of weak tight geodesics which were introduced and shown to have the property in \cite{W2}. 
The aim of this paper is to study about weak tight geodesics focusing on giving examples of them with canonical constructions and investigating gaps between two classes of them. Our main investigation will be on weak tight geodesics contained in the class of $M$--weakly tight geodesics where $M$ is the bound given by the bounded geodesic image theorem. As $M$--weakly tight geodesics contain tight geodesics, the classes of weak tight geodesics to be introduced in this paper will live around tight geodesics. In appendices, we expand some of these studies to outside of the class of $M$--weakly tight geodesics.

\end{abstract}

\section{Introduction}
Let $S$ be a compact surface and $\xi(S)=3g+b-3$ where $g$ is the number of genus and $b$ is the number of the boundary components of $S$. Our main space in this paper is the curve complex $C(S)$ defined by Harvey \cite{HAR}. The vertices of $C(S)$ are free isotopy classes of simple, closed, essential, and non--peripheral curves in $S$. The simplices are spanned by collections of curves that are mutually disjoint. If $\x(S)=1$ then the simplices are spanned by the collections of curves that mutually intersect once if $S=S_{1,1}$ and twice if $S=S_{0,4}$. Our main focus will be on the $1$--skeleton of $C(S)$, called the curve graph. It is locally infinite and path--connected. Moreover, with the graph metric $d_{S}$, it is an infinite--diameter space \cite{MM1}. In general, there are infinitely many geodesics between a pair of vertices in $C(S)$.

In \cite{MM2}, Masur--Minsky introduced a class of geodesics called \emph{tight geodesics}. Tight geodesics and hierarchies of tight geodesics have been a powerful tool in the study of the curve complex, mapping class groups, Teichm\"{u}ller spaces, and hyperbolic $3$--manifolds.
Although, there are infinitely many geodesics between a pair of vertices in $C(S)$ in general, Masur--Minsky showed that there are at least one and only finitely many tight geodesics between any pair of vertices in $C(S)$ \cite{MM2}. In fact, due to Bowditch's work in \cite{BO2}, we know that tight geodesics satisfy a stronger property, see Theorem \ref{webb's} for more specific and general statements. We refer this property as Bowditch's uniform finiteness property for the rest of this paper. The property has various important applications; for instance, Bowditch showed that  mapping class groups action on the curve complex is acylindrical and that the stable length of pseudo--Anosov mapping classes are uniformly rational \cite{BO2}. Kida showed that the curve complex has Yu's property A \cite{Kida}. Bell--Fujiwara showed that the asymptotic dimension of the curve complex is finite \cite{BF}. This was a key result in a work of Bestvina--Bromberg--Fujiwara where they prove that the asymptotic dimension of mapping class groups is finite \cite{BBF}.

In \cite{W2}, the author introduced \emph{weak tight geodesics} and showed that they also satisfy Bowditch's uniform finiteness property by proving that the curve graphs behave like uniformly locally finite graphs under Masur--Minsky's subsurface projections, see $\S \ref{mol}$. However, only known example of weak tight geodesics so far is tight geodesics; in this paper, we find other examples of weak tight geodesics with canonical constructions, answering some questions asked in \cite{W2}. The existence of weak tight geodesics, which we present in this paper, easily follows from their constructions, and we distinguish them from tight geodesics by providing appropriate examples of pairs of vertices. The main class of weak tight geodesics will be \emph{\Projection geodesics}. They will arise from our investigation on the class of $M$--weakly tight geodesics where $M$ is the bound given by the Bounded Geodesic Image Theorem. Also, we study gaps between two classes of weak tight geodesics.

Before we recall notion of weak tight geodesics, we state some motivations for studying about weak tight geodesics from various aspects. 

In their recent work, Birman--Margalit--Menasco introduced \emph{efficient geodesics}: they showed that there are at least one and only finitely many efficient geodesics between any pair of vertices in $C(S)$ and that efficient geodesics are different from tight geodesics \cite{BMM}. However, it is not clear whether Bowditch's uniform finiteness property holds on efficient geodesics or not; hence it would be interesting to study whether efficient geodesics are an example of weak tight geodesics or not.

The definition of weak tight geodesics do not require surface topology. They are purely defined in terms of subsurface projection distances. Bestvina--Feighn introduced the notion of projections, called subfactor projections, on complexes which $Out(F_{n})$ acts on \cite{BEF2}. Therefore, the definition of weak tight geodesics makes sense there. Bell--Fujiwara only need Bowditch's uniform finiteness property for their finite asymptotic dimension result of the curve complex besides of the hyperbolicity; clearly one could use weak tight geodesics instead of tight geodesics in their proof. In fact, many $Out(F_{n})$--complexes are known to be hyperbolic, for instance, the free splitting complex due to Handel--Mosher \cite{HM} and the free factor complex due to Bestvina--Feighn \cite{BEF}. Hence, it would be interesting to apply the notion of weak tight geodesics to study their asymptotic dimensions, but this requires further studies on subfactor projections as they do not seem to have a complete list of projections such as annular projections. Some of these issues will be discussed in a forthcoming paper of the author. However, our construction of weak tight geodesics in this paper, which uses projections, could be useful there. 

Tight geodesics are shown to capture internal geometry of certain hyperbolic $3$--manifolds, which is due to Bowditch \cite{BO3} and Minsky \cite{M1}. Let $N$ denote a hyperbolic $3$--manifold which admits homotopy equivalence to $S$ with the correspondence between boundary components of $S$ and parabolic cusps of $N$. A curve in $S$ can be realized uniquely as a closed geodesic in $N$. Let $x,y\in C(S)$ and let $\{x=v_{0}, v_{1}, \cdots ,v_{d_{S}(x,y)}=y\}$ be a tight multigeodesic between them. Their length bounds theorem says that if the lengths of $x$ and $y$ are less than $k$ in $N$, then there exists $K(\x(S),k)$ such that the length of $v_{i}$ is less than $K$ in $N$ for all $i$. (We note that Minky's results are expressed in terms of hierarchies. Some relations between their results are stated in \cite[\S1]{BO3}.) Perhaps, it would be interesting to consider this result in the setting of weak tight geodesics where $K$ shall also depend on the classes of weak tight geodesics. However, for instance in Bowditch's proof, the proof requires a special property of ``tightness'' which is that if some curve intersects $v_{i}$ then it intersects $v_{i-1}$ or $v_{i+1}$. Weak tight geodesics do not have this property in general ($D$--weakly tight geodesics where $D>M$ never have it.). Nevertheless, the property is crucial in his proof, which we briefly indicate here: the proof adapts his proof of an earlier (than \cite{BO2}) finiteness result on tight geodesics from the same paper \cite{BO3}; it says that the set of curves contained in tight multigeodesics between $x$ and $y$ at time $i$-point is finite for all $i$. The proof is done by a geometric limit argument: fix a hyperbolic metric on $S$, and for the rest of this discussion, curves are assumed to be at their geodesic representatives in this metric. Suppose the statement is false, then there exists a sequence of multicurves $\{v_{p}^{n}\}$ which arise at time $p$--point on tight multigeodesics between $x$ and $y$ whose total length goes to infinity. Let $F_{p}$ denote the subsurface filled by the minimal geodesic laminations obtained as the limit of $\{v_{p}^{n}\}$ by Hausdorff convergence after passing to a subsequence if necessary. He shows that there exists a ``taut'' sequence of subsurfaces containing $F_{p}$ which can be used for a short cut on a tight multigeodesic between $x$ and $y$ by his ``2/3 lemma'', a contradiction. However, we emphasize that 2/3 lemma works only on a taut sequence, and the fact that he obtains a taut sequence via laminations from multicurves is because those multicurves are from tight multigeodesics which have the special property stated above. 
The proof of the length bounds result is also in this spirit, namely by a geometric limit argument supposing that the statement is false. However, it is a lot more complicated, so we discuss only to confirm that the special property is again crucial. For simplicity of the discussion, let us assume there are no accidental cusps in $N$. First, he observes ``tube penetration lemma" which says that there exists a Margulis constant $\eta(k, \x(S))$ so that if $T$ is a Margulis tube with the constant, then either $v_{i} \cap T=\emptyset$ or $v_{i}$ is the core curve of $T$ for all $i$; in this case, $v_{i}$ is said to be non--penetrating. Another important observation of his says that a sequence of non--penetrating multicurves give arise to a sequence of subsurfaces that can be used to find a short cut on a tight multigeodesic between $x$ and $y$, a contradiction. However, the proof of tube penetration lemma requires the special property. Hence the proof does not seem to apply in the setting of weak tight geodesics, at least directly.

\begin{Acknowledgements}
This work was not possible without \cite{W2}, which was written while the author was working under Ken Bromberg. The author is grateful to Ken Bromberg, without whose encouragement and support \cite{W2} would never have been possible.
\end{Acknowledgements}

\section{Tight geodesics and weak tight geodesics}
Before we recall the notion of weak tight geodesics from \cite{W2}, we recall tight geodesics theory from Masur--Minsky \cite{MM2}, Bowditch \cite{BO2}, and Webb \cite{WEB2}. 

\subsection{Tight geodesics}\label{mercy}
Tight geodesics were introduced by Masur--Minsky.
\begin{definition}[\cite{MM2}]\label{d}
Suppose $\x(S)=1.$ Every geodesic is defined to be a tight geodesic.
Suppose $\x(S)>1.$ Let $A\subseteq S$. We let $R(A)$ denote a regular neighborhood of $A$ and $F(A)$ denote the subsurface constructed by taking $R(A)$ and filling in every complementary component of $R(A)$ in $S$ which is a disk or a peripheral annulus. 
\begin{itemize}
\item A multicurve is a collection of curves which form a simplex in $C(S)$. A multigeodesic is a sequence of multicurves $\{v_{i}\}$ such that $d_{S}(a,b)=|p-q|$ for all $a\in v_{p},b\in v_{q}$, and for all $p,q$. A tight multigeodesic is a multigeodesic $\{v_{i}\}$ such that $v_{i}=\partial (F(v_{i-1} \cup v_{i+1}))$ for all $i$. 

\item Let $x,y \in C(S)$. A tight geodesic between $x$ and $y$ is a geodesic $\{u_{i}\}$ such that $u_{i}\in v_{i}$ for all $i$, where $\{v_{i}\}$ is a tight multigeodesic between $x$ and $y$.
\end{itemize}
\end{definition}

Masur--Minsky showed that a tight geodesic exists between any two vertices in $C(S)$, which follows by checking the existence of tight multigeodesics between two multicurves which are distance $3$ apart;

\begin{lemma}[\cite{MM2}]\label{construction}
Let $\{ v_{0},v_{1},v_{2},v_{3}\}$ be a multigeodesic. Let $v_{1}^{t}=\partial(F(v_{0}\cup v_{2}))$, furthermore let $v_{2}^{t}=\partial(F(v_{1}^{t}\cup v_{3}))$. Then $F(v_{0} \cup v_{2}) =F(v_{0} \cup v_{2}^{t})$, in particular $v_{1}^{t}=\partial(F(v_{0}\cup v_{2}^{t}))$. 
\end{lemma}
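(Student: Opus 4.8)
The plan is to recover each subsurface $F(\cdot)$ from the set of curves disjoint from it and then compare these sets. Recall that if a multicurve $A$ fills an essential subsurface of complexity at least one, then a vertex $\gamma$ of $C(S)$ satisfies $i(\gamma,F(A))=0$ (geometric intersection number) if and only if $i(\gamma,A)=0$ — a curve missing $A$ is peripheral or trivial in $F(A)$ since $A$ fills $F(A)$, and the converse is clear — and such a subsurface is determined up to isotopy by this set of curves. Write $W=F(v_{0}\cup v_{2})$, so $v_{1}^{t}=\partial W$ and $v_{0}\cup v_{2}$ fills $W$, and $W'=F(v_{1}^{t}\cup v_{3})$, so $v_{2}^{t}=\partial W'$ and $v_{1}^{t}\cup v_{3}$ fills $W'$; by the triangle inequality $d_{S}(v_{1}^{t},v_{3})\le d_{S}(v_{1}^{t},v_{2})+d_{S}(v_{2},v_{3})=2$, so $v_{2}^{t}$ is a well-defined nonempty multicurve. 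It then suffices to prove that a vertex $\delta$ of $C(S)$ has $i(\delta,v_{0}\cup v_{2})=0$ if and only if $i(\delta,v_{0}\cup v_{2}^{t})=0$. I will use the elementary facts that $\partial F(B)$ is disjoint from $B$, that a curve disjoint from $\partial W$ is isotopic into $W$ or into $S\setminus W$, and that a curve disjoint from a multicurve filling a subsurface $V$ is either isotopic to a component of $\partial V$ or lies essentially in $S\setminus V$. The only place the hypothesis enters is via the triangle inequality again: since $d_{S}(v_{0},v_{3})=3$, no vertex of $C(S)$ is disjoint from both $v_{0}$ and $v_{3}$.

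The geometric heart is the claim that $v_{2}^{t}\subseteq\overline{W}$, i.e.\ every component of $v_{2}^{t}$ is isotopic into $\overline{W}$. Indeed, a component $c$ of $v_{2}^{t}=\partial W'$ is disjoint from $v_{1}^{t}$ and from $v_{3}$; being disjoint from $\partial W=v_{1}^{t}$, it is isotopic into $W$ or into $S\setminus W$, and if it were essentially in $S\setminus W$ it would also be disjoint from $v_{0}$ and $v_{2}$, hence disjoint from both $v_{0}$ and $v_{3}$, contradicting $d_{S}(v_{0},v_{3})=3$.

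Now for the two inclusions. If $i(\delta,v_{0}\cup v_{2})=0$ then $\delta$ is disjoint from $W$, hence from $v_{1}^{t}$, so $\delta$ is isotopic into $W$ or into $S\setminus W$. In the first case $v_{0}\cup v_{2}$ fills $W$ and $\delta$ misses it, so $\delta$ is isotopic to a component of $v_{1}^{t}=\partial W$; since $v_{1}^{t}$ is disjoint from $v_{2}^{t}=\partial W'$ (being part of a filling multicurve of $W'$), $\delta$ is disjoint from $v_{2}^{t}$. In the second case $\delta$ is disjoint from $\overline{W}\supseteq v_{2}^{t}$ by the claim. Either way $i(\delta,v_{0}\cup v_{2}^{t})=0$. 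Conversely, suppose $i(\delta,v_{0}\cup v_{2}^{t})=0$. Since $\delta$ misses $v_{2}^{t}=\partial W'$, it is isotopic into $W'$ or into $S\setminus W'$; the latter is impossible, since it would force $\delta$ to be disjoint from $v_{3}\subseteq W'$ and from $v_{0}$, again contradicting $d_{S}(v_{0},v_{3})=3$. So $\delta$ is isotopic into $W'$. Each component $e$ of $v_{2}$ is disjoint from $v_{1}^{t}$ (as $v_{2}\subseteq W$) and from $v_{3}$ (as $d_{S}(v_{2},v_{3})=1$), so $e$ is isotopic to a component of $v_{2}^{t}$ or lies essentially in $S\setminus W'$; in the first case $i(\delta,e)=0$ because $\delta$ misses $v_{2}^{t}$, in the second because $\delta$ is isotopic into $W'$. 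Hence $i(\delta,v_{2})=0$ and so $i(\delta,v_{0}\cup v_{2})=0$. This yields $F(v_{0}\cup v_{2})=F(v_{0}\cup v_{2}^{t})$, and then $v_{1}^{t}=\partial F(v_{0}\cup v_{2})=\partial F(v_{0}\cup v_{2}^{t})$.

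I expect the main obstacle to be not conceptual but the careful handling of subsurface topology: $v_{0}\cup v_{2}$ and $v_{1}^{t}\cup v_{3}$ are multicurves, so $W$ and $W'$ may be disconnected, and one must run the ``isotopic into $W$ or into $S\setminus W$'' dichotomy and the ``fills'' arguments component-by-component, tracking peripheral and potential annular degeneracies so that the recovery of $F(A)$ from its disjoint-curve set is legitimately applied. Alternatively one can bypass that recovery step by arguing directly that $v_{0}\cup v_{2}^{t}$ lies in $\overline{W}$ (by the claim) and fills $W$ (the contrapositive is exactly the second inclusion above), which gives $F(v_{0}\cup v_{2}^{t})=W$ without appeal to the general characterization.
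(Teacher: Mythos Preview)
The paper does not give its own proof of this lemma: it is stated with the citation \cite{MM2} and followed only by the remark ``The point of the above statement is that tightness is preserved at time $1$--point after tightening at time $2$--point.'' So there is nothing in the paper to compare against directly.

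Your argument is correct and is essentially the standard one from \cite{MM2}. The organizing idea---recover $F(A)$ from the set of curves disjoint from $A$, then show these sets agree for $v_{0}\cup v_{2}$ and $v_{0}\cup v_{2}^{t}$---is exactly how Masur--Minsky proceed, and your key geometric step (that every component of $v_{2}^{t}$ lies in $\overline{W}$, using $d_{S}(v_{0},v_{3})=3$ to rule out the complementary case) is the same obstruction they exploit. Your self-diagnosed caveat is accurate: the only places requiring care are the boundary/peripheral cases in the ``into $W$ or into $S\setminus W$'' dichotomy (e.g.\ when $\delta$ is isotopic to a component of $\partial W'$) and the fact that $W$, $W'$ may be disconnected; but these are routine and do not affect the logic. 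The alternative route you sketch at the end---show directly that $v_{0}\cup v_{2}^{t}\subseteq\overline{W}$ and fills $W$---is cleaner and avoids invoking the disjoint-curve characterization as a black box; that is arguably the more robust way to write it up.
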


The point of the above statement is that tightness is preserved at time $1$--point after tightening at time $2$--point. Therefore, we can iterate the tightening process inductively on multigeodesics of any length. Hence, a tight geodesic exists between any two vertices in $C(S)$. Furthermore, Masur--Minsky also showed that there are only finitely many tight geodesics between any two vertices in $C(S)$.

Bowditch found a stronger finiteness statement on tight geodesics. First we define

\begin{definition}
Let $a,b\in C(S)$, $A,B\subseteq C(S)$, and $r>0$. 
\begin{itemize}
\item Let $\L(a,b)$ and $\L_{T}(a,b)$ be the set of all geodesics and all tight geodesics between $a$ and $b$ respectively.
\item Let $\d G^{T}(a,b):=\{v\in C(S)| v\in g\in \L_{T}(a,b)\}$, $ G^{T}(A,B):=\cup_{a\in A, b\in B} G^{T}(a,b)$, and $G^{T}(a,b;r):=G^{T}(N_{r}(a), N_{r}(b))$ where $N_{r}(a)$ and $N_{r}(b)$ is a radius $r$--ball around $a$ and $b$ respectively. 
\end{itemize}
\end{definition}

Here, wecall that the curve complex is $\delta$--hyperbolic, which is originally due to Masur--Minsky \cite{MM1}. Indeed, the hyperbolicity constant has been shown to be uniform by Aougab \cite{AOU}, Bowditch \cite{BO4}, Clay--Rafi--Schleimer \cite{CRS}, and Hensel--Przytycki--Webb \cite{HPW}.

The following result was originally due to Bowditch \cite{BO2} without computable bounds. Recently, Webb explicitly computed the bounds \cite{WEB2}.
 
\begin{theorem}[\cite{BO2},  \cite{WEB2}]\label{webb's}
Suppose $\x(S)\geq 2$. Let $a,b\in C(S)$, $r\geq 0$, $\delta$ be the hyperbolicity constant. There exists $K$ depending only on $\x(S)$ such that the following holds:
\begin{enumerate}
\item  If $c\in g\in \L(a,b)$, then $|G^{T}(a,b)\cap N_{\delta}(c)|\leq K.$

\item If $c\in g\in \L(a,b)$ and $c \notin N_{r+j}(a)\cup N_{r+j}(b)$, then $|G^{T}(a,b;r)\cap N_{2\delta}(c)|\leq K.$ 
\end{enumerate}
\end{theorem}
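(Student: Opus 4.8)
The plan is to derive both parts from the sharp \emph{time--slice} form of Bowditch's finiteness theorem: there is $K_{0}=K_{0}(\x(S))$ so that for all $p,q\in C(S)$ and every position $i$, at most $K_{0}$ curves occur as the $i$--th vertex of a tight multigeodesic from $p$ to $q$. I would prove this time--slice bound by induction on $\x(S)$, with $\delta$--hyperbolicity of $C(S)$ and the Bounded Geodesic Image Theorem (in Webb's explicit form, with constant $M$) as the two engines, following Bowditch's argument and Webb's bookkeeping of the constants. Granting the time--slice bound, part (1) is immediate: if $v\in G^{T}(a,b)\cap N_{\delta}(c)$ lies on a tight geodesic $h$ from $a$ to $b$, then the triangle inequality gives $|d_{S}(a,v)-d_{S}(a,c)|\le\delta$, so $v$ is the $i$--th vertex of $h$ for one of at most $2\delta+1$ values of $i$; summing $K_{0}$ over these positions gives $K=(2\delta+1)K_{0}$. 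Part (2) is the same argument run with $N_{r}(a)$ and $N_{r}(b)$ in place of $a$ and $b$: replacing endpoints by $r$--balls changes $d_{W}$ and $\pi_{W}$ by an additive $O(M)$ only for subsurfaces $W$ with $\partial W$ within $O(\delta)$ of $a$ or $b$ (by the Bounded Geodesic Image Theorem, since a geodesic of length $\le r$ out of $a$ cannot reach a $\partial W$ more than $r+1$ away), and choosing $j\sim2\delta$ ensures that $c$, and the subsurfaces relevant near $c$, avoid $N_{r+O(\delta)}(a)\cup N_{r+O(\delta)}(b)$; hence the witnesses near $c$ and their projection data --- and therefore the tight geodesics near $c$, which the time--slice argument shows are controlled by exactly that local data with bounded multiplicity --- are pinned down by $(a,b)$ alone up to bounded ambiguity, and one again lands on a bound depending only on $\x(S)$.

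\medskip

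For the time--slice bound I would induct on $\x(S)$. In the base case $\x(S)=2$ every proper essential subsurface $W\subsetneq S$ has $\x(W)\le1$ or is an annulus, so $\pi_{W}$ of any geodesic of $C(S)$ none of whose vertices meets $\partial W$ has diameter $\le M$; the $i$--th vertex of a tight multigeodesic from $p$ to $q$ is then pinned down, through its finitely many relevant subsurface projections, by uniform Farey / continued--fraction combinatorics in the $\x(W)=1$ pieces and by the explicit annular version of the theorem, which together bound the number of choices, giving $K_{0}(2)$. For the inductive step, suppose $v\neq v'$ each occur as the $i$--th vertex, on tight multigeodesics $\{v_{j}\}$ and $\{v'_{j}\}$ from $p$ to $q$. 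Tightness gives $v=\partial F(v_{i-1}\cup v_{i+1})$ and $v'=\partial F(v'_{i-1}\cup v'_{i+1})$, with all of $v_{i\pm1},v'_{i\pm1}$ within distance $1$ of $\{v,v'\}$, so $v$ and $v'$ can differ only if this bounded block of local data differs; by the contraction inside the tightness condition --- Bowditch's $2/3$ lemma, which along a taut sequence of subsurfaces forbids shortcutting a subpath $v_{i-1}v_{i}v_{i+1}$ through a curve disjoint from both $v_{i-1}$ and $v_{i+1}$, and thereby pins $v_{i}$ down once a witness is produced --- such a discrepancy must be registered by a proper subsurface $W$ with $d_{W}(p,q)>M$ whose boundary sits near position $i$ along a geodesic. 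The Bounded Geodesic Image Theorem forces a vertex of every geodesic from $p$ to $q$ to miss each such $W$, and a no--backtracking argument in the $\delta$--hyperbolic $C(S)$ (the witnesses, ordered by where their boundaries fall along a geodesic, form an almost monotone sequence) bounds, in terms of $\x(S)$ alone, the number of such $W$ relevant to the fixed position $i$; for each one $\pi_{W}(v)$ is the $m$--th vertex, for a bounded $m$, of a tight geodesic in $C(W)$ between $\pi_{W}(p)$ and $\pi_{W}(q)$, so the inductive hypothesis in $W$ --- of smaller complexity --- bounds its possible values. Multiplying the bounded number of witnesses by the bounded number of values at each gives $K_{0}(\x(S))$.

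\medskip

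I expect the inductive step to be the main obstacle, in two places. Converting ``two distinct curves at one position'' into ``a witnessing subsurface $W$ with $d_{W}(p,q)>M$ localized near that position'' is exactly where tightness enters, through the shortcut ($2/3$) argument; it is delicate because tightness controls $v_{i}$ only via its neighbours $v_{i\pm1}$, so one must propagate control along the geodesic and build an honest taut sequence of subsurfaces rather than merely a Hausdorff limit lamination. Bounding the number of witnesses relevant to a fixed position then needs thin triangles together with the almost--monotonicity of the witness sequence, and the annular witnesses are the fussiest, since annular projections are invisible in $C(S)$ and must be handled through the explicit annular Bounded Geodesic Image Theorem. Webb's contribution, which I would follow, is precisely to carry all of these estimates out with computable constants in place of Bowditch's qualitative finiteness.
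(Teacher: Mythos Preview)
The paper does not prove this theorem; it is quoted from \cite{BO2} and \cite{WEB2}. What the paper \emph{does} prove is the more general Theorem~\ref{weaktight} for $D$--weakly tight geodesics, which specializes to a version of Theorem~\ref{webb's} upon taking $D=M$ (tight geodesics being $M$--weakly tight by Proposition~\ref{tightp}). That argument, carried out in Lemma~\ref{imply}, is completely different from your sketch: it invokes the black-box Theorem~\ref{br} (the uniform local finiteness property via subsurface projections, from \cite{W2}) together with the defining inequality of weak tight geodesics to observe that no three vertices of $G^{D}(a,b)\cap N_{\delta}(c)$ can be pairwise more than $2D$ apart in any single subsurface, whence Theorem~\ref{br} with $k=3$ bounds the slice. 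There is no induction on complexity, no time-slice bound, and no $2/3$ lemma.

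Your proposal instead follows the original Bowditch--Webb route. As a high-level roadmap it is plausible, and your reduction from a time-slice bound to parts (1) and (2) is correct. But be aware that the inductive step you sketch is closer to Bowditch's \cite{BO3} geometric-limit/$2/3$-lemma argument --- which the paper's introduction explicitly singles out as relying on the special ``if a curve meets $v_{i}$ then it meets $v_{i-1}$ or $v_{i+1}$'' feature of tightness --- than to the combinatorial argument of \cite{BO2} or Webb's explicit count in \cite{WEB2}; conflating these is the main risk in your outline. What your approach would buy, if carried through, is Webb's sharp exponential bound in $\x(S)$; what the paper's route via Theorem~\ref{br} buys is that tightness is used only through Proposition~\ref{tightp}, so the same proof covers all $D$--weakly tight geodesics at once --- at the cost, as the Remark after Theorem~\ref{weaktight} concedes, of weaker constants.
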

\begin{remark}\label{WEBS}
Webb showed that $j$ can be taken to be $10\delta +1$, that $K$ grows exponentially with $\x(S)$, and that his exponential growth rate is sharp.
\end{remark}

\subsection{Weak tight geodesics}\label{mol}
Weak tight geodesics were introduced in \cite{W2} as an application of his work, which gives a way to see the curve graphs as uniformly locally finite graphs, see Theorem \ref{br}. The definition of weak tight geodesics, Definition \ref{dokoni}, derives from a special behavior of tight geodesics under Masur--Minsky's subsurface projections, Lemma \ref{tightp}. 
First, we briefly recall subsurface projections, see \cite{MM2} for more detail discussions.
\begin{definition}[\cite{MM2}]
Let $Z$ be a subsurface of $S$. Subsurface projection is a set map $\pi_{Z}:C(S)\longrightarrow C(Z)$ defined as follows. Let $x\in C(S).$
\begin{itemize}
\item If $Z$ is not an annulus, then $\pi_{Z}(x)$ is the set of curves in $Z$ obtained by taking the boundary components of $R(a\cup \partial(Z))$ for all $a\in \{x \cap Z\}$. 
\item If $Z$ is an annulus then we first take the annular cover of $S$ which corresponds to $Z$ and compactly the cover with $\partial(\mathbb{H}^{2})$. We denote the cover by $S^{Z}$. We define the annular curve graph of $Z$ using $S^{Z}$; the vertices are the set of isotopy classes of arcs which connect two boundary components of $S^{Z}$, here the isotopy is relative to $\partial (S^{Z} )$ pointwise. We put the edge between two vertices if they can be disjoint in the interior of $S^{Z}$. Lastly, $\pi_{Z}(x)$ is the set of arcs obtained as the preimage of $x$ via the covering map $S^{Z} \longrightarrow S$.

\item Let $A,B \subseteq C(S)$. For both types of projections, we define $ \displaystyle \pi_{Z}(A):=\cup_{a\in A}\pi_{Z}(a)$ and define $d_{Z}(A,B):=\max_{a\in \pi_{Z}(A), b\in \pi_{Z}(B)}d_{Z}(a,b) .$  
\end{itemize}
\end{definition}
The following theorem is called the Bounded Geodesic Image Theorem, which is originally due to Masur--Minsky and recently due to Webb where he explicitly computes its bound which depends only on the hyperbolicity constant. Namely, the bound is uniform. 

\begin{theorem}[\cite{MM2}, \cite{WEB1}]\label{BGIT}
There exists $M$ such that the following holds for all multigeodesics $\{v_{i}\}_{0}^{n}$ in $C(S)$ and $Z\subsetneq S$. If $\pi_{Z}(v_{i})\neq \emptyset$ for all $ i $, then $d_{Z} (v_{0},v_{n}) \leq M.$
\end{theorem}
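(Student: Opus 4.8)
The plan is to reduce to a statement about vertex geodesics and then run a nearest-point projection argument in the hyperbolic space $C(S)$. For the reduction: given a multigeodesic $\{v_i\}_0^n$ with $\pi_Z(v_i)\neq\emptyset$ for all $i$, choose curves $c\in v_0,\ c'\in v_n$ realizing the maximum defining $d_Z(v_0,v_n)$, and for each $0<i<n$ choose $u_i\in v_i$ meeting $Z$ (possible since $\pi_Z(v_i)\neq\emptyset$); because any choice of representatives of a multigeodesic is again a vertex geodesic, $\{c=u_0,\dots,u_n=c'\}$ is a geodesic in $C(S)$ all of whose vertices meet $Z$, and $d_Z(v_0,v_n)\le d_Z(u_0,u_n)$. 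Hence it suffices to find $M$ so that every vertex geodesic $\{u_i\}_0^n$ in $C(S)$ with $\pi_Z(u_i)\neq\emptyset$ for all $i$ has $d_Z(u_0,u_n)\le M$. Throughout I would use that $C(Z)$ is $\delta$--hyperbolic with the same universal $\delta$ as $C(S)$ (and that $C(Z)$ is quasi-isometric to $\mathbb Z$ when $Z$ is an annulus); it is precisely this uniformity that prevents the final $M$ from depending on $S$ or $Z$.

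The one elementary ingredient is ``coarse Lipschitz along edges'': there is a small explicit constant $B$ such that $d_S(a,b)\le 1$ together with $\pi_Z(a),\pi_Z(b)\neq\emptyset$ forces $d_Z(a,b)\le B$. When $a$ and $b$ are disjoint in $S$, the arcs and curves of $a\cap Z$ and of $b\cap Z$ — respectively their crossing lifts to the annular cover $S^{Z}$ — can be realized disjointly, so $\pi_Z(a)$ and $\pi_Z(b)$ each lie within distance $1$ in $C(Z)$ of a component of $\partial R(\partial Z\cup(a\cap Z)\cup(b\cap Z))$, and those components are mutually disjoint; the degenerate case where that system fills $Z$ only enlarges $B$. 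Applied along $\{u_i\}$ this already gives the crude bound $d_Z(u_0,u_n)\le Bn$, and the entire difficulty is to delete the dependence on the length $n$.

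To do this I would argue by contradiction and show that a large value of $d_Z(u_0,u_n)$ forces every $C(S)$--geodesic from $u_0$ to $u_n$ to enter a bounded neighborhood of the collapsing locus $\mathcal D$ of curves with empty $Z$--projection — i.e.\ to contain a curve disjoint from $Z$ — contradicting the hypothesis on $\{u_i\}$. The point is that $\mathcal D$ agrees, up to bounded Hausdorff distance, with the set of curves disjoint from the multicurve $\partial Z$ (since $C(Z)$ sits within distance $1$ of $\partial Z$ in $C(S)$), and that set is uniformly quasiconvex in the $\delta$--hyperbolic $C(S)$. If $\{u_i\}$ stays at bounded distance from $\mathcal D$ throughout, then by the geometry of nearest-point projection to a quasiconvex set in a hyperbolic space the nearest points $p_0,p_n\in\mathcal D$ to $u_0,u_n$ are close; transporting this through the coarse-Lipschitz estimate of the previous paragraph, together with the local fact that a curve disjoint from something close to $\partial Z$ has $Z$--projection of uniformly bounded diameter, bounds $d_Z(u_0,u_n)$. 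If instead $\{u_i\}$ comes within bounded distance of $\mathcal D$ at some vertex, the same local picture again caps $d_Z(u_0,u_n)$. Either way one reads off $M=M(\delta)$.

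I expect the heart of the difficulty to be making this quantitative: pinning down the uniform quasiconvexity constant of $\mathcal D$ and, more seriously, controlling how $C(S)$--geometry near $\partial Z$ relates to $C(Z)$--distances with constants independent of $n$ and of $\xi(S)$. A coarse path in a hyperbolic space need not shadow a geodesic joining its endpoints, and even an unparametrized quasigeodesic with $n$ vertices only bounds $d_Z$ by $O(n)$, so the final constant genuinely has to be extracted from the interplay of the two (uniformly) hyperbolic metrics together with the fact that $\{u_i\}$ is an honest geodesic rather than merely a path — which is exactly where the explicit value of $M$ in \cite{MM2} and \cite{WEB1} originates.
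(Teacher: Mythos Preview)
The paper does not give its own proof of this theorem: it is quoted as a black box from \cite{MM2} and \cite{WEB1}, and only the resulting constant $M$ is used downstream. So there is nothing in the paper to compare your argument against.

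That said, a word on the sketch itself. The reduction to vertex geodesics and the coarse-Lipschitz bound along edges are both correct and standard, and your overall strategy---quasiconvexity of the collapsing locus $\mathcal{D}$ together with nearest-point projection in the $\delta$--hyperbolic $C(S)$---is indeed the shape of Webb's argument in \cite{WEB1}. But the dichotomy you set up does not close. You write that a large $d_Z(u_0,u_n)$ would force the geodesic ``to enter a bounded neighborhood of $\mathcal{D}$ --- i.e.\ to contain a curve disjoint from $Z$''; these are not the same thing, since the hypothesis only rules out vertices \emph{in} $\mathcal{D}$, and a vertex (for instance any curve lying inside $Z$) can sit at $C(S)$--distance $1$ from $\mathcal{D}$ while still projecting nontrivially, so no contradiction fires. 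In the other branch, knowing that the nearest points $p_0,p_n\in\mathcal{D}$ are $C(S)$--close does not by itself bound $d_Z(u_0,u_n)$: both have empty $Z$--projection, and the $C(S)$--geodesics from $u_0$ to $p_0$ and from $u_n$ to $p_n$ can be arbitrarily long, so the edge-Lipschitz estimate yields nothing uniform along them. You rightly identify in your last paragraph that linking $C(S)$--geometry near $\partial Z$ to $C(Z)$--distances is the crux; what is missing is an actual lemma doing this, and supplying one is essentially the whole content of the proofs in \cite{MM2} and \cite{WEB1}.
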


In the rest of this paper, $M$ will denote the constant given by Theorem \ref{BGIT}. Note that $M\leq 100$ \cite{WEB1}.

Now, we observe the following special property of tight geodesics. 

\begin{proposition}[\cite{W2}]\label{tightp}
Suppose $\x(S)\geq 1.$ Let $x,y \in C(S)$ and $g \in \L_{T}(x,y)$. The following holds for all $v\in g$ and $Z\subsetneq S$. If $\pi_{Z}(v)\neq \emptyset$, then $d_{Z}(x,v)\leq M \text{ or }d_{Z}(v,y)\leq M.$
\end{proposition}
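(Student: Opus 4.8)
The plan is to deduce everything from the Bounded Geodesic Image Theorem (Theorem~\ref{BGIT}), applied to the two halves of $g$ split at $v$, after a short surface-topology argument that rules out the single obstruction to doing so. Fix a tight multigeodesic $\{V_i\}_{i=0}^{n}$ between $x$ and $y$ with $x\in V_0$, $y\in V_n$, and $v_i\in V_i$ for all $i$ (Definition~\ref{d}), and write $v=v_k$. If $k\in\{0,n\}$, or $\pi_Z(x)=\emptyset$, or $\pi_Z(y)=\emptyset$, the conclusion holds vacuously, and the case $\x(S)=1$ is handled separately at the end; so assume $\x(S)\geq 2$, $0<k<n$, and $\pi_Z(x),\pi_Z(y)\neq\emptyset$. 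Put $J=\{\,i : \pi_Z(V_i)=\emptyset\,\}$; since $\emptyset\neq\pi_Z(v)\subseteq\pi_Z(V_k)$ we have $k\notin J$, and $0,n\notin J$.

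The first step is to show that $J$, viewed as a subset of $\mathbb{Z}$, has diameter at most $2$. Fix an essential non-peripheral curve $\delta$ which is a boundary component of $Z$ when $Z$ is not an annulus and the core of $Z$ when $Z$ is an annulus. If $i\in J$ then every curve of $V_i$ is disjoint from $Z$, hence has zero intersection number with $\delta$ and so lies within distance $1$ of $\delta$ in $C(S)$; since $\{V_i\}$ is a multigeodesic this forces $|i-j|\leq 2$ for all $i,j\in J$.

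The core of the argument is the claim that $J$ cannot simultaneously contain an index $<k$ and an index $>k$. Indeed, if $p<k<q$ with $p,q\in J$, the diameter bound forces $p=k-1$ and $q=k+1$, so $V_{k-1}$ and $V_{k+1}$ can both be realized disjoint from $Z$. Realizing $V_{k-1}\cup V_{k+1}$ in minimal position off $Z$, its regular neighborhood $R(V_{k-1}\cup V_{k+1})$ is disjoint from $Z$, so $Z$ sits inside a single complementary component $Y$; since $Z$ carries an essential non-peripheral curve, $Y$ is neither a disk nor a peripheral annulus, so it is not absorbed when one passes from $R(V_{k-1}\cup V_{k+1})$ to $F(V_{k-1}\cup V_{k+1})$. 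Hence $\partial F(V_{k-1}\cup V_{k+1})$, which is $V_k$ by tightness, can be realized disjoint from $Z$, giving $\pi_Z(V_k)=\emptyset$ and contradicting $k\notin J$. (This is exactly an instance of the property that any curve meeting $V_k$ must meet $V_{k-1}$ or $V_{k+1}$, applied to $\delta$, and it is the only place tightness enters; I expect it to be the main, though brief, obstacle, chiefly because of the care needed with the various conventions for when a subsurface projection is empty and with the annular versus non-annular cases.)

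Given this, either $\pi_Z(V_i)\neq\emptyset$ for all $0\leq i\leq k$ or $\pi_Z(V_i)\neq\emptyset$ for all $k\leq i\leq n$. In the first case, Theorem~\ref{BGIT} applied to the multigeodesic $\{V_0,\dots,V_k\}$ gives $d_Z(V_0,V_k)\leq M$, and since $\pi_Z(x)\subseteq\pi_Z(V_0)$ and $\pi_Z(v)\subseteq\pi_Z(V_k)$ this yields $d_Z(x,v)\leq M$; the second case symmetrically yields $d_Z(v,y)\leq M$. Finally, when $\x(S)=1$ every proper subsurface $Z$ is an annulus and the set $\{\,i : \pi_Z(v_i)=\emptyset\,\}$ consists of the at most one index $i$ with $v_i$ equal to the core of $Z$; so it cannot straddle $k$, and the same appeal to Theorem~\ref{BGIT} completes the argument with no use of tightness.
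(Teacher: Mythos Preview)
Your argument is correct and is precisely the standard route: use tightness to rule out the one configuration (both neighbours missing $Z$) that would block the Bounded Geodesic Image Theorem, and then apply Theorem~\ref{BGIT} to whichever half of the multigeodesic has all projections nonempty. The paper does not supply its own proof of Proposition~\ref{tightp} (it is quoted from \cite{W2}), but your reasoning is exactly the argument implicit in the paper's Lemma~\ref{weekend}: Case~2 there is your application of Theorem~\ref{BGIT} when $\pi_Z(v_{i-1})\neq\emptyset$ or $\pi_Z(v_{i+1})\neq\emptyset$, and your tightness step is the observation that for $v_i\in\partial F(v_{i-1}\cup v_{i+1})$ Case~1 of that lemma cannot occur, since $Z$ then lies in a non-disk, non-peripheral complementary component of $F(v_{i-1}\cup v_{i+1})$ and hence misses $\partial F$.
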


The above proposition naturally motivates the following definition by varying $M.$

\begin{definition}[\cite{W2}]\label{dokoni}
Suppose $\x(S)\geq 1.$ Let $x,y \in C(S)$ such that $d_{S}(x,y)>2$. We say a geodesic $g\in \L(x,y)$ is a $D$--weakly tight geodesic if the following holds for all $v\in g$ and $Z\subsetneq S$. If $\pi_{Z}(v)\neq \emptyset$, then $d_{Z}(x,v)\leq D \text{ or }d_{Z}(v,y)\leq D.$
\end{definition}

Clearly, tight geodesics are $M$--weakly tight geodesics. In \cite{W2}, Theorem \ref{webb's} was proved in the setting of weak tight geodesics as a corollary of Theorem \ref{br}, see Theorem \ref{weaktight}. (We will discuss the implication from Theorem \ref{br} to Theorem \ref{weaktight} in Lemma \ref{imply}.) First we define 
\begin{definition}
Let $a,b\in C(S)$, $A,B\subseteq C(S)$, and $r>0$. 
\begin{itemize}
\item Let $\L_{WT}^{D}(a,b)$ be the set of all $D$--weakly tight geodesics between $a$ and $b$. 
\item Let $\d G^{D}(a,b):=\{v\in C(S)| v\in g\in \L_{WT}^{D}(a,b)\}$, $ G^{D}(A,B):=\cup_{a\in A, b\in B} G^{D}(a,b)$, and $G^{D}(a,b;r):=G^{D}(N_{r}(a), N_{r}(b))$ where $N_{r}(a)$ and $N_{r}(b)$ is a radius $r$--ball around $a$ and $b$ respectively. 
\end{itemize}
\end{definition}

We observe 
\begin{theorem}[\cite{W2}]\label{weaktight}
\it{Suppose $\x(S)\geq1$. Let $a,b\in C(S)$, $r\geq 0$, $\delta$ be the hyperbolicity constant, $D\geq M$, and $N_{S}$ be from Theorem \ref{br}}.
\begin{enumerate}
\item \it{If $c\in g\in \L(a,b)$, then $ |G^{D}(a,b)\cap N_{\delta}(c)| \leq N_{S}(2D,3).$}

\item \it{If $c\in g\in \L(a,b)$ and $c \notin N_{r+j}(a)\cup N_{r+j}(b)$, then $|G^{D}(a,b;r)\cap N_{2\delta}(c)|\leq N_{S}(2(D+M),3).$}
\end{enumerate}
\end{theorem}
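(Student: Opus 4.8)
The plan is to deduce Theorem \ref{weaktight} from Theorem \ref{br}, following the same route by which Bowditch's and Webb's Theorem \ref{webb's} is obtained for tight geodesics; isolating this route is the purpose of Lemma \ref{imply}. The only structural property of tight geodesics used along that route is Proposition \ref{tightp}, namely that every vertex of a tight geodesic between $a$ and $b$ has all of its nonempty subsurface projections $M$-close to $\pi_Z(a)$ or to $\pi_Z(b)$; for a $D$-weakly tight geodesic the analogous statement is, by fiat, the defining condition of Definition \ref{dokoni}, with $M$ replaced by $D$. So, structurally, the argument is the tight-geodesic proof carried out with $M$ replaced by $D$, and the actual work is in tracking how the constants inflate and in handling the proper subsurfaces on which some projections vanish.

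Concretely, for part (1), assume $d_S(a,b)\ge 3$ (otherwise $G^{D}(a,b)=\emptyset$ and there is nothing to prove), fix a geodesic $h$ between $a$ and $b$ passing through $c$, and let $x',y'$ be vertices of $h$ near $c$ with $d_S(x',y')=3$. Using $\delta$-hyperbolicity (thin triangles and the stability of geodesics), every $v\in G^{D}(a,b)\cap N_{\delta}(c)$ lies in a uniformly bounded neighbourhood of the segment $[x',y']$, so that $v$ and the fixed pair $(x',y')$ sit in the configuration to which Theorem \ref{br} applies. The hypothesis of that theorem to be checked is that $\min\{d_Z(x',v),d_Z(v,y')\}$ is bounded for every proper subsurface $Z$ with $\pi_Z(v)\neq\emptyset$; starting from the inequality $\min\{d_Z(a,v),d_Z(v,b)\}\le D$ furnished by the $D$-weakly tight geodesic carrying $v$, this follows from a triangle inequality in $C(Z)$ together with the Bounded Geodesic Image Theorem applied along $h$ (to bound $d_Z(a,x')$, resp.\ $d_Z(y',b)$, by $M\le D$), which accounts for the doubling of $D$ and yields the bound $N_S(2D,3)$. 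Part (2) is the same argument with an $r$-ball of slack: now $v$ lies on a $D$-weakly tight geodesic between some $a'\in N_r(a)$ and $b'\in N_r(b)$, so its weak-tightness inequality is relative to $(a',b')$; comparing $\pi_Z$ at $a'$ with $\pi_Z$ at $a$, and at $b'$ with $\pi_Z$ at $b$, costs one further $M$ in each $C(Z)$, inflating the constant to $N_S(2(D+M),3)$, and the hypothesis $c\notin N_{r+j}(a)\cup N_{r+j}(b)$ with $j=10\delta+1$ (Remark \ref{WEBS}) is exactly what puts $[x',y']$ deep enough inside the corridor between the endpoints for the $N_{2\delta}(c)$ version of the confinement step to work.

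The step I expect to be the main obstacle is the subsurface-projection bookkeeping just alluded to: the Bounded Geodesic Image Theorem applies only to a sub-geodesic of $h$ along which $\pi_Z$ is nowhere empty, whereas for some proper $Z$ the projection vanishes at a vertex of $h$ near $c$. This is dealt with by the standard observation that the vertices of a geodesic on which a fixed $\pi_Z$ is empty are all disjoint from $Z$, hence within $C(S)$-distance $1$ of $\partial Z$, so this "empty locus" has diameter at most $2$; consequently, when it meets a neighbourhood of $c$ the curve $\partial Z$ is boundedly close to $c$ and hence to $v$, and in that case a direct estimate (rather than the Bounded Geodesic Image Theorem) gives the needed bound on $d_Z(x',v)$ or $d_Z(v,y')$. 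Pinning the final constants down to exactly $2D$ and $2(D+M)$ requires care in this case analysis; the remaining ingredients --- fellow-traveling and the confinement of $G^{D}(a,b)\cap N_{\delta}(c)$ near $[x',y']$ --- are routine hyperbolic geometry.
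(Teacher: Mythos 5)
There is a genuine gap, and it lies in how you propose to use Theorem \ref{br}. That theorem has no hypothesis of the kind you set out to verify: it is an unconditional, Ramsey--type statement (the property $\U$) saying that \emph{any} subset of $C(S)$ of size greater than $N_S(l,k)$ contains $k$ elements that are pairwise more than $l$ apart in \emph{some} subsurface $Z\subseteq S$. So the sentence ``the hypothesis of that theorem to be checked is that $\min\{d_Z(x',v),d_Z(v,y')\}$ is bounded'' does not connect your estimates to Theorem \ref{br} at all, and the step that actually produces a cardinality bound is missing. The intended deduction (this is the paper's Lemma \ref{imply}) is by contradiction and pigeonhole: if $|G^{D}(a,b)\cap N_{\delta}(c)|>N_S(2D,3)$, extract three curves and one $Z$ with pairwise $d_Z>2D$; if $Z=S$ this contradicts the diameter bound $2\delta\leq 2D$ of $N_{\delta}(c)$; if $Z\subsetneq S$, the defining inequality of $D$--weak tightness --- taken relative to the actual endpoints $a,b$, so no transfer to a nearby pair $(x',y')$ and no Bounded Geodesic Image Theorem is needed in part (1) --- places each of the three curves within $D$ of $\pi_Z(a)$ or $\pi_Z(b)$, whence two of them are within $D$ of the same endpoint and so at most $2D$ apart in $C(Z)$, a contradiction. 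In particular your accounting of the constant is wrong: the doubling in $2D$ comes from this pigeonhole, not from BGIT bounding $d_Z(a,x')$ by $M\leq D$.

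Your route could in principle be repaired --- transfer the weak-tightness inequality from $(a,b)$ to a distance-$3$ pair $(x',y')$ near $c$ at a cost of $M$ plus additive slack (handling the empty-projection locus as you describe), and then prove the finiteness of curves with uniformly bounded projections relative to $(x',y')$ --- but that last finiteness statement is itself only available here as a consequence of Theorem \ref{br} via the very pigeonhole argument above, and the detour would yield a bound of the shape $N_S(2(D+M)+O(1),3)$ rather than the asserted $N_S(2D,3)$. For part (2) your instinct is closer: the extra $M$ in $2(D+M)$ does arise from comparing, inside $C(Z)$, the primed endpoints $a'\in N_r(a)$, $b'\in N_r(b)$ carrying the weak tight geodesic through $v$, and the hypothesis $c\notin N_{r+j}(a)\cup N_{r+j}(b)$ (with $j=3\delta+2$ in this setting, not Webb's $10\delta+1$, which pertains to Theorem \ref{webb's}) is what guarantees the projections needed for that comparison are defined; but the conclusion must again be drawn by the pigeonhole application of Theorem \ref{br}, which your write-up never performs.
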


\begin{remark}
We can take $j$ to be $3\delta +2$, also our proof works in the case of $\x(S)=1$. However, in the case of tight geodesics, which is understood as $M$--weakly tight geodesics in Theorem \ref{weaktight}, our bounds are weaker than Webb's sharp bounds discussed in Remark \ref{WEBS}. Even though the bounds on Theorem \ref{weaktight}, which are directly given by Theorem \ref{br}, are not expected to be sharp, the gap between Webb's bounds and our bounds was the original motivation for the author to investigate the existence of $M$--weakly tight geodesics that are not tight. In this paper, we confirm this existence by introducing \projection geodesics. However, such existence of \projection geodesics still do not fill the gap between the bounds completely.  
\end{remark}

A special case of Theorem \ref{br} plays a key role in the study of weak tight geodesics, so we briefly discuss it here, emphasizing on its relation to uniformly locally finite graphs and its implication to Theorem \ref{weaktight}.

The following property is equivalent to graphs being uniformly locally finite. (We assume that a graph is path--connected and its diameter is infinite with the graph metric.)
\begin{definition}[\cite{W2}]\label{lfp}
Let $X$ be a graph with the graph metric $d_{X}$. We say $X$ satisfies the uniform local finiteness property if there exists a computable $N_{X}(l,k)$ for any $l>0$ and $k>1$ such that the following holds. If $A\subseteq X$ such that $|A|>N_{X}(l,k)$, then there exists $A'\subseteq A$ such that $|A'|= k$ and $d_{X}(x,y)>l \text{ for all } x,y\in A'.$
\end{definition}
We abbreviate the above property by $\U$. We observe
\begin{proposition}[\cite{W2}]\label{LK}
A graph is uniformly locally finite. $\Longleftrightarrow$ A graph satisfies $\U$.
\end{proposition}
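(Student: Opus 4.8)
The plan is to prove the two implications separately. The content here is a reformulation, so neither direction is deep: the forward direction is a greedy packing argument and the reverse is a one--line contradiction. Throughout I would use that, for a path--connected graph $X$, being uniformly locally finite is the same as having uniformly bounded vertex degrees, which in turn is equivalent to the existence of a computable $\rho\colon\mathbb{N}\to\mathbb{N}$ with $|N_{r}(v)|\le\rho(r)$ for every vertex $v$ and every $r\ge 0$ (bounded degree $d$ gives $|N_{r}(v)|\le 1+d+\cdots+d^{r}$, while conversely $\deg(v)\le|N_{1}(v)|$).

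For ($\Longrightarrow$), assuming $X$ is uniformly locally finite with such a $\rho$, I would define $N_{X}(l,k):=(k-1)\,\rho(l)$ and verify Definition~\ref{lfp} by a greedy extraction. Given $A\subseteq X$ with $|A|>N_{X}(l,k)$, suppose $a_{1},\dots,a_{j}\in A$ have been chosen pairwise at distance $>l$; then $\bigl|A\setminus\bigcup_{i\le j}N_{l}(a_{i})\bigr|\ge |A|-j\,\rho(l)$, which is positive whenever $j\le k-1$, so one may choose $a_{j+1}$ in this set, and by construction $d_{X}(a_{j+1},a_{i})>l$ for all $i\le j$. Iterating up to $j=k$ produces the required $l$--separated $A'$ with $|A'|=k$, and $N_{X}$ is computable since $\rho$ is.

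For ($\Longleftarrow$), assuming $\U$ holds, I would argue by contradiction: if $X$ is not uniformly locally finite then there are vertices $v_{n}$ with $|N_{1}(v_{n})|\to\infty$. Applying $\U$ with $l=2$ and $k=2$ to $A:=N_{1}(v_{n})$, for $n$ chosen so that $|N_{1}(v_{n})|>N_{X}(2,2)$, produces distinct $x,y\in N_{1}(v_{n})$ with $d_{X}(x,y)>2$, contradicting $d_{X}(x,y)\le d_{X}(x,v_{n})+d_{X}(v_{n},y)\le 2$. Hence $X$ is uniformly locally finite.

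The hard part will not be any single step, but keeping the bookkeeping honest: making the forward bound $N_{X}(l,k)$ explicit and computable from the ball--growth function so that it literally satisfies Definition~\ref{lfp}, and recording at the outset that the various formulations of ``uniformly locally finite'' coincide for path--connected graphs, so that the cheap choice $l=2,\ k=2$ in the reverse direction is legitimate.
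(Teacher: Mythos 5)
Your argument is correct: the greedy packing with $N_{X}(l,k)=(k-1)\rho(l)$ gives the forward implication, and the $l=2,\ k=2$ application to a large $1$--ball gives the converse, which is exactly the standard (and intended) argument for this equivalence. The present paper does not reprint a proof — Proposition~\ref{LK} is quoted from \cite{W2} — so there is nothing in the text for your proposal to diverge from, and as written it is complete.
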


The main result in \cite{W2} was to show the curve graphs satisfy $\U$ via Masur--Minsky's subsurface projections:

\begin{theorem}[\cite{W2}]\label{br}
Suppose $\x(S)\geq 1$. There exists a computable $N_{S}(l,k)$ for any $l>0$ and $k>1$ such that the following holds. If $A\subseteq C(S)$ such that $|A|>N_{S}(l,k)$, then there exists $A'\subseteq A$ and $Z\subseteq S$ such that $|A'|= k$ and $d_{Z}(x,y)>l$ for all $x,y\in A'$.
\end{theorem}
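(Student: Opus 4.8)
The plan is to prove Theorem~\ref{br} by induction on the complexity $\xi(S)$, establishing along the way the following relative version for every essential subsurface $W\subseteq S$ (with $W=S$ permitted): for all $l>0$ and $k>1$ there is a computable $N_W(l,k)$ so that every set $A$ of curves with $\pi_W(a)\neq\emptyset$ for all $a\in A$ and $|A|>N_W(l,k)$ contains a subset $A'$ with $|A'|=k$ together with a subsurface $Z\subseteq W$ such that $d_Z(x,y)>l$ for all $x,y\in A'$. Inside each complexity level I also run a secondary induction on a \emph{radius} parameter: call a set of curves a cluster of radius $r$ around $b_0$ if every member lies within $d_W$-distance $r$ of the fixed curve $b_0$. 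The opening move is a Ramsey reduction: two-colour the pairs of $A$ according to whether their $d_W$-distance exceeds $l$. If $|A|$ beats the relevant Ramsey number, then either $A$ contains $k$ curves pairwise at $d_W$-distance larger than $l$, which settles the claim with $Z=W$, or $A$ contains an arbitrarily large cluster $B$ of some radius $r\le l$ around some $b_0$. So it remains to show that a huge cluster around $b_0$ in $C(W)$ contains a $k$-element subset that is $l$-separated in $C(Z)$ for some \emph{proper} $Z\subsetneq W$.

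There are two base cases. If $\xi(W)=1$, then $W$ has no proper essential non-annular subsurface, every curve of $B$ other than $b_0$ projects non-trivially to the annulus $\mathrm{ann}(b_0)$, and the annular curve graphs are unbounded. Hence either $\pi_{\mathrm{ann}(b_0)}(B)$ is unbounded, in which case $k$ curves of $B$ are pairwise far in $C(\mathrm{ann}(b_0))$ and we take $Z=\mathrm{ann}(b_0)$, or it is bounded, in which case a curve within bounded $C(W)$-distance of $b_0$ and of bounded twisting about $b_0$ ranges over a finite set, contradicting $|B|>N_W(l,k)$ once $N_W$ is large. If the radius of $B$ is $1$, then every curve of $B$ other than $b_0$ is disjoint from $b_0$, hence lies in $C(W\setminus b_0)$; pigeonholing on the boundedly many components, a large part of $B$ lies in a single component $W'$ with $\xi(W')<\xi(W)$, and the complexity induction applied inside $C(W')$ produces $Z\subseteq W'\subsetneq W$ and the desired $A'$.

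For the inductive step, assume $\xi(W)\ge2$ and that $B$ has radius $r\ge2$ around $b_0$. For each $b\in B\setminus\{b_0\}$ fix a $C(W)$-geodesic from $b_0$ to $b$ and let $c_b$ be its first vertex after $b_0$; then $c_b$ is disjoint from $b_0$, lies in one component of $W\setminus b_0$, and $d_W(c_b,b)\le r-1$. Pigeonhole so that for a large $B_1\subseteq B$ all the $c_b$ lie in a single component $W^*$ of $W\setminus b_0$, which satisfies $\xi(W^*)<\xi(W)$. If the assignment $b\mapsto c_b$ has a fibre of size greater than $N_W(l,k)$, then this fibre is a huge cluster of radius $r-1$ consisting of genuine curves of $B$, and the secondary induction applies to it directly, with no transfer step. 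Otherwise the image $\{c_b:b\in B_1\}$ is a huge set of curves in $C(W^*)$, and the complexity induction inside $C(W^*)$ yields a subsurface $Z\subseteq W^*$ and curves $c_{b^{(1)}},\dots,c_{b^{(k)}}$ pairwise at $d_Z$-distance greater than $l+2M$. To transfer this separation to $b^{(1)},\dots,b^{(k)}$: if the short $C(W)$-geodesic from $c_{b^{(i)}}$ to $b^{(i)}$ has non-empty projection to $Z$ at every vertex, then the Bounded Geodesic Image Theorem (Theorem~\ref{BGIT}) gives $d_Z(c_{b^{(i)}},b^{(i)})\le M$, whence $d_Z(b^{(i)},b^{(j)})>l$ by the triangle inequality for $d_Z$. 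If this condition fails for some $i$, then some vertex of that geodesic misses $Z$; pigeonholing over $b\in B_1$ on which vertex misses $Z$ and on the component of $W\setminus Z$ containing it, one extracts a huge family of curves inside a subsurface of complexity strictly less than $\xi(W)$, each within uniformly bounded $C(W)$-distance of the corresponding curve of $B$, and restarts the argument there. Since complexity strictly decreases at each such restart and is bounded below, the radius strictly decreases within a level, and every Ramsey number and pigeonhole loss is explicit, the recursion terminates and assembles into a single computable $N_S(l,k)$.

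The main obstacle is exactly the transfer in the second alternative: the inductive hypothesis delivers separation for the auxiliary curves $c_b$ rather than for the original curves of $B$, and one controls $d_Z(c_b,b)$ through the Bounded Geodesic Image Theorem only when the connecting geodesic of length at most $r$ crosses $Z$ at every vertex. The technical heart of the argument is to make the failure of that crossing condition \emph{pay for itself}, i.e.\ to show that it always encodes enough additional disjointness to lower the complexity, all while keeping every constant explicit so that the nested complexity and radius inductions compose into one computable function. Everything else — the Ramsey step, the pigeonholing on components, and the analysis of the $\xi(W)=1$ case via annular projections — is comparatively routine once this mechanism is set up.
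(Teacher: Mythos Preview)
The paper does not contain a proof of Theorem~\ref{br}; the result is quoted from \cite{W2} and used here only as a black box (notably in Lemma~\ref{imply}), so there is no in-paper argument to compare your proposal against.

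Judged on its own, your outline follows the right architecture --- Ramsey reduction, induction on complexity, secondary induction on cluster radius, with Theorem~\ref{BGIT} mediating between levels --- and this is indeed broadly the shape of the argument in \cite{W2}. But there is a genuine gap in your transfer step. You apply the complexity hypothesis to $\{c_b:b\in B_1\}$ and extract \emph{exactly} $k$ curves $c_{b^{(1)}},\dots,c_{b^{(k)}}$ that are $(l+2M)$-separated in some $Z\subseteq W^*$. When the BGIT transfer fails for one of these, you propose to ``pigeonhole over $b\in B_1$''; but for a generic $b\in B_1$ you know nothing about whether the geodesic $c_b\to b$ misses $Z$, since $Z$ was manufactured from that particular $k$-tuple, and on the $k$-tuple itself there are only $k$ elements --- far too few to pigeonhole into anything ``huge''. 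The repair is to call the inductive hypothesis with a target $K=K(l,k,r,\xi(W))\gg k$ rather than $k$, obtain $K$ pairwise $(l+2M)$-far $c_{b^{(i)}}$'s in $Z$, and only then run the dichotomy on this $K$-tuple: either at least $k$ of the associated length-$\le r$ geodesics project fully to $Z$ (and BGIT plus the triangle inequality finish), or at least $K-k$ of them contain a vertex missing $Z$, and pigeonholing \emph{those} on the time index (at most $r$ values) and on the component of $W\setminus Z$ legitimately places a large subfamily of the original $b^{(i)}$'s within distance $\le r$ of a fixed proper subsurface, where the complexity drop lets you recurse. Without enlarging $k$ to $K$ before the dichotomy, the restart you describe does not inherit enough curves to continue, and the argument as written does not close.
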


Now we briefly explain how a special case of Theorem \ref{br}, i.e. when $k=3$, implies the first statement of Theorem \ref{weaktight}. (The implication to the second statement requires more work, but it is straightforward after observing the implication to the first.) 
\begin{lemma}\label{imply}
Theorem \ref{br} implies Theorem \ref{weaktight}.
\end{lemma}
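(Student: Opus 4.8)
The plan is to establish statement (1) of Theorem~\ref{weaktight} in full detail and then indicate the extra bookkeeping needed for statement (2); as the paper signals, only the case $k=3$ of Theorem~\ref{br} is used, and the reason for taking \emph{three} vertices is that two of them will then be forced onto the ``same side''.

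\emph{Statement (1).} Fix $a,b\in C(S)$, a vertex $c\in g\in\L(a,b)$, and $D\geq M$; when $d_S(a,b)\leq 2$ the set $\L^{D}_{WT}(a,b)$ is empty and there is nothing to prove, so assume $d_S(a,b)>2$. Arguing by contradiction, suppose $A:=G^{D}(a,b)\cap N_{\delta}(c)$ satisfies $|A|>N_{S}(2D,3)$. I would apply Theorem~\ref{br} with $l=2D$ and $k=3$ to $A$, producing three distinct vertices $x_1,x_2,x_3\in A$ and a subsurface $Z\subseteq S$ with $d_Z(x_i,x_j)>2D$ for all $i\neq j$. The option $Z=S$ is easily excluded, since the three vertices all lie within $\delta$ of $c$ and so cannot be pairwise more than $2\delta$ apart in $C(S)$; thus $Z\subsetneq S$, and since each $x_i$ has $d_Z(x_i,x_j)>0$ for some other index, $\pi_Z(x_i)\neq\emptyset$ for all three indices.

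Next, each $x_i$ lies on some $g_i\in\L^{D}_{WT}(a,b)$, so Definition~\ref{dokoni} applied to the pair $(x_i,Z)$ yields, for each $i$, that $d_Z(a,x_i)\leq D$ or $d_Z(x_i,b)\leq D$. By pigeonhole two of the three indices, say $i$ and $j$, satisfy the inequality with a common endpoint, say $a$, so that $d_Z(a,x_i)\leq D$ and $d_Z(a,x_j)\leq D$; the triangle inequality in the graph $C(Z)$ then gives $d_Z(x_i,x_j)\leq d_Z(x_i,a)+d_Z(a,x_j)\leq 2D$, contradicting $d_Z(x_i,x_j)>2D$. This proves (1).

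\emph{Statement (2).} I would run the same argument on $A:=G^{D}(a,b;r)\cap N_{2\delta}(c)$ with threshold $l=2(D+M)$ and $k=3$, using the hypothesis $c\notin N_{r+j}(a)\cup N_{r+j}(b)$ with $j=3\delta+2$. The only change is that $x_i$ now lies on a $D$-weakly tight geodesic between some $a_i\in N_r(a)$ and $b_i\in N_r(b)$, so Definition~\ref{dokoni} bounds only $d_Z(a_i,x_i)$ or $d_Z(x_i,b_i)$ by $D$. To replace $a_i,b_i$ by $a,b$ I would invoke the Bounded Geodesic Image Theorem~\ref{BGIT}: the role of the hypothesis on $c$ is that $x_i$, being $2\delta$-close to $c$ and hence far enough from $a$ and $b$, forces $\pi_Z$ to be nonempty along a geodesic from $a$ to $a_i$ and along one from $b$ to $b_i$, whence $d_Z(a,a_i)\leq M$ and $d_Z(b,b_i)\leq M$, and therefore $d_Z(a,x_i)\leq D+M$ or $d_Z(x_i,b)\leq D+M$. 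Pigeonhole and the triangle inequality then produce two indices with $d_Z(x_i,x_j)\leq 2(D+M)$, a contradiction.

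The pigeonhole step and the triangle inequalities are routine; the part I expect to be the main obstacle is the non-vanishing verification in (2) --- checking, from the specific value of $j$ and the $\delta$-hyperbolicity of $C(S)$, that $\pi_Z$ indeed does not vanish along the geodesics joining $a$ to $a_i$ and $b$ to $b_i$, so that Theorem~\ref{BGIT} applies and contributes only the additive error $M$. A minor secondary point is the verification that the subsurface delivered by Theorem~\ref{br} may be taken to be a proper subsurface of $S$, which is handled above via the diameter bound inside $N_{\delta}(c)$.
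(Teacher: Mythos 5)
Your argument is essentially the paper's own proof: for statement (1) the paper likewise applies Theorem \ref{br} with $k=3$ and $l=2D$, rules out $Z=S$ by the diameter bound $2\delta\leq 2D$ on $G^{D}(a,b)\cap N_{\delta}(c)$, and uses the definition of $D$--weak tightness plus pigeonhole to forbid three pairwise $2D$--separated projections to a common proper subsurface. For statement (2) the paper gives no details either (it declares the extension straightforward), and your sketch — threshold $2(D+M)$, with Theorem \ref{BGIT} transferring the bound from $a_i,b_i\in N_r(a),N_r(b)$ to $a,b$ using the hypothesis $c\notin N_{r+j}(a)\cup N_{r+j}(b)$ — is the intended route, so your proposal matches the paper's proof in both substance and level of rigor.
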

\begin{proof}
We know, by the definition of weak tight geodesics, that any element in $G^{D}(a,b)\cap N_{\delta}(c)$ is $D$--close to $x$ or $y$ after projecting to a subsurface. Therefore, we never have $3$ elements in $G^{D}(a,b)\cap N_{\delta}(c)$ that project to a same subsurface where their projection distances are mutually large (more than $2D$). Clearly, the diameter of $G^{D}(a,b)\cap N_{\delta}(c)$ is bounded by $2\delta\leq 2D$ in $C(S)$. We are done by Theorem \ref{br} when $k=3$.
\end{proof}

\section{Results}\label{OUTLINE}
First, we note that 
\begin{lemma}
Every geodesic is a $D$--weakly tight geodesic for some $D$. 
\end{lemma}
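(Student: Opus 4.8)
The plan is to fix a geodesic $g \in \L(x,y)$ with $d_S(x,y) > 2$ and produce a single constant $D = D(g)$ — allowed to depend on $g$, since the statement only asks for existence — such that the defining inequality of Definition \ref{dokoni} holds for every $v \in g$ and every proper subsurface $Z \subsetneq S$. The key observation is that a geodesic $g$ is a finite set of vertices $\{v_0, \dots, v_n\}$ (here $n = d_S(x,y)$), so we only have finitely many vertices $v$ to worry about; the only genuinely infinite quantifier is over the subsurfaces $Z$. So the real content is: for a \emph{fixed} vertex $v$, the quantity $\min\{d_Z(x,v),\, d_Z(v,y)\}$ is bounded over all $Z \subsetneq S$ with $\pi_Z(v) \neq \emptyset$.

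First I would reduce to controlling, for a fixed $v$, the set of subsurfaces on which $v$ has large projection distance from both $x$ and $y$. If $\pi_Z(v) \neq \emptyset$ and both $d_Z(x,v)$ and $d_Z(v,y)$ exceed $M$ (the Bounded Geodesic Image constant), then in particular $\pi_Z(x) \neq \emptyset$ and $\pi_Z(y) \neq \emptyset$ — otherwise one of those projection distances would be undefined, not large — and then by the triangle inequality $d_Z(x,y) \ge d_Z(x,v) + d_Z(v,y) - 2\,\mathrm{diam}(\pi_Z(v)) > 2M - C$ for the uniform diameter bound $C$ on $\pi_Z(v)$ of a single curve. Hence $v$ can only have large double-sided projection distance on subsurfaces $Z$ for which $d_Z(x,y)$ is itself large. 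The standard finiteness statement for subsurface projections (Behrstock's inequality / the fact that $\sum_{Z} [d_Z(x,y)]_K$ is finite for any threshold $K$, which follows from the distance formula, or more elementarily the fact that only finitely many subsurfaces $Z$ satisfy $d_Z(x,y) > M$) then tells us there are only \emph{finitely many} such $Z$. On each of these finitely many $Z$, $\min\{d_Z(x,v), d_Z(v,y)\}$ is a finite number; take the maximum over this finite list, then the maximum over the finitely many $v \in g$, and finally set $D$ to be $M$ plus that maximum. By construction every $v \in g$ and every $Z \subsetneq S$ with $\pi_Z(v)\neq\emptyset$ satisfies $d_Z(x,v) \le D$ or $d_Z(v,y) \le D$.

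The main obstacle — really the only point that needs care — is justifying that only finitely many proper subsurfaces $Z$ have $d_Z(x,y) > M$, and more precisely that this finite set is the correct "support" for all the double-sided bad subsurfaces of all the $v_i$. I would handle this by invoking the Bounded Geodesic Image Theorem (Theorem \ref{BGIT}) in contrapositive form: if $d_Z(x,y) > M$ then there is some $v_i \in g$ with $\pi_Z(v_i) = \emptyset$, i.e. $v_i \subseteq Z$ or $v_i$ is disjoint from $Z$ in a way that makes the projection empty — in any case $\partial Z$ has bounded intersection with the geodesic, and the set of such $Z$ is finite because $g$ has finitely many vertices and each vertex is contained in or disjoint from only finitely many isotopy classes of essential subsurfaces that are "cut out" by bounded-complexity data. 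Alternatively, and more cleanly, one can quote the Masur--Minsky distance formula or simply the finiteness of $\{Z : d_Z(x,y) > K\}$ for any $K$, which is a well-known consequence of \cite{MM2}. With that finiteness in hand the rest is the trivial "maximum over a finite set" argument sketched above, and one should remark that $D$ genuinely depends on $x$, $y$, and $g$ — this is why the statement is phrased with "for some $D$" rather than with a uniform constant, in contrast with Definition \ref{dokoni}'s intended use.
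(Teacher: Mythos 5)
There is a genuine gap, and it sits exactly at the step you flag as the heart of the argument. The inequality $d_{Z}(x,y)\geq d_{Z}(x,v)+d_{Z}(v,y)-2\,\mathrm{diam}(\pi_{Z}(v))$ is not the triangle inequality: the triangle inequality gives an \emph{upper} bound $d_{Z}(x,v)\leq d_{Z}(x,y)+d_{Z}(y,v)+O(1)$, i.e.\ a lower bound on $d_{Z}(x,y)$ only in terms of the \emph{difference} $|d_{Z}(x,v)-d_{Z}(v,y)|$, never in terms of the sum. Consequently the claim ``$v$ can only have large double-sided projection distance on subsurfaces $Z$ for which $d_{Z}(x,y)$ is itself large'' is false, and it is false in precisely the situation this paper is about: in the construction of Proposition \ref{prop}, a vertex $v_{i}$ of a geodesic is replaced by a curve in a complementary component $Z$ of $F(v_{i-1}\cup v_{i+1})$ chosen so that $d_{Z}(x,v_{i})$ and $d_{Z}(v_{i},y)$ are both enormous, while $d_{Z}(x,y)$ stays whatever bounded value it had to begin with. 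Nothing forces $\pi_{Z}(v)$ to lie between $\pi_{Z}(x)$ and $\pi_{Z}(y)$ when $Z$ misses the neighbors $v_{i\pm1}$; that is exactly why non-tight geodesics need arbitrarily large $D$. So your finite set $\{Z: d_{Z}(x,y)>M\}$ does not contain all the double-sided bad subsurfaces, and the final maximum does not bound them. (The later heuristic that each vertex is ``contained in or disjoint from only finitely many subsurfaces'' is also not true as stated, though the finiteness of $\{Z: d_{Z}(x,y)>K\}$ itself is a standard fact.)

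The repair is simple and turns your argument into essentially the paper's one-line proof: apply the finiteness statement to the pairs $(x,v)$ rather than to $(x,y)$. For each of the finitely many vertices $v\in g$, the set $\{Z\subsetneq S: d_{Z}(x,v)>3\}$ is finite (this is the fact, cited from \cite{MM2} and \cite{BBF}, that a fixed pair of curves can have large projection distance to only finitely many subsurfaces), and every $Z$ with $\pi_{Z}(v)\neq\emptyset$ and $\min\{d_{Z}(x,v),d_{Z}(v,y)\}>3$ lies in it. Taking the maximum of $\min\{d_{Z}(x,v),d_{Z}(v,y)\}$ over this finite set, then over the finitely many $v$, yields $D(g)$. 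Your overall architecture (finitely many vertices, then finitely many bad subsurfaces, then a maximum) is the right one; only the identification of the finite family of bad subsurfaces needs to be made for each pair $(x,v)$ instead of for $(x,y)$.
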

\begin{proof}
The statement follows from the fact that a pair of curves can project largely to only finitely many subsurfaces, see \cite{BBF} and \cite{MM2}.
\end{proof}

Now, we start with the following proposition, which directly follows from Definition \ref{dokoni} combining with Proposition \ref{tightp}.

\begin{proposition}[\cite{W2}]\label{Miyake2}
Let $a,b \in C(S)$. Let $n,m\in \mathbb{N}$ such that $n\leq m$. We have
\begin{itemize}
\item $\L_{WT}^{n}(a,b) \subseteq   \L_{WT}^{m}(a,b) .$
\item If $\x(S)=1,$ then $\L_{T}(a,b) = \L_{WT}^{M}(a,b) = \L(a,b).$
\item If $\x(S)>1,$ then $\L_{T}(a,b) \subseteq    \L_{WT}^{M}(a,b) \subseteq  \L(a,b).$
\end{itemize}
\end{proposition}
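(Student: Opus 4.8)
The plan is to verify each of the three bullet points directly from the definitions, treating them as essentially bookkeeping consequences of Definition \ref{dokoni} and Proposition \ref{tightp}.

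\textbf{First bullet ($\L_{WT}^{n}(a,b)\subseteq\L_{WT}^{m}(a,b)$ for $n\le m$).} Let $g\in\L_{WT}^{n}(a,b)$. By Definition \ref{dokoni}, for every $v\in g$ and every proper subsurface $Z\subsetneq S$ with $\pi_{Z}(v)\neq\emptyset$ we have $d_{Z}(a,v)\le n$ or $d_{Z}(v,b)\le n$. Since $n\le m$, the same disjunction holds with $n$ replaced by $m$, so $g\in\L_{WT}^{m}(a,b)$. This is immediate and needs no surface topology. (One should note the standing hypothesis $d_S(a,b)>2$ built into Definition \ref{dokoni}; it is harmless to carry it along throughout.)

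\textbf{Second bullet ($\x(S)=1$).} When $\x(S)=1$, the only proper subsurfaces $Z\subsetneq S$ that admit nonempty projection are annuli (there are no lower-complexity non-annular subsurfaces), and by Definition \ref{d} every geodesic is tight; hence $\L_{T}(a,b)=\L(a,b)$. So it suffices to show $\L(a,b)\subseteq\L_{WT}^{M}(a,b)$, i.e.\ that an arbitrary geodesic is $M$-weakly tight: but every geodesic is tight, so Proposition \ref{tightp} applies verbatim and gives exactly the defining inequality of Definition \ref{dokoni} with $D=M$. Combined with the first bullet's trivial inclusion $\L_{WT}^{M}(a,b)\subseteq\L(a,b)$ (any $D$-weakly tight geodesic is in particular a geodesic), we get the chain of equalities $\L_{T}(a,b)=\L_{WT}^{M}(a,b)=\L(a,b)$.

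\textbf{Third bullet ($\x(S)>1$).} The inclusion $\L_{WT}^{M}(a,b)\subseteq\L(a,b)$ is again immediate from Definition \ref{dokoni} (a $D$-weakly tight geodesic is by definition an element of $\L(a,b)$). For $\L_{T}(a,b)\subseteq\L_{WT}^{M}(a,b)$: take $g\in\L_{T}(a,b)$; Proposition \ref{tightp} says precisely that for all $v\in g$ and $Z\subsetneq S$ with $\pi_{Z}(v)\neq\emptyset$, either $d_{Z}(a,v)\le M$ or $d_{Z}(v,b)\le M$, which is the condition in Definition \ref{dokoni} with $D=M$; hence $g\in\L_{WT}^{M}(a,b)$. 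There is no real obstacle here --- the statement is a direct transcription of the cited results --- the only things to be careful about are (i) keeping the roles of $x,y$ in Proposition \ref{tightp} aligned with $a,b$, and (ii) recording that in the $\x(S)=1$ case the first inclusion $\L_T\subseteq\L_{WT}^M$ is actually an equality because there all geodesics coincide with tight geodesics, which forces the whole triple to collapse.
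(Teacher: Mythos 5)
Your proof is correct and follows exactly the route the paper takes: the paper simply observes that the proposition "directly follows from Definition \ref{dokoni} combining with Proposition \ref{tightp}," which is precisely your bullet-by-bullet verification (monotonicity in $D$ from the definition, tightness of all geodesics when $\x(S)=1$, and Proposition \ref{tightp} giving $\L_{T}\subseteq\L_{WT}^{M}$). Your remark about the standing hypothesis $d_{S}(a,b)>2$ in Definition \ref{dokoni} is a reasonable bookkeeping point that the paper also leaves implicit.
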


As we can see in Proposition \ref{Miyake2}, weak tight geodesics in the case of $\x(S)=1$ should not be considered because there is no ``gap''. Therefore, for the rest of this paper, we consider the case when $\x(S)>1$. See Figure \ref{Figk}.

\begin{figure}[h]
\centering
  \includegraphics[width=60mm]{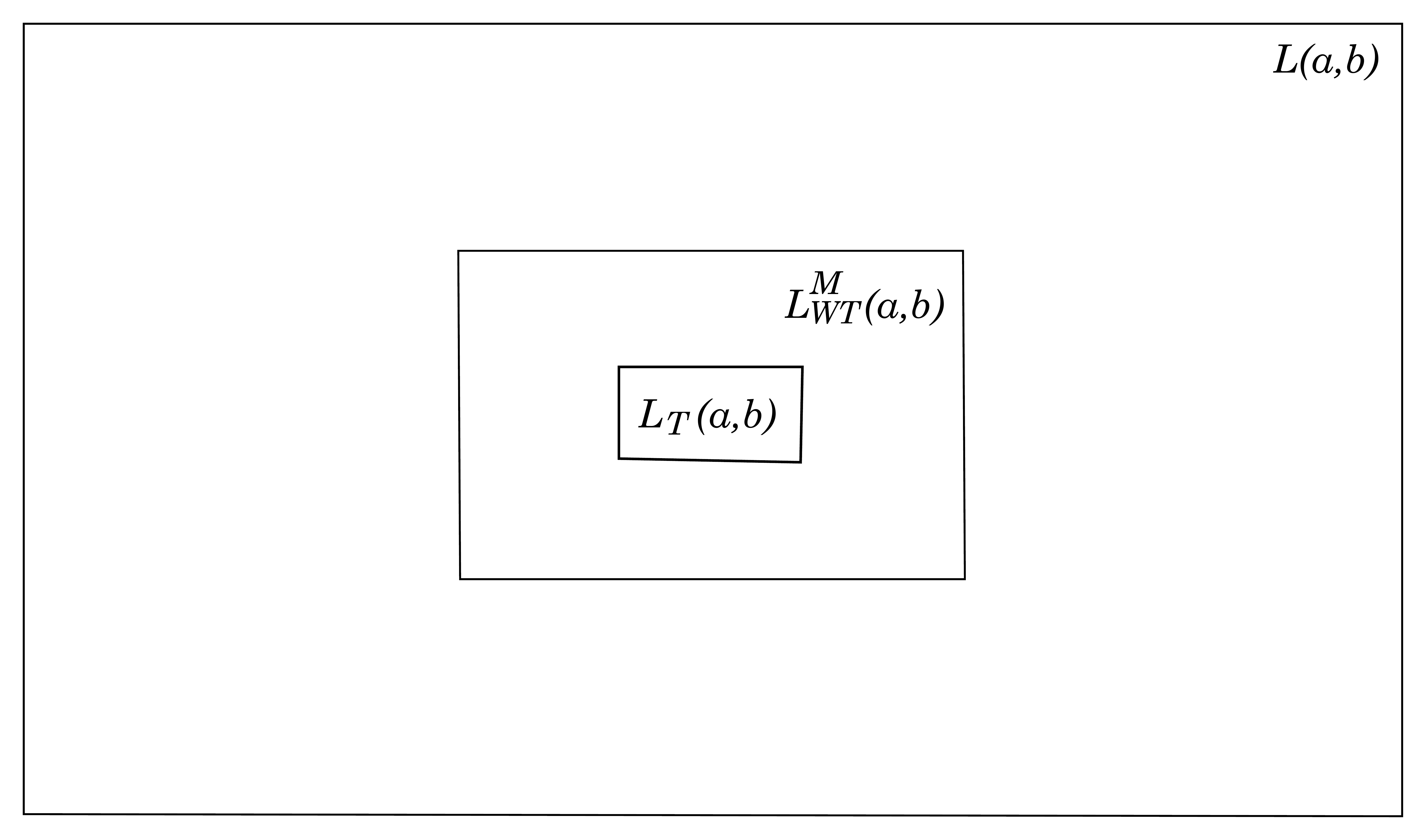}
 \caption{$\L_{T}(a,b) \subseteq    \L_{WT}^{M}(a,b) \subseteq  \L(a,b).$}
 \label{Figk}
\end{figure}

This paper provides constructions of $D$--weakly tight geodesics for a computable $D$ where the constructions are canonical, meaning the constructions apply to any pair of curves. General methods are to start with a geodesic between a given pair of curves and to replace vertices on the geodesic by new vertices which are $D$--close to an endpoint of the geodesic under subsurface projections. However, replacement makes sense only under certain situations. More precisely, we observe 


\begin{proposition}\label{badpair}
Let $x,y\in C(S)$. Let $g=\{v_{i}\}_{i=0}^{d_{S}(x,y)}\in \L(x,y)$ such that the following holds for all $i$: there does not exist a complementary component of $F(v_{i-1}\cup v_{i+1})$ whose complexity is bigger than $0$. Then $g\in \L_{T}(x,y)$. 
\end{proposition}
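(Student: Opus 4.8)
The plan is to show that the geodesic $g$ is in fact a \emph{tight multigeodesic} (with each multicurve a single curve), i.e. that $v_{i} = \partial(F(v_{i-1}\cup v_{i+1}))$ for every $i$ with $0 < i < d_{S}(x,y)$. Since $g$ already consists of single vertices, once we know this identity it follows immediately from Definition \ref{d} that $g \in \L_{T}(x,y)$.

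First I would fix an interior index $i$ and set $W = F(v_{i-1}\cup v_{i+1})$. Since $\{v_{i-1}, v_{i}, v_{i+1}\}$ is a geodesic segment, $d_{S}(v_{i-1}, v_{i+1}) = 2$, so $v_{i-1}$ and $v_{i+1}$ are disjoint from a common curve; in particular the regular neighborhood $R(v_{i-1}\cup v_{i+1})$ is a proper essential subsurface, and $\partial W$ is a nonempty multicurve disjoint from both $v_{i-1}$ and $v_{i+1}$, hence at distance $1$ from each. Next I would argue that any curve realizing distance $1$ from both $v_{i-1}$ and $v_{i+1}$ (in particular $v_{i}$) must lie in $W$: a curve disjoint from $v_{i-1}\cup v_{i+1}$ is disjoint from $R(v_{i-1}\cup v_{i+1})$ up to isotopy, hence either lies in the complement of $W$ or is isotopic to a component of $\partial W$; but a curve in a complementary component of $W$ that is not a boundary-parallel curve is a genuine curve in a complementary piece of complexity $\geq 1$ (here is where I need non-peripheral, non-trivial curves to exist in that piece), which the hypothesis forbids. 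So $v_{i}$ is either a component of $\partial W$ or a curve in $W$.

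The heart of the argument is then to upgrade ``$v_{i}$ is a curve in $W$, disjoint from $\partial W$'' to ``$v_{i} = \partial W$,'' i.e.\ to rule out $W$ itself having positive complexity with $v_{i}$ a nonperipheral curve inside it. This is exactly the content of the hypothesis applied with the roles shifted: the hypothesis about $F(v_{i-1}\cup v_{i+1})$ having no complementary component of complexity $> 0$ is what forces $W$ to be an annular neighborhood of a multicurve (union of annuli and pairs of pants, with $\xi(W)=0$), so that $\partial W$ is precisely the set of essential curves contained in $W$. Combining this with the previous paragraph, $v_{i}$, being a curve disjoint from $v_{i-1}\cup v_{i+1}$ and not hidden in a complementary piece, must be isotopic to a component of $\partial W$. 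Finally I would show $\partial W$ is connected (a single curve): if $\partial(F(v_{i-1}\cup v_{i+1}))$ had two distinct components $c_1, c_2$, they would both be distance $1$ from $v_{i-1}$ and from $v_{i+1}$, and disjoint from each other; inserting $c_1$ or $c_2$ shows $v_i$ can be taken to be either, but more to the point, geodesity together with the filling condition should pin down $\partial W$ uniquely — I would lean on Lemma \ref{construction} or a direct count of $\xi(W)$ from $\xi(S)$ and the no-positive-complementary-complexity condition to conclude $\partial W$ is a single curve equal to $v_i$.

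The main obstacle I anticipate is the bookkeeping in the step that converts ``no complementary component of positive complexity'' into ``$\partial(F(v_{i-1}\cup v_{i+1}))$ is a single curve that every competing $v_i$ must equal'' — in particular ruling out the degenerate possibilities where $W$ or its complement consists of pants and annuli in a way that still leaves room for a distinct geodesic choice at time $i$. Handling $S_{0,4}$ and $S_{1,1}$-type complexity-one pieces correctly (where ``disjoint'' is replaced by ``intersect minimally'') and being careful about boundary-parallel curves in $S$ itself are the fiddly points; but conceptually the proof is just unwinding the definition of $F(\cdot)$ and of tightness, so I expect no deep difficulty, only care with the surface-topology casework.
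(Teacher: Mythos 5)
Your reduction aims at the wrong target, and the step where you try to land it is where the argument genuinely fails. First, the intermediate claim that the hypothesis forces $W=F(v_{i-1}\cup v_{i+1})$ to satisfy $\x(W)=0$ is backwards: $v_{i-1}$ and $v_{i+1}$ are at distance $2$, so they intersect and fill $W$, whence $\x(W)\geq 1$; the hypothesis constrains the \emph{complementary} components of $W$, not $W$ itself. The correct (and easy) consequence of the hypothesis is only that $v_{i}$, being essential, non--peripheral and disjoint from $v_{i-1}\cup v_{i+1}$, is isotopic to a component of $\partial(W)$: it cannot be essential in $W$ because $v_{i-1}\cup v_{i+1}$ fill $W$, and it cannot sit essentially in a complementary piece because those pieces are annuli and pants. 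Second, and fatally, your plan to upgrade this to $v_{i}=\partial(F(v_{i-1}\cup v_{i+1}))$, i.e. to show that $g$ itself is a tight multigeodesic of singletons, cannot work: nothing in the hypothesis makes $\partial(F(v_{i-1}\cup v_{i+1}))$ connected. For instance, $F(v_{i-1}\cup v_{i+1})$ can be a four--holed sphere in a closed genus two surface whose complement is two disjoint annuli; then $\partial(F(v_{i-1}\cup v_{i+1}))$ is a two--component multicurve and $v_{i}$ is only one of its components, yet the hypothesis is satisfied. Your proposed rescues (``lean on Lemma \ref{construction} or a count of $\x(W)$'') do not address this; Lemma \ref{construction} concerns preservation of tightness under consecutive tightening and says nothing about connectedness of the boundary.

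What Definition \ref{d} actually requires is a tight multigeodesic $\{V_{i}\}$ with $v_{i}\in V_{i}$, which is weaker than $V_{i}=\{v_{i}\}$, and producing it is the real content you have omitted: if one naively sets $V_{i}=\partial(F(v_{i-1}\cup v_{i+1}))$, the tightness condition must be verified against the neighboring multicurves, $V_{i}=\partial(F(V_{i-1}\cup V_{i+1}))$, and this is not automatic. The natural repair is to run the Masur--Minsky tightening pass of Lemma \ref{construction} on $g$ from left to right, setting $W_{1}=\partial(F(v_{0}\cup v_{2}))$ and $W_{i}=\partial(F(W_{i-1}\cup v_{i+1}))$, which yields a tight multigeodesic; the role of your hypothesis is then to show that each original vertex survives the pass. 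Indeed $v_{i}$ is disjoint from $W_{i-1}\cup v_{i+1}$, and since $v_{i-1}\in W_{i-1}$ inductively, $F(W_{i-1}\cup v_{i+1})$ contains $F(v_{i-1}\cup v_{i+1})$ up to isotopy, so its complementary components lie inside those of $F(v_{i-1}\cup v_{i+1})$ and still have complexity at most $0$; this forces $v_{i}$ to be a component of $W_{i}$, and hence $g\in\L_{T}(x,y)$. Without an argument of this kind, establishing only that each $v_{i}$ is boundary--parallel in the complement of the filled surface does not prove the proposition.
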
 


Hence, 
\begin{corollary}\label{badpair2}
Let $x,y\in C(S)$. If $\{\L(x,y) \setminus \L_{T}(x,y) \}\neq \emptyset$ then there exists a geodesic $g=\{v_{i}\}_{i=0}^{d_{S}(x,y)}\in \L(x,y)$ such that the following holds for some $i$: there exists a complementary component of $F(v_{i-1}\cup v_{i+1})$ whose complexity is bigger than $0$.
\end{corollary}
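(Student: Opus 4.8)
The plan is to obtain Corollary \ref{badpair2} as the contrapositive of Proposition \ref{badpair}: all of the substance lies in the proposition, and the corollary is a purely logical rearrangement once one unwinds the set-difference hypothesis.

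First I would extract from the hypothesis $\L(x,y)\setminus\L_{T}(x,y)\neq\emptyset$ a concrete witness, namely a geodesic $g=\{v_{i}\}_{i=0}^{d_{S}(x,y)}\in\L(x,y)$ with $g\notin\L_{T}(x,y)$. I would record in passing that this already forces $d_{S}(x,y)\geq 2$, since when $d_{S}(x,y)\leq 1$ there are no interior vertices and every geodesic is vacuously tight, so in that case the hypothesis would be empty; consequently $g$ has at least one interior index $i$ (with $1\leq i\leq d_{S}(x,y)-1$), and $F(v_{i-1}\cup v_{i+1})$ is well defined.

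Next I would apply Proposition \ref{badpair} to this $g$ in contrapositive form. Suppose toward a contradiction that for \emph{every} interior index $i$ there is no complementary component of $F(v_{i-1}\cup v_{i+1})$ of complexity bigger than $0$; this is exactly the hypothesis of Proposition \ref{badpair}, whose conclusion then yields $g\in\L_{T}(x,y)$, contradicting the choice of $g$. Hence there is some index $i$ for which $F(v_{i-1}\cup v_{i+1})$ has a complementary component of complexity bigger than $0$, and the witnessing geodesic demanded by the statement is simply $g$ itself.

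I do not anticipate a genuine obstacle here; the only point that needs a moment's attention is the bookkeeping on the index range (interior indices, and the degenerate small-distance cases), which is settled by the observation that a non-tight geodesic can exist only once $d_{S}(x,y)\geq 2$. The real content — that having all complementary components of $F(v_{i-1}\cup v_{i+1})$ of complexity at most $0$ at every step already forces $g$ to be tight — is carried by Proposition \ref{badpair}, and this corollary inherits it verbatim.
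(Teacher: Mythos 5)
Your proposal is correct and is exactly how the paper obtains the corollary: the paper gives no separate argument (it just writes ``Hence''), treating the statement as the contrapositive of Proposition \ref{badpair} applied to any witness $g\in\L(x,y)\setminus\L_{T}(x,y)$, which is precisely your argument. The small bookkeeping remark about $d_{S}(x,y)\geq 2$ and interior indices is harmless and consistent with the paper's conventions.
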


If a pair of curves has a non--tight geodesic between them, then the constructions of weak tight geodesics are meaningful: 

\begin{proposition}\label{prop}
Let $x,y\in C(S)$ such that $d_{S}(x,y)>2$ and that $\{ \L(x,y) \setminus \L_{T}(x,y)\}\neq \emptyset$. For all $n\in \mathbb{N}$, $ \{ \L(x,y)\setminus \L_{WT}^{n}(x,y)\}\neq \emptyset$.
\end{proposition}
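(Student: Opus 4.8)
The plan is, for each $n\in\mathbb{N}$, to build a geodesic from $x$ to $y$ that carries a vertex $v$ and admits a proper subsurface $Z\subsetneq S$ with $\pi_{Z}(v)\neq\emptyset$, $d_{Z}(x,v)>n$ and $d_{Z}(v,y)>n$; by Definition \ref{dokoni} any such geodesic lies in $\L(x,y)\setminus\L_{WT}^{n}(x,y)$, which is all we need. Since $\{\L(x,y)\setminus\L_{T}(x,y)\}\neq\emptyset$, Corollary \ref{badpair2} supplies a geodesic $g=\{v_{i}\}_{i=0}^{d}$ with $v_{0}=x$, $v_{d}=y$, $d=d_{S}(x,y)\geq 3$, together with an index $i$ such that $F(v_{i-1}\cup v_{i+1})$ has a complementary component $W$ with $\xi(W)\geq 1$. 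Every curve lying in the interior of $W$ is disjoint from $v_{i-1}$ and from $v_{i+1}$ (both of which lie in $F(v_{i-1}\cup v_{i+1})$) and is distinct from them, so replacing $v_{i}$ by any such curve is again a geodesic from $x$ to $y$, since $d_{S}(v_{i-1},v_{i+1})=2$ and the rest of $g$ is untouched. Hence we may assume $v_{i}\in W$ from the start.

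Main case: $2\leq i\leq d-2$. Here $\pi_{W}(x)\neq\emptyset$ and $\pi_{W}(y)\neq\emptyset$, for if $x$ were disjoint from $W$ it would be disjoint from $v_{i}\in W$, forcing $d_{S}(x,v_{i})=i\leq 1$, and symmetrically for $y$. Because $\xi(W)\geq 1$ the curve graph $C(W)$ has infinite diameter, whereas $\pi_{W}(x)$ and $\pi_{W}(y)$ are finite sets; so we may choose a curve $v_{i}'\in C(W)$ with $d_{W}(v_{i}',a)>n$ for every $a\in\pi_{W}(x)\cup\pi_{W}(y)$. Replacing $v_{i}$ by $v_{i}'$ gives a geodesic $g'$ from $x$ to $y$ with $\pi_{W}(v_{i}')=\{v_{i}'\}\neq\emptyset$, $d_{W}(x,v_{i}')>n$ and $d_{W}(v_{i}',y)>n$; therefore $g'\notin\L_{WT}^{n}(x,y)$, as required.

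Remaining case, which I expect to be the main obstacle: $i\in\{1,d-1\}$, say $i=1$. Then $W$ is a complementary component of $F(x\cup v_{2})$ and is automatically disjoint from $x$, so $x$ has empty projection to $W$ and to every subsurface of $W$; consequently no modification carried out inside $W$ is ever witnessed by $x$, and the argument of the main case breaks down. The plan here is to migrate the source of non-tightness to an index with $2\leq i\leq d-2$ (which needs $d\geq 4$): after replacing $v_{1}$ by a suitable $v_{1}'\subseteq W$ one tries to arrange that $F(v_{1}'\cup v_{3})$ acquires a positive-complexity complementary component, re-chooses $v_{2}$ inside it, and iterates; the decisive difficulty is to keep the iterated complementary components inside a subsurface of $S$ still crossed by both $x$ and $y$, which is exactly what fails naively since $x$ never crosses $W$. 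The case $d=3$ has no interior index at all, and I expect the hypothesis $\{\L(x,y)\setminus\L_{T}(x,y)\}\neq\emptyset$ not to be met there — every geodesic of length $3$ being tight after fattening the endpoint multicurves in Masur--Minsky's construction (Lemma \ref{construction}) — so that what the argument ultimately requires is a refinement of Corollary \ref{badpair2} locating the positive-complexity complementary component at an index $i$ with $2\leq i\leq d-2$; proving that refinement is the hard part.
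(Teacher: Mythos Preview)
Your ``main case'' ($2\le i\le d-2$) is exactly the paper's proof: use Corollary~\ref{badpair2} to locate a complementary component $Z$ with $\xi(Z)\ge 1$, pick $\gamma\in C(Z)$ far in $C(Z)$ from both $\pi_Z(x)$ and $\pi_Z(y)$, and replace $v_i$ by $\gamma$. The paper does no more than this single step and does not split into cases on $i$.

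The obstacle you perceive at $i\in\{1,d-1\}$ is not real, and stems from a misreading of the convention. When $\pi_Z(x)=\emptyset$, the inequality $d_Z(x,v)\le D$ is taken to \emph{fail}, not to hold vacuously: $v$ cannot be close to the projection of $x$ if $x$ has no projection. This reading is explicit in Corollary~\ref{sincl2}, where the author records $\pi_W(x)=\emptyset$ and nevertheless concludes that the modified geodesic is ``not $n$--weakly tight'' at that vertex. Under this convention the same one--line argument covers the boundary case: since $d_S(y,\gamma)=d-i\ge 2$ one still has $\pi_Z(y)\ne\emptyset$, so choose $\gamma$ with $d_Z(\gamma,y)>n$ and the other inequality is automatic. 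No migration argument is needed. Your closing speculation --- that $\{\L(x,y)\setminus\L_{T}(x,y)\}=\emptyset$ whenever $d_S(x,y)=3$ --- is also wrong: Example~\ref{lox} exhibits exactly such a distance--$3$ pair, and Lemma~\ref{construction} only asserts that a given multigeodesic can be tightened, not that every length--$3$ geodesic is already tight.
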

\begin{proof}
By Corollary \ref{badpair2}, there exists $g=\{v_{i}\}_{i=0}^{d_{S}(x,y)}\in \L(x,y)$ such that the following holds for some $i$: there exists a complementary component of $F(v_{i-1}\cup v_{i+1})$ whose complexity is bigger than $0$. Let $Z$ denote the component. In $C(Z)$, we can pick  a curve which is more than $n$--away from both $\pi_{Z}(x)$ and $\pi_{Z}(y)$; replacing $v_{i}$ by the curve yields a desired geodesic that lives in $\{ \L(x,y)\setminus \L_{WT}^{n}(x,y)\}$. 
\end{proof}

In this paper, we study the following questions, some of which are dual to Proposition \ref{prop} in some sense and some of which were asked in \cite{W2}. 
\begin{question}\label{ns}
Q1) \& Q3) regard constructions and Q2) \& Q4) regard gaps. Let $a,b \in C(S)$. 
\begin{itemize}
\item Q1) In Proposition \ref{Miyake2}, we observed that $\L_{T}(a,b) \subseteq    \L_{WT}^{M}(a,b)$, but is there any other classes of geodesics which live in $\L_{WT}^{M}(a,b)$ with canonical constructions? 

In $\S \ref{PPRR}$, we introduce \Projection geodesics which live in $\L_{WT}^{M}(a,b)$. 

\item Q2) Furthermore, $\{\L_{WT}^{M}(a,b)\setminus \L_{T}(a,b)\}\neq \emptyset$?  

This question will be answered in $\S \ref{bpt}$ when we show that \Projection geodesics are different from tight geodesics. While tight geodesics are contained in \Projection geodesics, there are many \Projection geodesics that are not tight. 

\item Q3) As an extension of Q1), is there other classes of geodesic which live in $\L_{WT}^{D}(x,y)$ for $D>M$ with canonical constructions?

We study this question in the end of $\S \ref{PPRR}$ and Appendix \ref{PPRR2} by extending the construction used to study Q1). 

\item Q4) In Proposition \ref{Miyake2}, we observed that $\L_{WT}^{n}(a,b) \subseteq  \L_{WT}^{m}(a,b)$ for all $n,m\in \mathbb{N}$ such that $n< m$, but how narrow can we take a pair $n,m$ to be so that we can still guarantee that $\{\L_{WT}^{m}(a,b) \setminus \L_{WT}^{n}(a,b)\}\neq \emptyset$? 

We study this question in Appendix \ref{NARROW}. First, we point out some difficulties in studying Q4):
denote the curve picked in the proof of Proposition \ref{prop} for the replacement of $v_{i}$ by $\gamma$. In fact we could have picked $\gamma$ so that $d_{Z}(x,\gamma)>n$ and $d_{Z}(\gamma, y)>n$, but $d_{Z}(x,\gamma)\leq m$ or $d_{Z}(\gamma, y)\leq m$ for all $m$ such that $m>n$. However, a new geodesic with the replacement by $\gamma$ may not be $m$--weakly tight at time $i$--point since there could exist a subsurface $W\subsetneq Z$ such that $d_{W}(x,\gamma)>m$ and $d_{W}(\gamma, y)>m$. 
Nevertheless, the replacement works effectively if $Z$ is a complementary component of $F(v_{i-1}\cup v_{i+1})$ when $i=1$ or $i=d_{S}(x,y)-1$. Specifically focusing on this case, in Appendix \ref{NARROW}, we will discuss the construction of weak tight geodesics which live in $\{\L_{WT}^{m}(x,y) \setminus \L_{WT}^{n}(x,y)\}$ for all $n,m\in \mathbb{N}_{\geq M}$ such that $m-n>2$.
\end{itemize}
\end{question}


We outline the plan of this paper. 

In $\S \ref{PPRR}$, we introduce \projection geodesics, denoted $\mathcal{L}_{\EE}(a,b)$, and prove they are contained in the class of $(M+2)$--weakly tight geodesics. We note that a special subclass of \projection geodesics, denoted $\mathcal{L}_{\PP}(a,b)$, are contained in the class of $M$--weakly tight geodesics. For simplicity, we discuss the construction of $\mathcal{L}_{\PP}(a,b)$ here. Roughly speaking, starting with a geodesic, we replace the vertices of the geodesic by the following method. Fix a vertex of the geodesic, and replace the vertex by the curves obtained by the following two procedures:
taking the boundary components of the subsurface filled by two curves that are adjacent to the fixed vertex, and projecting the endpoints of the geodesic to the complementary components of the above subsurface.
See Definition \ref{projection}. Note that the first procedure is exactly tightening procedure, but we get more curves by the second procedure, that is, projecting procedure. 
The key point is that the curves obtained by this method will be $M$--close to an endpoint of the given geodesic under any subsurface projections, see Lemma \ref{weekend}. We iterate this replacement process to every vertex of the given geodesic, so that we have a $M$--weakly tight geodesic at the final stage, see Example \ref{EEXX}. Since we get curves for the replacement by tightening and by projecting, we call this geodesic a \emph{\projection geodesic.} An actual definition of \projection geodesics, $\mathcal{L}_{\EE}(a,b)$, is a generalization of $\mathcal{L}_{\PP}(a,b)$; instead of using only the endpoints of the given geodesic for projecting procedure, we use all curves contained in the balls of certain radii centered at the endpoints.

It is straightforward to see that \projection geodesics contain tight geodesics because of tightening procedure. Moreover, at each replacement, we generally get more curves because of projecting procedure, hence it is natural to believe that we generally get \projection geodesics that are not tight by doing both procedures on the given geodesic, yet those geodesics could arise as tight geodesics by doing only tightening procedure on a different geodesic connecting the same given endpoints. 
Therefore, in $\S\ref{bpt}$, in order to confirm that \projection geodesics are different from tight geodesics, we provide examples of a pair of curves where there are \projection geodesics between them that are not tight, see Example \ref{lox} and Example \ref{lox3}. Such geodesics live in the gap between $M$--weakly tight geodesics and tight geodesics. See Figure \ref{Figk1}.
\begin{figure}[h]
\centering
  \includegraphics[width=40mm]{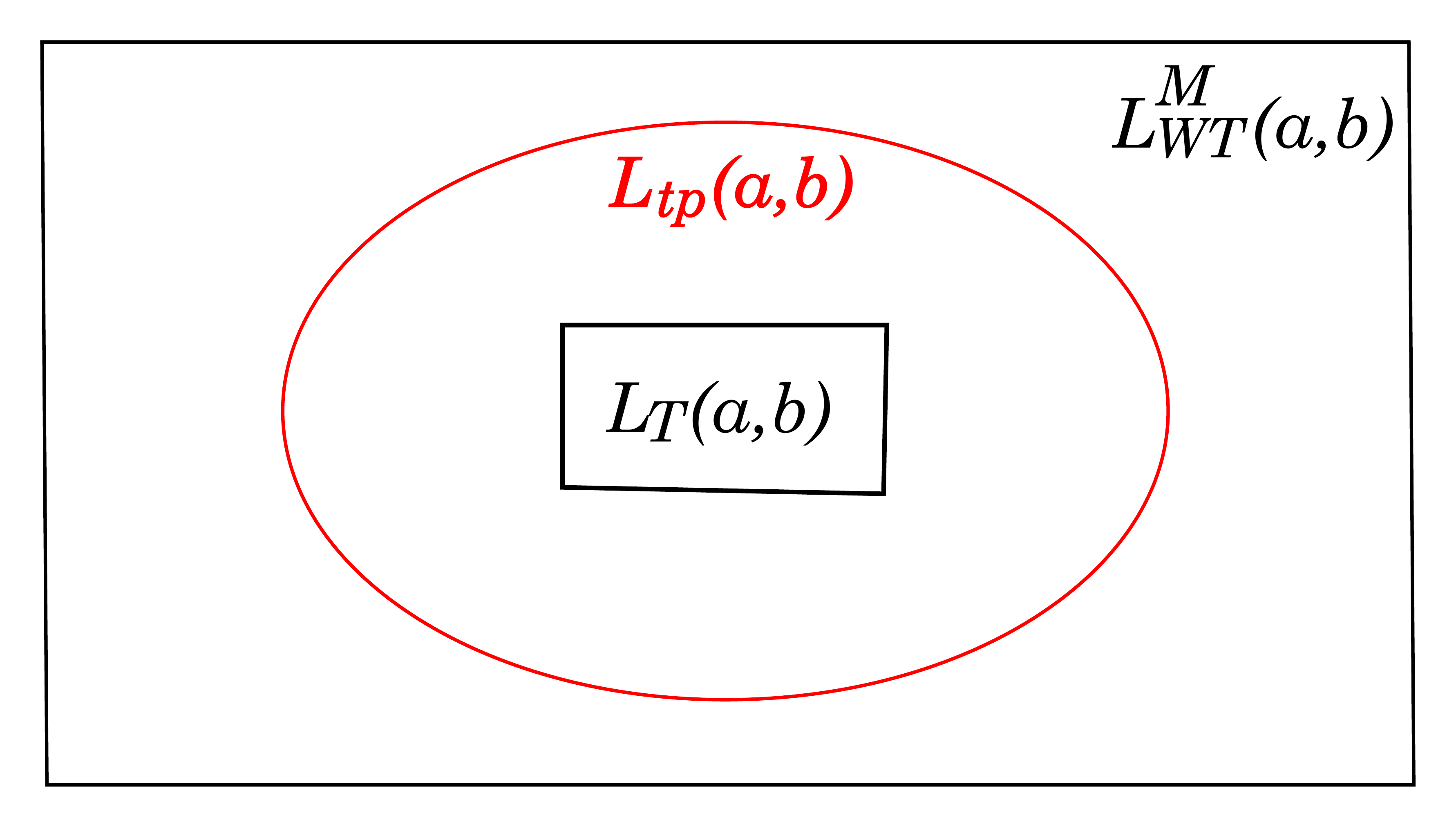}
 \caption{$\mathcal{L}_{T}(a,b)\subseteq \mathcal{L}_{\PP}(a,b) \subseteq \mathcal{L}_{WT}^{M}(a,b)$.}
 \label{Figk1}
\end{figure}
Example \ref{lox} and Example \ref{lox3} discuss pairs of curves of distance $3$, so the difference between tight geodesics and \projection geodesics may not be very visible. For this, we find sequences of pairs of curves whose distances go to infinity where tight geodesics and \projection geodesics between them behave very differently in the limit, see Lemma \ref{hc1} and Lemma \ref{hc2}.  

In Appendix \ref{PPRR2}, we give other examples of weak tight geodesics; by using the curves obtained for the replacement procedures discussed in $\S \ref{PPRR}$ as base curves, we add more curves that intersect every base curve at most $n$ times, and use them for replacement. Naturally, geodesics with this replacement procedure will live in a larger class of weak tight geodesics, see Corollary \ref{stbk}.

In Appendix \ref{NARROW}, we focus on special pairs of curves where complementary components with high complexity discussed in Corollary \ref{badpair2} occur at time $1$--point or time $(d_{S}(x,y)-1)$--point. In this case, we construct a geodesic which lives in a narrow gap between two classes of weak tight geodesics, see Corollary \ref{sincl2}.

\section{\Projection geodesics}\label{PPRR}
The goal of this section is to introduce \emph{\projection geodesics}. 
The existence of \projection geodesics will easily follow from the definition. 

First we define the following.
\begin{definition}\label{projection}
Suppose $A\subseteq S$. We let $A^{c}$ denote the set of complementary components of $A$ in $S$. Furthermore, we let $A^{nac}$ denote the set of non--annular complementary components of $A$ in $S.$ We define $\Proj_{A}:C(S)\longrightarrow C(S),$ where $$\Proj_{A}(x):=\bigg(\bigcup_{Y\in A^{nac}} \pi_{Y}(x) \bigg) \bigcup \partial(A).$$ See Figure \ref{Fig1}.
We note that $\Proj_{A}(x)$ are contained in $A^{c}$, that $\Proj_{A}(x)\neq \emptyset$ as far as $A\subsetneq S$, and that $\{\Proj_{A}(x) \setminus  \partial(A)\} \neq \emptyset$ as far as $x$ essentially intersects with a non--pants component of $A^{nac}$. Let $C\subseteq C(S)$. We define $\Proj_{A}(C):=\cup_{x\in C}\Proj_{A}(x).$ 
\begin{figure}[h]
\centering
  \includegraphics[width=100mm]{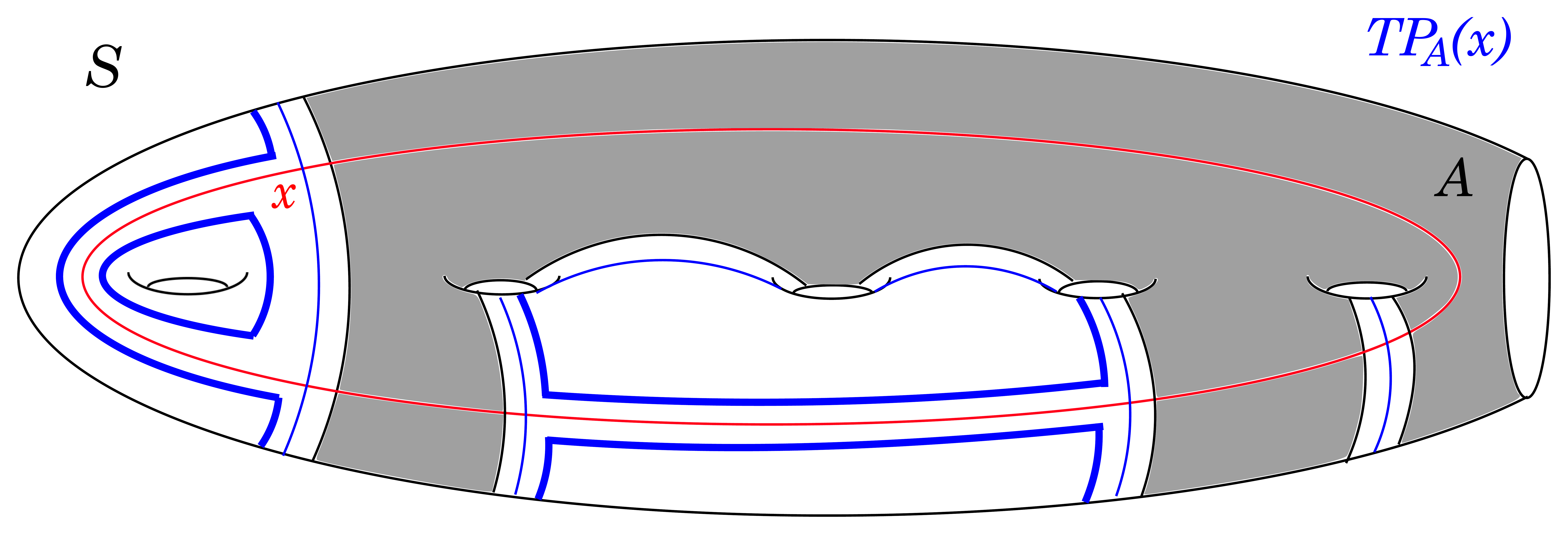}
 \caption{$\Proj_{A}(x)$ when $A$ is the shaded region. Bold blue curves and regular blue curves in $A^{c}$ are the elements of $\Proj_{A}(x)$ arising from  projections and from $\partial(A)$ respectively.}
 \label{Fig1}
\end{figure}

\end{definition}

The following lemma is the key. 

\begin{lemma}\label{weekend}
Let $x,y\in C(S)$ such that $d_{S}(x,y)>2$. Let $\{ x=v_{0}, v_{1}, \cdots ,v_{d_{S}(x,y)}=y\}$ be a multigeodesic connecting $x$ and $y$. The following holds for all $i$:  
\begin{itemize}
\item Replacing $v_{i}$ by any multicurve which is a subset of $\Proj_{F(v_{i-1}\cup v_{i+1})}(x \cup y)$ gives a new (possibly same) multigeodesic.
\item If $\c \in  \Proj_{F(v_{i-1}\cup v_{i+1})}(x \cup y)$ and $\pi_{Z}(\gamma)\neq \emptyset$ where $Z\subsetneq S$, then $d_{Z}(x,\gamma)\leq M \text{ or }d_{Z}(\gamma,y)\leq M.$
\end{itemize}
\end{lemma}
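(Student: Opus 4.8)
The plan is to treat the two bullet points separately, since the first is essentially a restatement of Masur--Minsky's tightening/projection observation (Lemma \ref{construction}) and the second is where the bounded geodesic image theorem does the work.

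For the first bullet, I would argue that every element of $\Proj_{F(v_{i-1}\cup v_{i+1})}(x\cup y)$, and hence any multicurve contained in it, is disjoint from both $v_{i-1}$ and $v_{i+1}$: elements coming from $\partial(F(v_{i-1}\cup v_{i+1}))$ are disjoint from $v_{i-1}\cup v_{i+1}$ by construction of $F$, and elements arising as $\pi_Y(x)$ or $\pi_Y(y)$ for a non-annular complementary component $Y\in F(v_{i-1}\cup v_{i+1})^{nac}$ live inside $Y$, which is disjoint from $v_{i-1}\cup v_{i+1}$ (its boundary is part of $\partial F$, and $v_{i\pm1}$ sit on the other side). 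Since $v_{i-1}$ and $v_{i+1}$ are at distance $2$ in $C(S)$, any curve disjoint from both lies at distance exactly $1$ from each, i.e. on a geodesic from $v_{i-1}$ to $v_{i+1}$; replacing $v_i$ by such a multicurve therefore preserves the property that consecutive multicurves are at the right distances and that $d_S(a,b)=|p-q|$ for vertices in positions $p,q$ (the only positions affected are $i-1,i,i+1$, and those distances are unchanged). This gives a new multigeodesic, possibly equal to the old one.

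For the second bullet, fix $\gamma\in \Proj_{F(v_{i-1}\cup v_{i+1})}(x\cup y)$ with $\pi_Z(\gamma)\neq\emptyset$, write $F:=F(v_{i-1}\cup v_{i+1})$, and split into cases according to the position of $Z$ relative to $F$. If $Z$ is nested in a complementary component $Y\in F^{nac}$, or $Z=Y$ itself, then I would apply the bounded geodesic image theorem (Theorem \ref{BGIT}) to a multigeodesic from $x$ to $v_{i-1}$ (respectively from $v_{i+1}$ to $y$), together with the fact that $\gamma$ projects into $Y$ and hence $d_Z(\gamma,v_{i-1})$ or $d_Z(\gamma, v_{i+1})$ is small; combining with another application of BGIT along the subgeodesic from $v_{i-1}$ to $x$, or noticing $v_{i-1}$ (resp.\ $v_{i+1}$) is disjoint from $Z$ so misses it entirely, one gets $d_Z(x,\gamma)\le M$ or $d_Z(\gamma,y)\le M$ up to controlling the constant. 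If instead $Z$ overlaps $\partial F$ or contains part of $F$, then $v_{i-1}$ or $v_{i+1}$ already projects to $Z$ and sits within bounded distance of $\gamma$ (both being essentially $\partial F$-related), and BGIT applied to the half of the original multigeodesic from $x$ to $v_{i-1}$ (or $v_{i+1}$ to $y$), which all projects to $Z$, gives the bound; this is exactly the mechanism behind Proposition \ref{tightp}. The one genuinely new point compared to $\partial F$ is handling the projected curves $\pi_Y(x)$: here one uses that $\pi_Y(x)$ and $\pi_Z(x)$ agree up to bounded distance whenever $Z\subseteq Y$ (a standard Lipschitz property of subsurface projection), so $d_Z(x,\gamma)$ is controlled directly by $x$'s own projection.

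The main obstacle I anticipate is the bookkeeping of constants in the second bullet — making sure every case really closes with the bound $M$ rather than some larger multiple like $2M$ or $M+2$. The statement claims the sharp constant $M$, so the delicate point is to route each application through a single invocation of the bounded geodesic image theorem on an honest multigeodesic whose every vertex projects to $Z$, rather than chaining two estimates; the cases where $Z$ straddles $\partial F$ are where one must be careful that it is not $\gamma$ that forces an extra $M$, but rather that $\gamma$ coincides with, or is disjoint from, the relevant $v_{i\pm1}$. I would organize the proof around exhibiting, for each $\gamma$ and each $Z$, an explicit sub-multigeodesic of $\{v_j\}$ (one of the two halves, possibly extended to include $\gamma$ as an endpoint) all of whose vertices meet $Z$, and then quote Theorem \ref{BGIT} once.
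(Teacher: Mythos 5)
Your first bullet and your treatment of the case where $Z$ meets $v_{i-1}$ or $v_{i+1}$ match the paper's proof: any $\gamma\in\Proj_{F(v_{i-1}\cup v_{i+1})}(x\cup y)$ is disjoint from $v_{i-1}\cup v_{i+1}$, hence can occupy position $i$ of a multigeodesic, and the half ending (resp.\ beginning) at $\gamma$ has every vertex projecting nontrivially to $Z$ (the far vertices because they fill $S$ with $v_{i+1}$, resp.\ $v_{i-1}$), so a single application of Theorem \ref{BGIT} to that extended half gives the clean bound $M$ with no chaining --- this is exactly the paper's Case 2, and your closing remark about appending $\gamma$ as an endpoint is the right mechanism there.

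The gap is in your ``nested'' case, $Z\subseteq W\in (F(v_{i-1}\cup v_{i+1}))^{nac}$. There $\pi_{Z}(v_{i-1})=\pi_{Z}(v_{i+1})=\emptyset$, so the quantity $d_{Z}(\gamma,v_{i\pm1})$ you propose to use is not defined, Theorem \ref{BGIT} cannot be applied to any segment through $v_{i-1}$ or $v_{i+1}$ (its hypothesis fails at that vertex), and your organizing principle --- one BGIT application to a sub-multigeodesic all of whose vertices meet $Z$, possibly with $\gamma$ appended --- is unavailable, because the vertex adjacent to $\gamma$ always misses $Z$. In this case the paper uses no BGIT at all: after ruling out $\gamma\in\partial(F(v_{i-1}\cup v_{i+1}))$ (such a curve has empty $Z$--projection in this configuration), one has $\gamma\in\pi_{W}(x)$ or $\gamma\in\pi_{W}(y)$, and the entire content is the composition estimate you mention only in passing, namely $d_{Z}(x,\gamma)\leq 2\leq M$. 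For non-annular $Z$ this is immediate from $\pi_{Z}(\pi_{W}(x))=\pi_{Z}(x)$, but for annular $Z$ that identity fails, and the paper's main effort in this lemma is the substitute argument: lift $x$, the arc $a\in\{x\cap W\}$ with $\gamma\in\pi_{W}(a)$, and $\partial(W)$ to the annular cover corresponding to $Z$, and use $i(\gamma,a)=i(\gamma,\partial(W))=0$ to conclude $d_{Z}(\gamma,x)\leq 2$. Your appeal to ``a standard Lipschitz property whenever $Z\subseteq Y$'' skips precisely this annular step; supply it (any absolute constant at most $M$ suffices), and note that in this case the constant bookkeeping you worry about is a non-issue, since the bound is $2$ rather than anything involving $M$.
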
 
\begin{proof}
The first statement is obvious. For the second statement, we assume $\c \in  \Proj_{F(v_{i-1}\cup v_{i+1})}(x)$. We show $$d_{Z}(x,\gamma)\leq M.$$ (If $\c \in  \Proj_{F(v_{i-1}\cup v_{i+1})}(y)$ then $d_{Z}(\gamma,y)\leq M.$)

Case 1: Suppose $\pi_{Z}(v_{i-1})= \emptyset$ and $\pi_{Z}(v_{i+1})= \emptyset$. Then $Z\subseteq W \in (F(v_{i-1}\cup v_{i+1}))^{c}$, in fact $Z \subseteq W \in (F(v_{i-1}\cup v_{i+1}))^{nac}$ since $\pi_{Z}(\c) \neq \emptyset$. Also, we observe $\c \notin \partial(W) \subseteq \partial ((F(v_{i-1}\cup v_{i+1})))$ since $\pi_{Z}(v_{i-1})= \emptyset$ and $\pi_{Z}(v_{i+1})= \emptyset$. Therefore, $$\gamma \in \pi_{W}(x).$$

If $Z$ is not an annulus, then, by the definition of subsurface projections, we have $$\pi_{Z}(\pi_{W}(x))=\pi_{Z}(x).$$ Since $\gamma \in \pi_{W}(x)$, we have $$\pi_{Z}(\c)\subseteq \pi_{Z}(\pi_{W}(x)) = \pi_{Z}(x) \Longrightarrow d_{Z}(x,\c)\leq 2.$$ 

If $Z$ is an annulus, then we do not have $\pi_{Z}(\pi_{W}(x))=\pi_{Z}(x)$ in general. However, we show that $$d_{Z}(\gamma, x)\leq 2.$$ Let $A^{Z}$ denote the annular cover of $S$ which corresponds to $Z$. Let $a \in \{x \cap W\}$ be an arc in $W$ such that $\gamma \in \pi_{W}(a)\subseteq \pi_{W}(x)$. Note that $\pi_{Z}(\partial(W))=\emptyset$ since $i(\partial(Z),\partial(W))=0$, in other words, $\partial(W)$ lifts to arcs whose endpoints are contained in a single boundary component of $A^{Z}.$
Since $a \in \{x \cap W\}$, the lift of $a$ is contained in some of the lift of $x$, say $x'$. Furthermore, the endpoints of the lift of $a$ are contained in the lift of $\partial(W)$ in $A^{Z}$. See Figure \ref{annulus}. 
Now, since $i(\gamma, a)=i(\gamma, \partial(W))=0$, neither the lift of $a$ nor the lift of $\partial(W)$ intersects with the lift of $\gamma$ in $A^{Z}$; we have $$d_{Z}(\gamma, x')\leq 1.$$ Lastly, since the diameter of $\pi_{Z}(x)$ is bounded by $1$ in $C(Z)$, we have $$d_{Z}(\gamma, x)\leq 2.$$
\begin{figure}[h]
\centering
  \includegraphics[width=50mm]{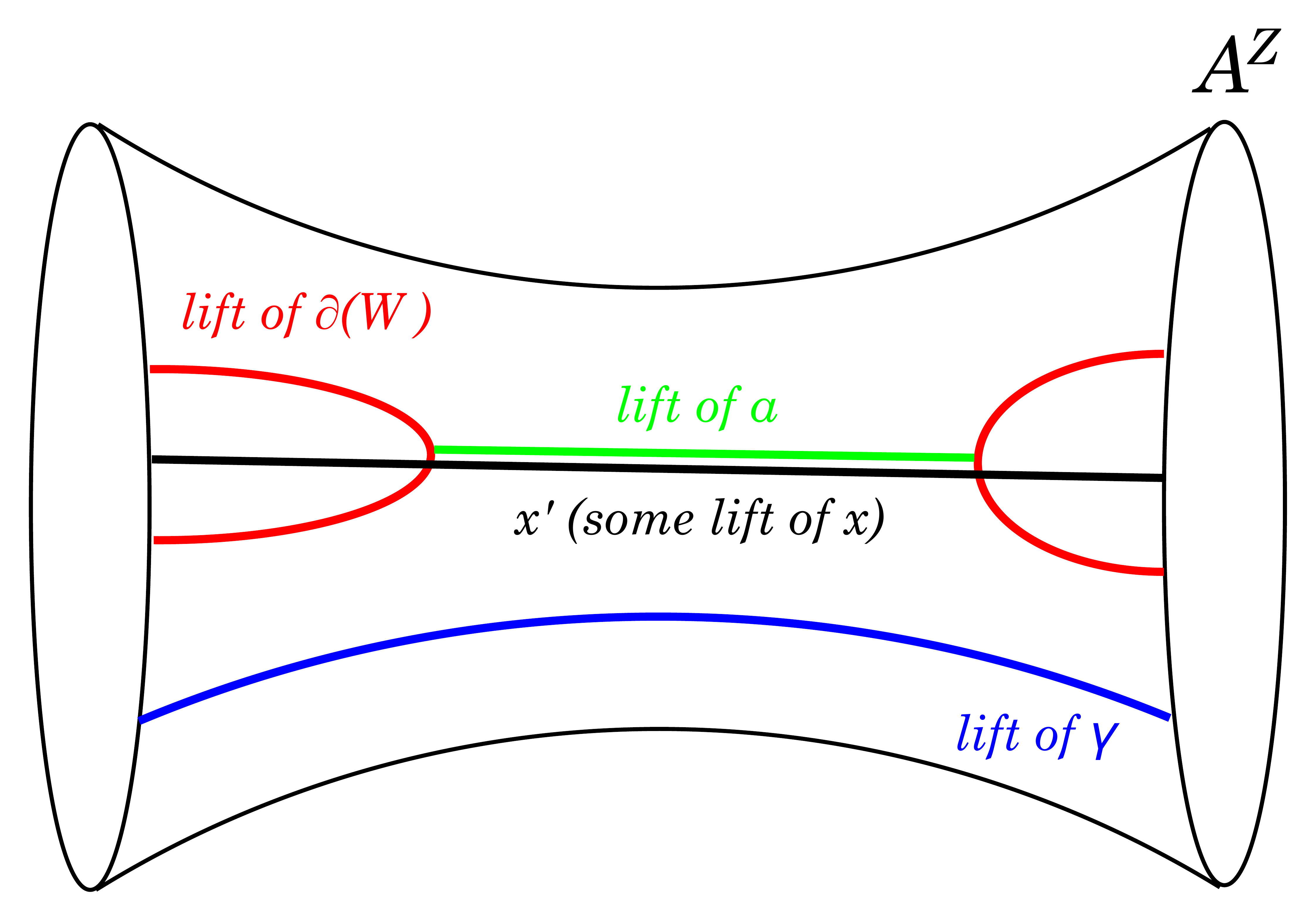}
 \caption{$d_{Z}(\gamma, x')\leq 1.$}
 \label{annulus}
\end{figure}

Case 2: Suppose $\pi_{Z}(v_{i-1})\neq \emptyset$ or $\pi_{Z}(v_{i+1})\neq \emptyset$. If $\pi_{Z}(v_{i+1})= \emptyset$, then we have $\pi_{Z}(v_{k})\neq \emptyset$ for all $k<i-1$ since $v_{k}$ and $v_{i+1}$ fill $S$. Therefore, by Theorem \ref{BGIT}, we have $$d_{Z}(x,\c)\leq  M.$$ 

An analogous argument applies to the case $\pi_{Z}(v_{i-1})= \emptyset$, and also to the case $\pi_{Z}(v_{i-1})\neq  \emptyset$ and $\pi_{Z}(v_{i+1})\neq  \emptyset$. 
\end{proof}

To get a flavor of the definition of \projection geodesics, we first observe the following basic construction of \projection geodesics. Note that it will be a $M$--weakly tight.

\begin{example}\label{EEXX}
Let $x,y\in C(S)$ such that $d_{S}(x,y)>2$. Let $\{x=v_{0}, v_{1}, \cdots ,v_{d_{S}(x,y)}=y\}$ be a multigeodesic connecting $x$ and $y$. First, we replace $v_{1}$ by a multicurve, denoted $v_{1}'$, which arises from $ \Proj_{F(v_{0}\cup v_{2})}(x \cup y)$ and obtain a new multigeodesic. On this new multigeodesic, $\{x=v_{0}, v_{1}', \cdots ,v_{d_{S}(x,y)}=y\}$, we replace $v_{2}$ by a multicurve, denoted $v_{2}'$, which arises from $\Proj_{F(v_{1}' \cup v_{3})}(x \cup y)$. We iterate this replacement procedure consecutively on $i$ until we replace $v_{d_{S}(x,y)-1}$. By Lemma \ref{weekend}, the final multigeodesic is $M$--weakly tight.
\end{example}

We observe, in Example \ref{EEXX}, $\Proj_{F(v_{0}\cup v_{2})}(x \cup y)$ generally contains elements that arise from projections, and these may not span a simplex in $C(S)$. This is different from the case of tight geodesics where we always get a simplex arising from the boundary components of subsurfaces. For a general construction of \projection geodesics, it would be more convenient to work with the notion of \emph{thick geodesics} where we loosen up the definition of multigeodesics given in Definition \ref{d} by not requiring $v_{i}$ to be a simplex thereof. Clearly, geodesics and multigeodesics are thick geodesics. Let $x,y\in C(S)$. We let $\L_{thick}(x,y)$ denote the set of all thick geodesics between $x$ and $y.$ 


It is straightforward to see that Lemma \ref{weekend} holds in the setting of thick geodesics. 
Now, we generalize Lemma \ref{weekend}; while we only used the endpoints of a given thick geodesic, we make use of much more curves around the endpoints. Recall that $N_{r}(c)$ denotes the radius $r$--ball centered at $c\in C(S)$.
\begin{lemma}\label{weaken}
Let $x,y\in C(S)$ such that $d_{S}(x,y)>2$. Let $\{ x=v_{0}, v_{1}, \cdots ,v_{d_{S}(x,y)}=y\}$ be a thick geodesic connecting $x$ and $y$. The following holds for all $i$:  
\begin{itemize}
\item Replacing $v_{i}$ by any subset of $\Proj_{F(v_{i-1}\cup v_{i+1})} \big( N_{i-1}(x) \cup N_{d_{S}(x,y)-i-1}(y) \big) $ gives a new (possibly same) thick geodesic.
\item If $\c \in  \Proj_{F(v_{i-1}\cup v_{i+1})} \big( N_{i-1}(x) \cup N_{d_{S}(x,y)-i-1}(y) \big) $ and $\pi_{Z}(\gamma)\neq \emptyset$ where $Z\subsetneq S$, then $d_{Z}(x,\gamma)\leq M+2 \text{ or }d_{Z}(\gamma,y)\leq M+2.$
\end{itemize}
\end{lemma}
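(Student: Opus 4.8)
The plan is to adapt the proof of Lemma \ref{weekend} essentially verbatim, paying for the enlargement of the source sets by a constant of $2$. The first bullet is again immediate: if $\gamma$ lies in $\Proj_{F(v_{i-1}\cup v_{i+1})}\big(N_{i-1}(x)\cup N_{d_S(x,y)-i-1}(y)\big)$, then $\gamma$ is disjoint from $\partial(F(v_{i-1}\cup v_{i+1}))$ and hence from $v_{i-1}$ and $v_{i+1}$, so $d_S(v_{i-1},\gamma)=d_S(\gamma,v_{i+1})=1$; since $\{v_j\}$ is a thick geodesic and $\gamma$ fills its complementary component with $\partial(F(v_{i-1}\cup v_{i+1}))$, replacing $v_i$ by $\gamma$ (or any subset of the projection set) keeps $d_S(x,\gamma)=i$ and $d_S(\gamma,y)=d_S(x,y)-i$, i.e. produces a new thick geodesic.

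For the second bullet, assume without loss of generality that $\gamma\in\Proj_{F(v_{i-1}\cup v_{i+1})}(w)$ for some $w\in N_{i-1}(x)$; we will show $d_Z(x,\gamma)\leq M+2$. Apply Lemma \ref{weekend} (which, as the excerpt notes, holds for thick geodesics) with $w$ in place of the endpoint: the pair $\{v_{i-1},v_{i+1}\}$ still fills $S$, so $\Proj_{F(v_{i-1}\cup v_{i+1})}(w)$ is defined, and running the two cases of that proof with $w$ instead of $x$ yields $d_Z(w,\gamma)\leq M$ whenever $\pi_Z(\gamma)\neq\emptyset$. (Concretely: in Case~1, $\pi_Z(v_{i-1})=\pi_Z(v_{i+1})=\emptyset$ forces $Z\subseteq W\in (F(v_{i-1}\cup v_{i+1}))^{nac}$ and $\gamma\in\pi_W(w)$, giving $d_Z(w,\gamma)\leq 2$ exactly as before, the annular subcase included; in Case~2, some $v_k$ has $\pi_Z(v_k)\neq\emptyset$ for all sufficiently small or large $k$, and Theorem \ref{BGIT} applied to the multigeodesic from $v_k$ — hence from $x$, since $\pi_Z(v_j)\neq\emptyset$ for all $j$ on the relevant side — gives $d_Z(x,\gamma)\leq M$ directly.) It then remains to bound $d_Z(x,w)$.

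The extra ingredient is that $d_S(x,w)\leq i-1$, so the geodesic $\{v_0,\dots,v_{i-1}\}$ reaches $v_{i-1}$ which is disjoint from $\gamma$, hence $\pi_Z(v_{i-1})\neq\emptyset$ would put us in Case~2 and we would be done; in the remaining Case~1 we have $\pi_Z(v_{i-1})=\emptyset$. Here I would note $\gamma\in\pi_W(w)$ with $Z\subseteq W$, so in particular $\pi_Z(w)\neq\emptyset$, and $w$ itself is the thing being projected. Since $d_S(x,w)\leq i-1<d_S(x,y)$ and (Case 1) $v_{i-1}$ misses $W\supseteq Z$, one concludes $\pi_Z(v_j)\neq\emptyset$ for every $j$ with $d_S(v_j,w)$ large enough that $v_j$ fills against $w$'s complement, and then the Bounded Geodesic Image Theorem applied along any geodesic from $x$ to $w$ (whose vertices all project to $Z$) gives $d_Z(x,w)\leq M$; combined with $d_Z(w,\gamma)\leq 2$ this yields $d_Z(x,\gamma)\leq M+2$. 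Symmetrically, if $\gamma$ came from $N_{d_S(x,y)-i-1}(y)$ we get $d_Z(\gamma,y)\leq M+2$.

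The main obstacle is the last step: ensuring that a geodesic from $x$ to $w$ really does project nontrivially to $Z$ at every vertex, so that Theorem \ref{BGIT} applies. The clean way to see this is that in Case~1 the subsurface $W$ (and a fortiori $Z$) is disjoint from both $v_{i-1}$ and $v_{i+1}$; since $v_{i-1}$ and $v_{i+1}$ are distance $2$ apart and hence do not fill, this is genuinely possible, and one must use that $w$ and $\gamma$ both meet $W$ essentially together with a diameter/fellow-traveling argument in $C(S)$ — or simply the observation that any vertex of a geodesic from $x$ to $w$ that failed to meet $Z$ would be disjoint from $Z$, hence (being within distance $i-1$ of $x$) would combine with the tail of the original geodesic to contradict $d_S(x,y)$ or the thickness hypothesis. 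I would spell this cofilling argument out carefully, as it is the only place where the enlargement from endpoints to balls is actually exploited, and it is where an off-by-one in the radii $i-1$ and $d_S(x,y)-i-1$ would bite.
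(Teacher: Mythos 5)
Your route is the same as the paper's: when $\pi_{Z}(v_{i-1})=\pi_{Z}(v_{i+1})=\emptyset$ you bound $d_{Z}(x,w)\leq M$ by Theorem \ref{BGIT} along a geodesic from $x$ to the ball point $w\in N_{i-1}(x)$ and add the estimate $d_{Z}(w,\gamma)\leq 2$ coming from the Case 1 computation of Lemma \ref{weekend}, and otherwise you fall back on Case 2 of Lemma \ref{weekend}; this is exactly how the paper gets $M+2$. The one step you explicitly leave open --- that every vertex of a geodesic from $x$ to $w$ cuts $Z$, so that Theorem \ref{BGIT} applies --- is, as written, a genuine gap, and the ``simple observation'' you sketch does not close it: a vertex $u$ disjoint from $Z$ only satisfies $d_{S}(u,v_{i+1})\leq 2$ (both $u$ and $v_{i+1}$ are within distance $1$ of $\partial Z$, or of the core of $Z$ in the annular case), so concatenating with the tail of the original geodesic gives $d_{S}(x,y)\leq d_{S}(x,u)+2+\big(d_{S}(x,y)-i-1\big)$, which for $d_{S}(x,u)=i-1$ is an equality rather than a contradiction --- precisely the off-by-one you anticipated. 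Your intermediate sentence about $v_{j}$ ``filling against $w$'s complement'' does not address this either, since what must cut $Z$ are the vertices of $[x,w]$, not the $v_{j}$.

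The repair is short. Any \emph{interior} vertex $u$ of a geodesic from $x$ to $w$ (including $x$) has $d_{S}(x,u)\leq i-2$, so if it were disjoint from $Z$ then $d_{S}(x,v_{i+1})\leq d_{S}(x,u)+d_{S}(u,v_{i+1})\leq i$, contradicting $d_{S}(x,v_{i+1})=i+1$ for a thick geodesic. The only vertex that can sit at distance $i-1$ from $x$ is $w$ itself, and $w$ does cut $Z$: if the arcs of $w\cap W$ were disjoint from $Z$, then, since $\partial W$ is also disjoint from $Z\subseteq W$, every curve of $\pi_{W}(w)$ would be disjoint from $Z$, contradicting $\pi_{Z}(\gamma)\neq\emptyset$. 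Note that this also fixes your parenthetical claim ``$\gamma\in\pi_{W}(w)$ with $Z\subseteq W$, so in particular $\pi_{Z}(w)\neq\emptyset$'', which does not follow from $Z\subseteq W$ alone --- you must invoke $\pi_{Z}(\gamma)\neq\emptyset$. (In your Case 2, also be aware that when $\pi_{Z}(v_{i-1})=\emptyset$ and $\pi_{Z}(v_{i+1})\neq\emptyset$ the conclusion is $d_{Z}(\gamma,y)\leq M$ rather than $d_{Z}(x,\gamma)\leq M$; this is harmless since the statement is a disjunction.) With these insertions your argument coincides with the paper's proof, and the radii $i-1$ and $d_{S}(x,y)-i-1$ are exactly what make the distance count close.
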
 
\begin{proof}
The first statement is obvious. For the second statement, we assume $\c \in  \Proj_{F(v_{i-1}\cup v_{i+1})}(c)$ where $c\in N_{i-1}(x)$.

Case 1: Suppose $\pi_{Z}(v_{i-1})= \emptyset$ and $\pi_{Z}(v_{i+1})= \emptyset$. Then $Z \subseteq W \in (F(v_{i-1}\cup v_{i+1}))^{nac}$. We have $d_{Z}(x,c)\leq M$ by Theorem \ref{BGIT} since $\pi_{Z}(v_{i+1})= \emptyset$, and $d_{Z}(c,\c)\leq 2$ by Lemma \ref{weekend}, so we have $d_{Z}(x,\c)\leq d_{Z}(x, c)+d_{Z}(c,\c)\leq M+2.$

Case 2: Suppose $\pi_{Z}(v_{i-1})\neq \emptyset$ or $\pi_{Z}(v_{i+1})\neq \emptyset$. We have $d_{Z}(x,\gamma)\leq M$ or $d_{Z}(\gamma,y)\leq M.$
\end{proof}


\begin{figure}[h]
\centering
  \includegraphics[width=100mm]{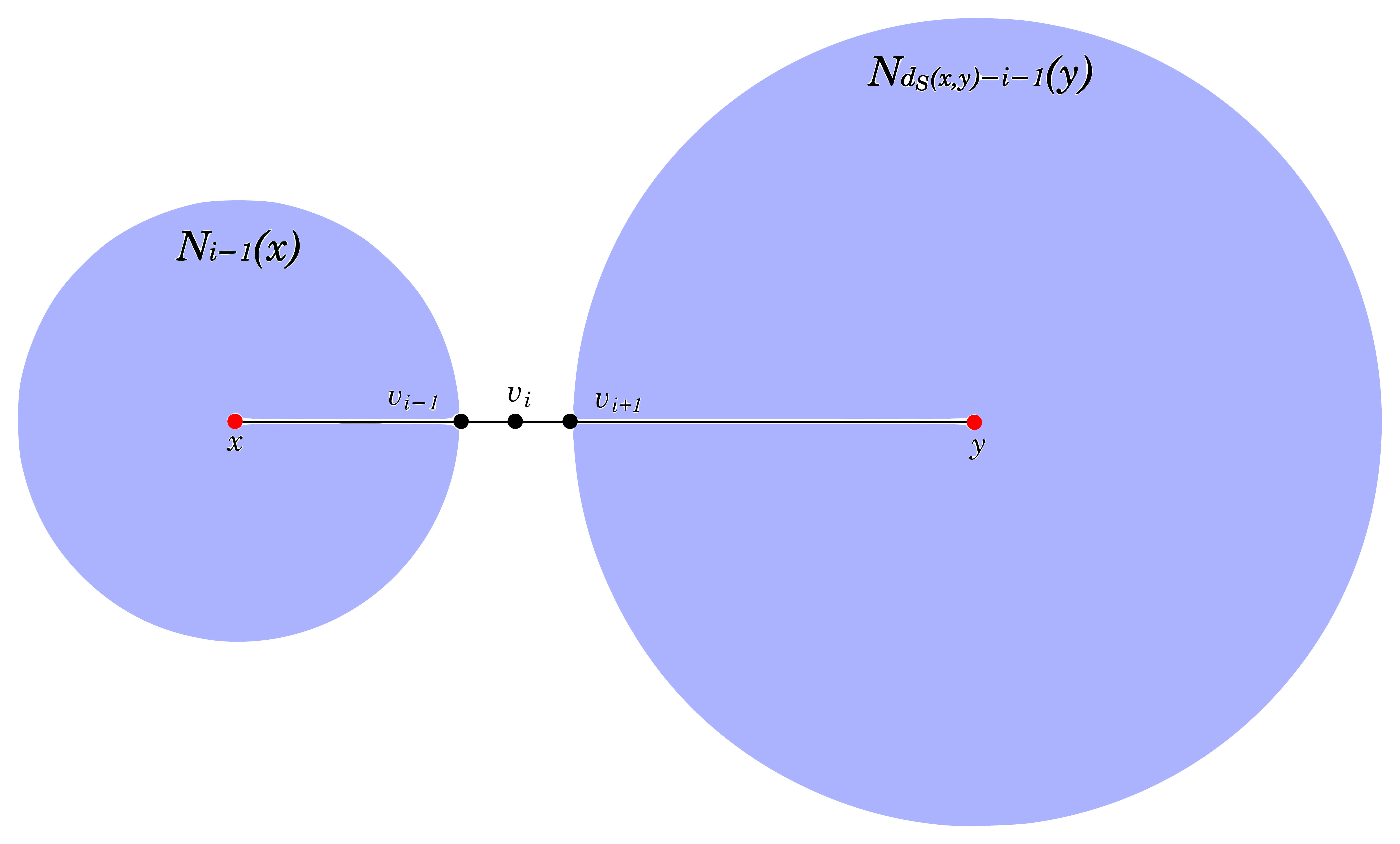}
 \caption{Red curves are used in Lemma \ref{weekend} and blue curves are used in Lemma \ref{weaken} via $\Proj_{F(v_{i-1}\cup v_{i+1})}$. See Figure \ref{Figk10}.}
 \label{Figk2}
\end{figure}

Now, we define \Projection geodesics.
\begin{definition}[\Projection geodesics]\label{pg}
Let $x,y\in C(S)$ such that $d_{S}(x,y)>2$. Let $g=\{x=v_{0}, v_{1}, \cdots ,v_{d_{S}(x,y)}=y\} \in \L_{thick}(x,y)$. We let $\EE_{i}$ denote the replacement procedure on time $i$--point, $$\EE_{i}:\L_{thick}(x,y)\longrightarrow \L_{thick}(x,y),$$ where $\EE_{i}(g)$ is the set of all thick geodesics which can possibly arise from replacing $v_{i}$ by the subsets of $\Proj_{F(v_{i-1}\cup v_{i+1})} \big( N_{i-1}(x) \cup N_{d_{S}(x,y)-i-1}(y) \big)$. If $G\subseteq \L_{thick}(x,y)$ then we define $\EE_{i}(G):=\cup_{g\in G}\EE_{i}(g).$ We define \[\L_{\EE}(x,y):=  \bigg( \EE_{d_{S}(x,y)} \circ \cdots  \circ \EE_{2}  \circ \EE_{1} \big(\L_{thick}(x,y)\big)  \bigg) \cap \L(x,y).  \] We call an element of $\L_{\EE}(x,y)$ a \emph{\projection geodesic}.

As a special case, we alter $\EE_{i}(g)$ by $\PP_{i}(g)$ where $\PP_{i}(g)$ is the set of all thick geodesics which can possibly arise from replacing $v_{i}$ by the subsets of $\Proj_{F(v_{i-1}\cup v_{i+1})}(x \cup y) $. We define \[\L_{\PP}(x,y):=  \bigg( \PP_{d_{S}(x,y)} \circ \cdots  \circ \PP_{2}  \circ \PP_{1} \big(\L_{thick}(x,y)\big)  \bigg) \cap \L(x,y).  \] 

Clearly, we have $$\L_{\PP}(x,y)\subseteq \L_{\EE}(x,y).$$
\end{definition}

One could see that, in Example \ref{EEXX} and in Definition \ref{pg}, we do not need to start the replacement at time $1$--point. We just need to replace every vertex of a given thick geodesic. In particular, instead of using $\EE_{d_{S}(x,y)} \circ \cdots  \circ \EE_{2}  \circ \EE_{1}$, we can use any composition consisting of the elements of $\{\EE_{i}\}_{i=0}^{d_{S}(x,y)}$ such that each $\EE_{i}$ appears at least once, and use it to define \projection geodesics. In fact, one can check that it does not make any difference in the elements of $\L_{\EE}(x,y)$. Hence, even though the index of $\projection_{i}$ depends on the choice of time $0$--point, which can be either $x$ or $y$, the choice is not important in Definition \ref{pg}. 

We observe 
\begin{theorem}\label{M-weak}
Let $x,y\in C(S)$ such that $d_{S}(x,y)>2$. Then 
\begin{itemize}
\item $\L_{\PP}(x,y)\neq \emptyset$ and $\L_{\EE}(x,y)\neq \emptyset.$
\item $\L_{T}(x,y) \subseteq \L_{\PP}(x,y).$
\item $\L_{T}(x,y)\subseteq \L_{\PP}(x,y)\subseteq \L_{WT}^{M}(x,y).$
\item $\L_{T}(x,y)\subseteq \L_{\PP}(x,y)\subseteq  \L_{\EE}(x,y) \subseteq \L_{WT}^{M+2}(x,y).$
\end{itemize}
\end{theorem}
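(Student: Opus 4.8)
The plan is to prove the four bullet points of Theorem~\ref{M-weak} in order, with each one largely reducing to work already done in Lemma~\ref{weekend}, Lemma~\ref{weaken}, and Definition~\ref{pg}.

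\textbf{Nonemptiness.} First I would observe that $\L_{thick}(x,y)\neq\emptyset$ (any geodesic is a thick geodesic), and then note that each replacement map $\PP_i$ (resp.\ $\EE_i$) sends a nonempty subset of $\L_{thick}(x,y)$ to a nonempty subset: indeed $\Proj_{F(v_{i-1}\cup v_{i+1})}(x\cup y)$ is nonempty since $F(v_{i-1}\cup v_{i+1})\subsetneq S$ (Definition~\ref{projection}), so replacing $v_i$ by $\partial(F(v_{i-1}\cup v_{i+1}))$ — which is always an admissible choice — yields a valid thick geodesic by the first bullet of Lemma~\ref{weekend} (resp.\ Lemma~\ref{weaken}). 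Iterating the $d_S(x,y)-1$ replacement maps keeps the set nonempty; the one point requiring care is that $\L_{\PP}(x,y)$ and $\L_{\EE}(x,y)$ are defined by intersecting with $\L(x,y)$, i.e.\ we need at least one of the resulting \emph{thick} geodesics to actually be an honest geodesic. Here I would invoke exactly the tightening argument of Masur--Minsky (Lemma~\ref{construction}): starting from a genuine geodesic $\{u_i\}$ and at each stage choosing, among the curves of $\Proj_{F(v_{i-1}\cup v_{i+1})}(x\cup y)$, a single curve inside the multicurve $\partial(F(v_{i-1}\cup v_{i+1}))$, the resulting sequence is an honest geodesic (one produces a tight geodesic this way). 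This simultaneously proves nonemptiness and the containment $\L_T(x,y)\subseteq\L_{\PP}(x,y)$.

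\textbf{$\L_T(x,y)\subseteq\L_{\PP}(x,y)$.} As just indicated, the point is that the tight-geodesic construction of Lemma~\ref{construction} is literally a special case of the $\PP_i$-replacement procedure: a tight geodesic is obtained from a tight multigeodesic $\{v_i\}$ with $v_i=\partial(F(v_{i-1}\cup v_{i+1}))$, and since $\partial(F(v_{i-1}\cup v_{i+1}))\subseteq\Proj_{F(v_{i-1}\cup v_{i+1})}(x\cup y)$ always, each tightening step is one of the allowed replacements. The mild subtlety is the order of operations: the definition of $\L_{\PP}$ applies $\PP_1$ then $\PP_2$, etc., whereas a tight geodesic is tight at \emph{every} vertex simultaneously; but Lemma~\ref{construction} says precisely that tightening at time $2$ does not destroy tightness at time $1$, so the iterated composition reproduces every tight geodesic. (The remark following Definition~\ref{pg} that the order of the $\EE_i$ does not matter is the clean way to phrase this.)

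\textbf{The two weak-tightness containments.} For $\L_{\PP}(x,y)\subseteq\L_{WT}^M(x,y)$: let $g=\{v_i\}\in\L_{\PP}(x,y)$ and let $Z\subsetneq S$ with $\pi_Z(v_i)\neq\emptyset$. Since $g$ arises from the iterated $\PP$-replacement, $v_i$ is a curve in $\Proj_{F(w_{i-1}\cup w_{i+1})}(x\cup y)$ for the relevant adjacent multicurves $w_{i\pm1}$ at the stage $v_i$ was installed; the second bullet of Lemma~\ref{weekend} then gives $d_Z(x,v_i)\leq M$ or $d_Z(v_i,y)\leq M$, which is exactly $M$-weak tightness. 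Identically, $\L_{\EE}(x,y)\subseteq\L_{WT}^{M+2}(x,y)$ follows from the second bullet of Lemma~\ref{weaken}, and $\L_{\PP}(x,y)\subseteq\L_{\EE}(x,y)$ is the last line of Definition~\ref{pg} (since $\Proj_A(x\cup y)\subseteq\Proj_A(N_{i-1}(x)\cup N_{d_S(x,y)-i-1}(y))$). The main obstacle — and the only place where I would slow down — is a bookkeeping point: one must check that when $v_i$ is replaced at stage $i$, the multicurves flanking it are no longer $v_{i-1},v_{i+1}$ of the original thick geodesic but the already-replaced ones, and verify that Lemma~\ref{weekend}/\ref{weaken} still apply verbatim to the current flanking curves. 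This is true because those lemmas are stated for an \emph{arbitrary} (thick) (multi)geodesic connecting $x$ and $y$, and at each stage of the composition we do have such a geodesic; but it is the step most likely to hide an off-by-one error, so I would state it explicitly. Chaining the established inclusions then yields the displayed four-term chain.
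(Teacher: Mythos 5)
Your proposal is correct and follows essentially the same route as the paper: nonemptiness straight from Definition \ref{pg}, the containment $\L_{T}(x,y)\subseteq\L_{\PP}(x,y)$ because tightening is a special case of the replacement procedure (the paper phrases this more tersely by observing that a tight multigeodesic $G$ satisfies $G\in \PP_{d_{S}(x,y)}\circ\cdots\circ\PP_{1}(G)$, since $v_{i}=\partial(F(v_{i-1}\cup v_{i+1}))\subseteq\Proj_{F(v_{i-1}\cup v_{i+1})}(x\cup y)$ and the flanking vertices are then never altered), and the two weak-tightness inclusions from the second bullets of Lemma \ref{weekend} and Lemma \ref{weaken} exactly as you describe. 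Your extra appeal to Lemma \ref{construction} (for nonemptiness and for the order-of-operations worry) is harmless but not needed, since choosing single curves from the nonempty sets $\Proj_{F(\cdot)}(x\cup y)$ already yields an honest geodesic and the fixed-point observation above disposes of the ordering issue.
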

\begin{proof}
The first statement is obvious by Definition \ref{pg}. 
For the second statement, let $g\in \L_{T}(x,y)$ and $G$ be a tight multigeodesic which gives rise to $g$. Then $G\in \PP_{d_{S}(x,y)} \circ \cdots  \circ \PP_{2}  \circ \PP_{1}(G)$, hence $g\in  \L_{\PP}(x,y) .$
The third statement follows with Lemma \ref{weekend}.
The last statement follows with Lemma \ref{weaken}. \begin{figure}[h]
\centering
  \includegraphics[width=50mm]{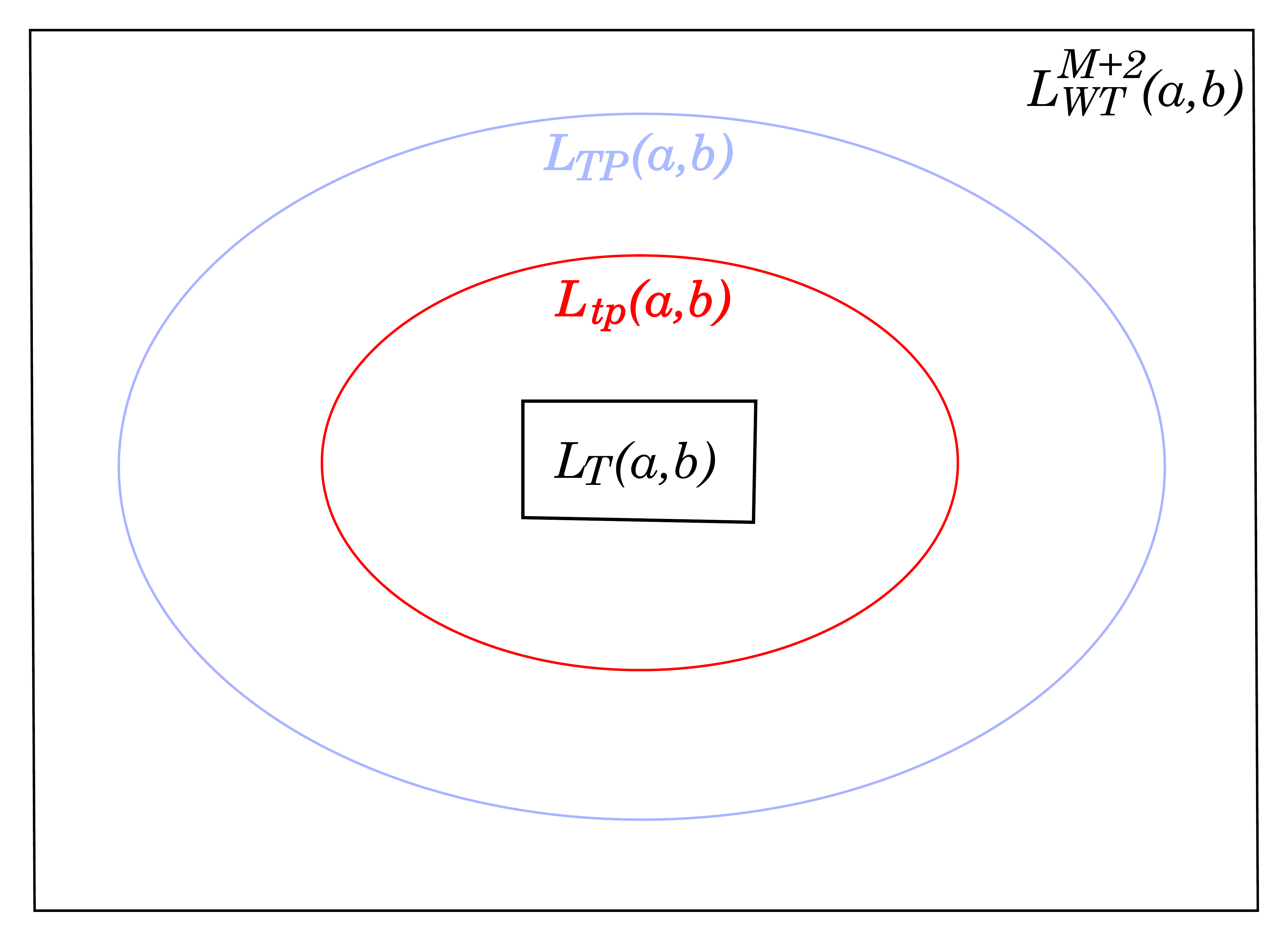}
 \caption{$\mathcal{L}_{T}(a,b)\subseteq \mathcal{L}_{\PP}(a,b) \subseteq \mathcal{L}_{\EE}(a,b) \subseteq \mathcal{L}_{WT}^{M+2}(a,b)$.}
 \label{Figk10}
\end{figure}
\end{proof}

The existence of \projection geodesics is immediate compared to the existence of tight geodesics which requires Lemma \ref{construction}. This is by virtue of the definition of weak tight geodesics. 

\section{Tight geodesics versus \Projection geodesics}\label{bpt}

We have observed so far that \projection geodesics contain tight geodesics, we now give specific pairs of curves where there exist \projection geodesics, which are not tight, between them. 
\begin{example}\label{lox}
Let $a,b\in C(S)$ in Figure \ref{Fig2}. Recall that $b^{c}$ means the complementary component of $b$ in $S$, in this case $b^{c}$ contains one component. We can pick $\gamma \in C(b^{c})$ such that $d_{b^{c}}(a,\gamma) \gg M.$ By Theorem \ref{BGIT} and the fact that $b$ is a non--separating curve, we can conclude that every geodesic between $a$ and $\gamma$ needs to contain $b$; in particular this means that $d_{S}(a,\gamma)=3$ since $d_{S}(a,b)=2$. 
\begin{figure}[h]
\centering
  \includegraphics[width=50mm]{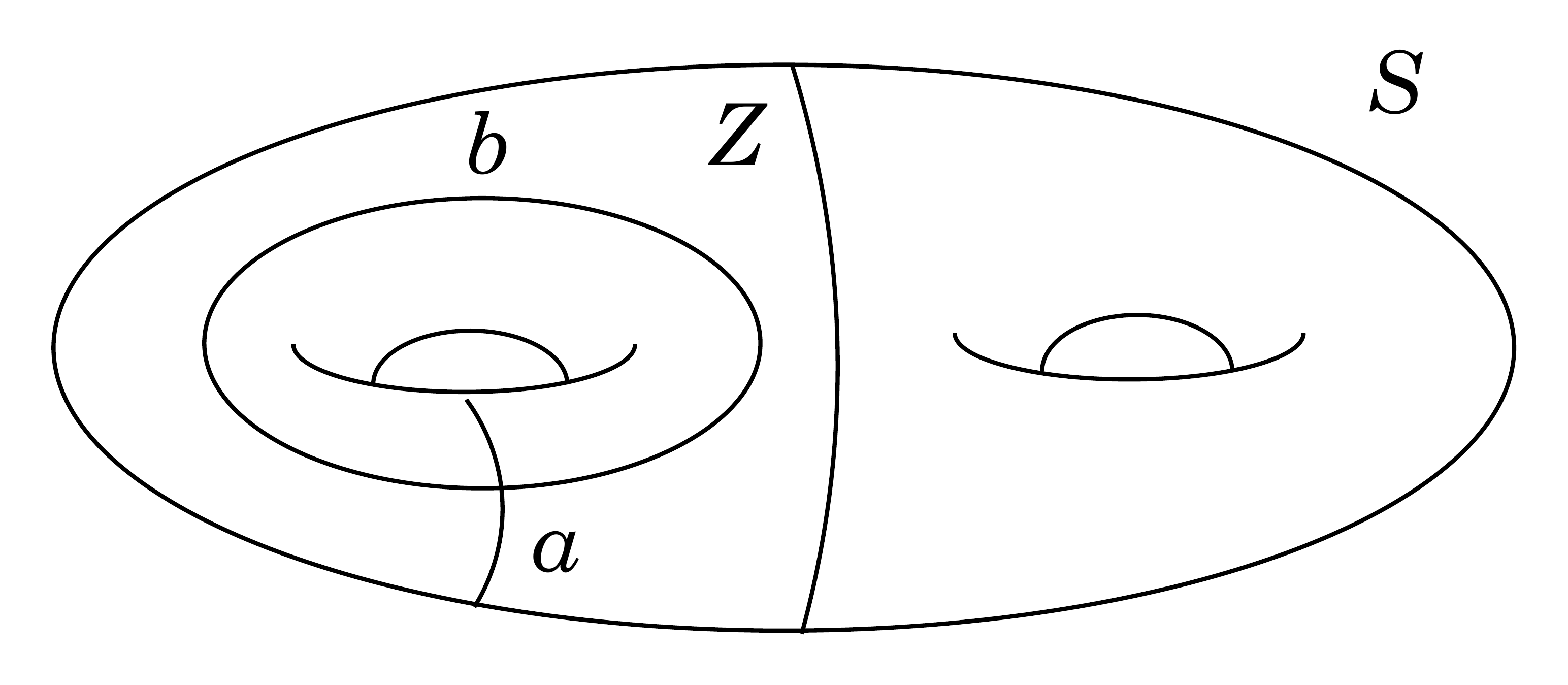}
 \caption{The tight geodesic \{$a,\partial(Z),b,\gamma$\}.}
 \label{Fig2}
\end{figure}
We claim the following.
\begin{enumerate}
\item $|\L_{T}(a,\gamma)|=1$.
\item $| \L_{\PP}(a,\gamma) |=| \Proj_{F(a\cup b)}(  \gamma)  |$.
\end{enumerate}

\emph{Proof of Claim $1$.} Since every geodesic between $a$ and $\gamma$ needs to contain $b$, we conclude that $\{a,\partial(Z),b,\gamma\}$ is the only tight geodesic between $a$ and $\gamma$.\qed

\emph{Proof of Claim $2$.} We have $$ \L_{\PP}(a,\gamma)=\{a,x,b, \gamma | x \in \Proj_{F(a \cup b)}( \gamma ) \}.$$\qed


\end{example}

The following example is a general version of Example \ref{lox}. 
\begin{example}\label{lox3}
Let $a,b_{1}\in C(S)$ and let $Z_{1}\subsetneq Z_{2} \subsetneq \cdots \subsetneq Z_{n-1}\subsetneq  Z_{n}\subsetneq S$ be a sequence of subsurfaces in Figure \ref{Fig3}. 
\begin{figure}[h]
\centering
  \includegraphics[width=100mm]{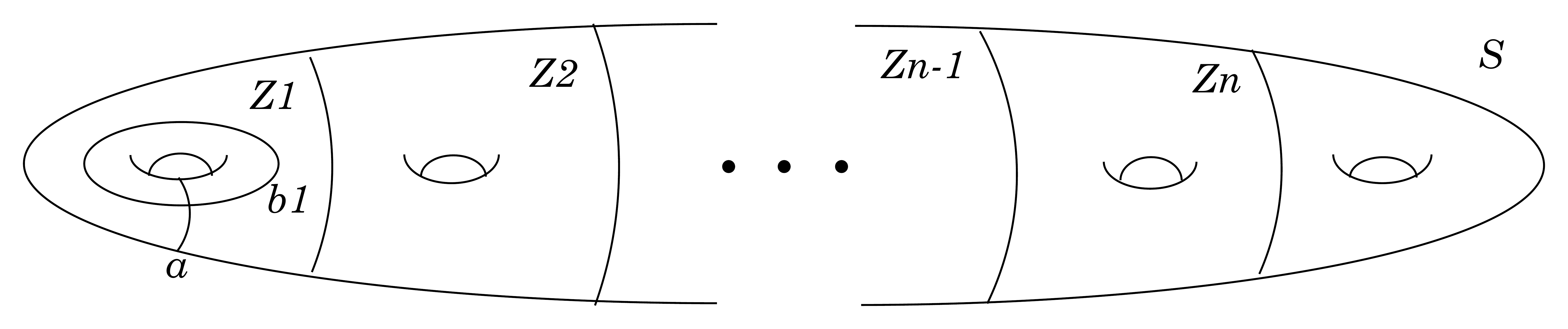}
 \caption{$Z_{1}\subsetneq Z_{2} \subsetneq \cdots \subsetneq Z_{n-1}\subsetneq  Z_{n}\subsetneq S$.}
 \label{Fig3}
\end{figure}
We inductively pick a non--separating curve $b_{i}$ so that 
\begin{itemize}
\item $b_{i}$ fills $Z_{i}$ with $a$
\item $i(b_{i},b_{j})=0$ for all $j< i.$ 
\end{itemize}
For instance, $b_{i}$ can be taken from $C(Z_{i}\setminus \cup_{j< i}b_{j})$ so that $d_{(Z_{i}\setminus \cup_{j< i}b_{j})}(a,b_{i})\gg M$.

Let $Y=\{b_{i}\}_{i=1}^{n}$, which is a simplex. Note that $Y^{c}$ contains only one component since $b_{i}$ is a non--separating curve for all $i$. We pick $\gamma \in C(Y^{c})$ so that $d_{Y^{c}}(a, \gamma)\gg M$, then a geodesic between $a$ and $\gamma$ needs to contain $b_{i}$ for some $i$. Indeed, we can always find a geodesic passing through $b_{i}$ for all $i$, i.e., $\{a,\partial(Z_{i}),b_{i},\gamma \}$.

We claim the following.

\begin{enumerate}
\item $|\L_{T}(a,\gamma)|= \sum_{i=1}^{n}|\cup_{j\geq i} \partial(Z_{j})|= \sum_{i=1}^{n}i$.
\item $|\L_{\PP}(a,\gamma)|= \sum_{i=1}^{n}|\cup_{j\geq i} \Proj_{Z_{j}}(  \gamma)|= \sum_{i=1}^{n} i\cdot | \{ \Proj_{Z_{i}}(  \gamma)\setminus \cup_{j>i} \Proj_{Z_{j}}(  \gamma) \}|$.
\item $|\L_{\PP}(a,\gamma)| \geq 2\cdot | \L_{T}(a,\gamma)|$.

\end{enumerate}

\emph{Proof of Claim $1$.} Possible simplices appearing on tight multigeodesics between $a$ and $\gamma$, which are adjacent to $\gamma$, are formed by $\{b_{i}\}_{i=1}^{n}$; let $\triangle_{i}$ denote a simplex which contains $b_{i}$ as the highest index curve. For instance, $\triangle_{2}$ can be $\{b_{2}\}$ or $\{b_{1},b_{2}\}$. We observe that $F(a,\triangle_{i}) =Z_{i}$ so $\partial(F(a,\triangle_{i}))=\partial(Z_{i})$ for all $i$. 
We prove that $\partial(F(\partial(Z_{i}),\gamma))= \{b_{j}| j\leq i\}$ so that we can conclude that every tight geodesic between $a$ and $\gamma$ is of the following form; $$ \cup_{0<i\leq n}\{a,\partial(Z_{i}),b_{j},\gamma| j\leq i\} =\cup_{0<i\leq n}\{a,\partial(Z_{j}),b_{i},\gamma| j\geq i\}.$$

Now, we prove that $\partial(F(\partial(Z_{i}),\gamma))= \{b_{j}| j\leq i\}$: since $d_{S}(a,\gamma)=3$ and $a$ is contained in $Z_{i}$, we observe that $Z_{i}^{c}\subseteq F(\partial(Z_{i}), \gamma)$, otherwise $d_{S}(a,\gamma)$ would be $2$. Therefore $\partial(F(\partial(Z_{i}),\gamma))\subseteq Z_{i}$. Clearly $b_{j}\subseteq F(\partial(Z_{i}),\gamma)^{c}$ for all $j\leq i$. If $\partial(F(\partial(Z_{i}),\gamma))\neq  \{b_{j}| j\leq i\}$, then there exists $x\in C(Z_{i})$ such that $\pi_{Y^{c}}(x)\neq \emptyset$ so that $$d_{Y^{c}}(\gamma, \partial(Z_{i}))\leq d_{Y^{c}}(\gamma, x)+d_{Y^{c}}( x, \partial(Z_{i})) \leq 2+2,$$ which is a contradiction since $d_{Y^{c}}(\partial(Z_{i}), \gamma)\gg M$ as $d_{Y^{c}}(a, \gamma)\gg M$. \qed

\emph{Proof of Claim $2$.} Every \projection geodesic passing through $b_{i}$ is of the following form: $$ \cup_{0<i\leq n}\{a,c, b_{i}, \gamma | c\in    \cup_{j\geq i} \Proj_{Z_{j}}(  \gamma)\}.$$ \qed 

\emph{Proof of Claim $3$.} See Figure \ref{tightvsBP}.  
\begin{figure}[h]
\centering
  \includegraphics[width=120mm]{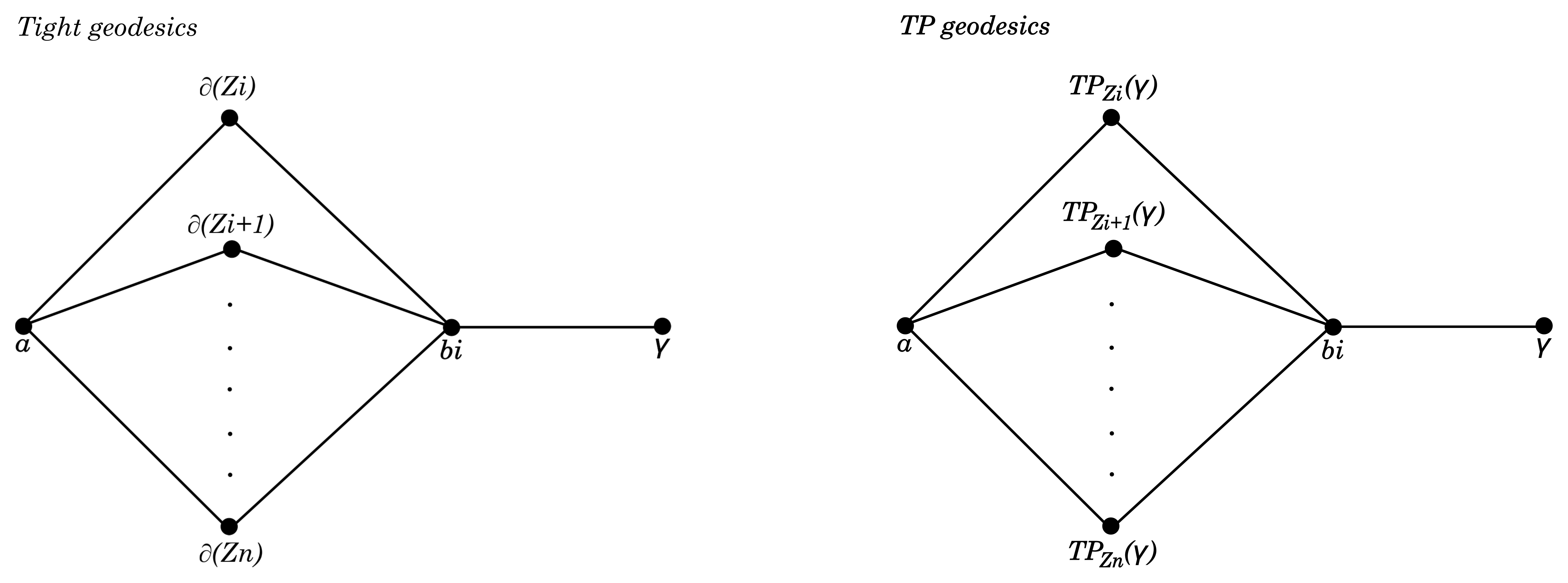}
 \caption{Tight geodesics (left) and \Projection geodesics (right) between $a$ and $\gamma$ which pass through $b_{i}$.}
 \label{tightvsBP}
\end{figure}
\qed
\end{example}

We have 
\begin{corollary}
There exist $x,y\in C(S)$ such that $d_{S}(x,y)>2$ and that $\{ \L_{\PP}(x,y) \setminus \L_{T}(x,y) \}\neq \emptyset.$ In particular, $\L_{T}(x,y)\subsetneq \L_{\PP}(x,y)\subseteq \L_{WT}^{M}(x,y).$
\end{corollary}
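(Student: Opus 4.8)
The corollary is an immediate consequence of the two examples, so the plan is simply to extract the quantitative inequality from Example \ref{lox3} (or, more simply, from Example \ref{lox}) and feed it into Theorem \ref{M-weak}. First I would fix the pair $x=a$, $y=\gamma$ produced in Example \ref{lox3} with, say, $n\geq 1$ chosen so that the subsurfaces $Z_i$ genuinely carry projections of $\gamma$ — in particular, so that $\Proj_{Z_1}(\gamma)\setminus\partial(Z_1)\neq\emptyset$, which holds because $\gamma$ was chosen with $d_{Y^c}(a,\gamma)\gg M$ and hence $\gamma$ essentially intersects the non-annular component of $F(a\cup b_1)^{nac}$. This guarantees that the count in Claim 2 of Example \ref{lox3} strictly exceeds the count in Claim 1, i.e. $|\L_{\PP}(a,\gamma)| > |\L_T(a,\gamma)|$ (Claim 3 even gives a factor of $2$). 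Since a strict inequality of cardinalities forces the sets to differ and we already know $\L_T(a,\gamma)\subseteq \L_{\PP}(a,\gamma)$ by Theorem \ref{M-weak}, we conclude $\{\L_{\PP}(a,\gamma)\setminus\L_T(a,\gamma)\}\neq\emptyset$.

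The second sentence of the corollary then follows by combining this with the containments already recorded: $\L_T(x,y)\subseteq\L_{\PP}(x,y)$ is the second bullet of Theorem \ref{M-weak}, and $\L_{\PP}(x,y)\subseteq\L_{WT}^{M}(x,y)$ is the third bullet; the strict inclusion $\L_T(x,y)\subsetneq\L_{\PP}(x,y)$ is exactly what the previous paragraph established. So the chain $\L_T(x,y)\subsetneq\L_{\PP}(x,y)\subseteq\L_{WT}^{M}(x,y)$ holds for $x=a$, $y=\gamma$, which is a witness for the existential statement.

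The only genuinely non-formal point — and hence the one I would spell out with care — is verifying that $\Proj_{F(a\cup b_i)}(\gamma)$ is strictly larger than $\partial(F(a\cup b_i))$, i.e. that the projecting procedure actually contributes new curves beyond the tightening procedure. This is where the hypothesis $d_{Z_i\setminus\cup_{j<i}b_j}(a,b_i)\gg M$ (equivalently, the fact that $b_i$ fills $Z_i$ with $a$) is used: it ensures $\gamma$, which lives in $Y^c$ and has large projection distance to $a$ there, restricts essentially to each non-annular complementary component of $F(a\cup b_i)$, so $\pi_Y(\gamma)\neq\emptyset$ for the relevant $Y\in F(a\cup b_i)^{nac}$ and these projections are not contained in $\partial(F(a\cup b_i))$. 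Everything else is bookkeeping already done inside Example \ref{lox3}. I expect no real obstacle; the main care is just making sure the chosen example is non-degenerate (distance $>2$, which holds since $d_S(a,\gamma)=3$, and the projecting procedure non-trivial), after which the corollary is a one-line deduction.
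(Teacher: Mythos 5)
Your proposal is correct and follows essentially the same route as the paper: the corollary is deduced directly from Example \ref{lox} (or Example \ref{lox3}), whose claims show $|\L_{\PP}(a,\gamma)|>|\L_{T}(a,\gamma)|$, combined with the containments $\L_{T}\subseteq\L_{\PP}\subseteq\L_{WT}^{M}$ from Theorem \ref{M-weak}. Your extra care in checking that the projecting procedure genuinely produces curves beyond $\partial(F(a\cup b_i))$ is exactly the non-degeneracy the paper builds into the examples via the choice $d_{Y^{c}}(a,\gamma)\gg M$, so there is no gap.
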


Example \ref{lox} and Example \ref{lox3} discuss the difference between tight geodesics and \Projection geodesics for pairs of curves of short distance. In this light, it is worthwhile noting the followings:  

\begin{lemma}\label{hc1}
There exist $x_{p},y_{p}\in C(S)$ such that $d_{S}(x_{p},y_{p})=2^{p}$, $|\L_{T}(x_{p},y_{p})|=1$, and $2^{2^{p-1}}\leq |\L_{\PP}(x_{p},y_{p})|\leq (12\cdot |\chi(S)|)^{2^{p-1}}.$
\end{lemma}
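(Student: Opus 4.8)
The plan is to iterate the construction of Example \ref{lox3} in a self-similar way, doubling the distance at each stage. I would build $x_p, y_p$ with $d_S(x_p,y_p)=2^p$ by induction so that the ``interesting'' choices at the midpoint reproduce a scaled copy of the same configuration on each of the two halves. Concretely, take a geodesic $\{x_p = w_0, w_1, \dots, w_{2^p} = y_p\}$ and arrange that the midpoint $w_{2^{p-1}}$ sits in a configuration like the $b$-curve of Example \ref{lox}: its neighbors $w_{2^{p-1}-1}, w_{2^{p-1}+1}$ fill a complementary subsurface $Z$ of some fixed (small) complexity, and the two halves $\{x_p,\dots,w_{2^{p-1}}\}$ and $\{w_{2^{p-1}},\dots,y_p\}$ are themselves scaled copies of the stage-$(p-1)$ example sitting inside disjoint subsurfaces, with the endpoints chosen so every geodesic is forced (via Theorem \ref{BGIT} and non-separating curves, exactly as in Examples \ref{lox}, \ref{lox3}) to pass through the prescribed midpoint-type curves. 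The base case $p=1$ is Example \ref{lox}: there $d_S(x_1,y_1)=2$ — wait, we need $d_S=2^p$, so for $p=1$ we want distance $2$; but Definition \ref{dokoni} requires $d_S(x,y)>2$, so in fact the honest base case is $p=1$ with a distance-$2$ pair where no replacement happens and $|\L_{\PP}|=1$, or better, bootstrap from Example \ref{lox3} at distance $3$ — I would reconcile the exact bookkeeping so the recursion cleanly gives $d_S(x_p,y_p)=2^p$, e.g. starting the recursion at $p$ large enough and padding.

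The uniqueness $|\L_T(x_p,y_p)|=1$ should follow from the forcing argument: at every vertex of the geodesic, the tightening procedure $v_i = \partial(F(v_{i-1}\cup v_{i+1}))$ produces a \emph{unique} multicurve, and since every geodesic is forced to pass through the same sequence of (multi)curves — because the relevant curves are non-separating and the projections to the omitted subsurfaces are $\gg M$ — there is only one tight multigeodesic, hence (as the curves involved are themselves single curves or have canonically determined supports) only one tight geodesic. This is the same mechanism as Claim 1 of Example \ref{lox}, just applied recursively; the key point is that ``tightening adds no choices'' whereas ``projecting does.''

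For the count of \projection geodesics, the recursion gives $|\L_{\PP}(x_p,y_p)| = f(p)$ where roughly $f(p) = (\text{number of choices at the midpoint})\cdot f(p-1)^2$: once a midpoint curve is chosen, the two halves are independent copies of the stage-$(p-1)$ problem, and by Lemma \ref{weekend} the replacement at the midpoint ranges over $\Proj_{F(w_{2^{p-1}-1}\cup w_{2^{p-1}+1})}(\text{endpoints})$. The lower bound comes from the fact that this projection set has size at least $2$ (one can choose $\gamma$ in the complementary subsurface so that $|\Proj_{F}(\gamma)| \geq 2$, e.g. by making it project nontrivially), giving $f(p)\geq 2\cdot f(p-1)^2$, which unwinds to $f(p)\geq 2^{2^{p-1}}$. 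The upper bound requires bounding the number of curves in $C(Z)$ that can appear at a given time on \emph{any} geodesic between the endpoints: since all such curves lie within bounded distance of a fixed geodesic segment in the curve graph, and — crucially — are at bounded projection distance $\leq M$ from an endpoint in every subsurface (this is precisely the $M$-weak tightness from Theorem \ref{M-weak}, so Theorem \ref{weaktight} applies), the number of such curves is bounded by $N_S(2M,3)$, which Webb's bound (Theorem \ref{webb's}/Remark \ref{WEBS}) makes explicit and which I would estimate by something like $12|\chi(S)|$ per level; squaring and recursing gives $f(p)\leq (12|\chi(S)|)^{2^{p-1}}$.

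The main obstacle I anticipate is the upper bound on the branching factor: one must show that at the midpoint only \emph{boundedly many} (in terms of $|\chi(S)|$ only, not $p$) distinct curves can legitimately appear, and that these are exactly captured by $\Proj_F$ of the finitely many curves in the relevant ball around the endpoint. Getting the constant down to $12|\chi(S)|$ will require invoking the sharp form of Bowditch/Webb's finiteness (Remark \ref{WEBS}: $K$ grows exponentially in $\xi(S)$ with sharp rate) together with a careful count of how many boundary-curve and projection-curve choices a single subsurface of $S$ contributes — essentially bounding $|\Proj_{F}(c)|$ and the number of relevant $F$'s. The other delicate point is verifying that the recursive gluing genuinely forces geodesics through the midpoints at \emph{every} scale simultaneously, i.e. that the nested large-projection conditions ($d_{Z_j^c}(\cdot,\cdot)\gg M$ at all levels) can be satisfied on a single fixed surface $S$ of bounded complexity with $p\to\infty$ — this needs the subsurfaces at deep levels to be reused, so one should phrase the forcing in terms of a bounded collection of subsurfaces with projection coefficients growing in $p$, rather than an unbounded nested chain.
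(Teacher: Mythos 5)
Your high-level plan (iterate the Example \ref{lox} configuration so that the distance doubles at each stage, then count: at least $2$ choices at each ``free'' slot, at most $O(|\chi(S)|)$ choices, with $2^{p-1}$ free slots) is the right shape, but the two steps you yourself flag as obstacles are exactly where the proposal breaks, and the paper resolves them differently. First, the construction: you cannot make ``the two halves scaled copies of the stage-$(p-1)$ example sitting inside disjoint subsurfaces,'' because any two curves contained in a proper subsurface $Z\subsetneq S$ are disjoint from $\partial(Z)$ and hence at distance at most $2$ in $C(S)$; a length-$2^{p-1}$ geodesic segment cannot be carried by a proper subsurface, so the self-similar ``gluing in disjoint subsurfaces'' picture is not realizable, and your closing remark about reusing subsurfaces does not supply a mechanism. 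The paper's mechanism is concrete: set $x_p=a$, $y_1=b$, and inductively $y_p=\phi_{p-1}(a)$ where $\phi_{p-1}$ is a pseudo--Anosov supported on $y_{p-1}^{c}$ with $d_{y_{p-1}^{c}}(a,\phi_{p-1}(a))\gg M$. Then $d_S(y_{p-1},y_p)=d_S(y_{p-1},a)$ since $\phi_{p-1}$ fixes $y_{p-1}$, BGIT forces every geodesic from $a$ to $y_p$ through each $y_i$, the even-time vertices are unique (giving $|\L_T|=1$), and the free choices occur only at the odd times --- the second half is the homeomorphic image of the first half, not a copy inside a subsurface.

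Second, the upper bound: appealing to Theorem \ref{weaktight} ($N_S(2M,3)$) or to Webb's constant $K$ cannot produce $12\cdot|\chi(S)|$ --- those bounds are exponential in $\xi(S)$ (Remark \ref{WEBS}), and Theorem \ref{weaktight} bounds vertices of weak tight geodesics near a point, which is the wrong tool for counting the replacement set. The paper instead counts directly: at an odd time the admissible curves are $\Proj_{F(v_{i-1}\cup v_{i+1})}(x\cup y)$, whose non-boundary elements come from the arcs $\{x\cap W\}$ and $\{y\cap W\}$ in the complementary component $W$; disjoint essential arcs number at most $3\cdot|\chi(S)|$ per endpoint, giving at most $6\cdot|\chi(S)|$ curves from each endpoint, hence at most $12\cdot|\chi(S)|$ choices per odd slot and the bound $(12\cdot|\chi(S)|)^{2^{p-1}}$. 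Your lower-bound heuristic (boundary curve plus at least one genuine projection curve, so at least $2$ choices per slot) is essentially the paper's, but as written it rests on the unworkable recursive construction, so the proof as proposed does not go through.
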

\begin{proof}
We assume $S=S_{2,0}$. The arguments given here easily generalize to general surfaces. 
Let $a,b\in C(S)$ from Example \ref{lox}. We let $x_{p}=a$ and define $y_{p}$ as follows. First $y_{1}:=b$ and we inductively define $y_{p}$; we take a pseudo--Anosov map $\phi_{p-1}$ supported on $y_{p-1}^{c}$ so that $d_{y_{p-1}^{c}}(a,\phi_{p-1}(a))\gg M$, and we define $y_{p}:=\phi_{p-1}(a).$ 

Every geodesic between $a$ and $y_{p}$ needs to contain $y_{i}$ for all $0\leq i\leq p$ by Theorem \ref{BGIT}. Hence $d_{S}(a,y_{p})=2^{p}$. Furthermore, every geodesic between $a$ and $y_{p}$ passes through the unique vertex at every time $n$--point when $n$ is even. This implies that $|\L_{T}(a,y_{p})|=1$. At every time $n$--point when $n$ is odd, we get curves which arise from projections via $\Proj$. These curves contribute to $|\L_{\PP}(a,y_{p})|$ and they arise from the arcs obtained by $\{x\cap W\}$ and $\{y\cap W\}$ where $W$ is the complementary component of the subsurface filled by the adjacent curves. Note that $W$ is homeomorphic to $F(a,b)^{c}$. A topological argument shows that $\{x\cap W\}$ and $\{y\cap W\}$ are both bounded by $3\cdot |\chi(S)|$, hence when they are projected there at most $6\cdot |\chi(S)|$ curves for each set of arcs. 
\end{proof}

We note that one can easily obtain a finer lower bound in Lemma \ref{hc1} which depends only on $p$ and the topology of $S$.

The following lemma gives a sequence of pairs of curves such that the numbers of tight geodesics and \Projection geodesics between them limit to infinity, but with some definite size difference between them at each stage, which also limits to infinity. 

\begin{lemma}\label{hc2}
There exist $x_{p},y_{p}\in C(S)$ such that $d_{S}(x_{p},y_{p})=3\cdot 2^{p-1}$, $|\L_{T}(x_{p},y_{p})|=\big( \frac{\chi(S)^{2}}{8}-\frac{\chi(S)}{4}\big)^{2^{p-1}}$, and $|\L_{\PP}(x_{p},y_{p})|\geq 2^{p-1}\cdot |\L_{T}(x_{p},y_{p})| .$
\end{lemma}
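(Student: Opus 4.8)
The plan is to iterate the construction of Lemma~\ref{hc1}, but starting from the distance-$3$ configuration of Example~\ref{lox3} (with a suitably chosen $n$ so that the single ``slab'' of length $3$ already contributes both a large number of tight geodesics and a factor-$2$ surplus of \projection geodesics) rather than from the distance-$2$ configuration of Example~\ref{lox}. Concretely, I would fix $S=S_{2,0}$ (the argument extends to general $S$ exactly as in Lemma~\ref{hc1}), set $x_p:=a$, and build $y_p$ by a sequence of pseudo-Anosov ``spins''. Let $y_1:=\gamma$ be the endpoint produced in Example~\ref{lox3} for the maximal admissible $n=n(S)$, so that $d_S(a,y_1)=3$, and inductively take a pseudo-Anosov $\phi_{p-1}$ supported on $y_{p-1}^{c}$ with $d_{y_{p-1}^{c}}(a,\phi_{p-1}(a))\gg M$, setting $y_p:=\phi_{p-1}(a)$. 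As in Lemma~\ref{hc1}, Theorem~\ref{BGIT} (the Bounded Geodesic Image Theorem) forces every geodesic between $a$ and $y_p$ to pass through each $y_i$, which are mutually disjoint and occur at times $0,3,6,\dots,3(p-1)$; hence $d_S(a,y_p)=3\cdot 2^{p-1}$ once one checks that each slab between consecutive $y_i$'s has length $3$ (this is exactly the distance computation from Example~\ref{lox3} applied in $y_{i}^{c}$, which is homeomorphic to the surface of Example~\ref{lox3}).

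Next I would count tight geodesics. Because the geodesic must pass through $y_0,y_1,\dots,y_{p-1}$ at the even-index-of-slab times, the set of tight geodesics between $a$ and $y_p$ splits as an independent product over the $2^{p-1}$ slabs, each slab being a copy of the distance-$3$ situation of Example~\ref{lox3}. By Claim~1 of Example~\ref{lox3}, each slab contributes $\sum_{i=1}^{n} i=\binom{n+1}{2}$ tight geodesics; choosing $n=n(S)$ so that $\binom{n+1}{2}=\frac{\chi(S)^2}{8}-\frac{\chi(S)}{4}$ (this is where the precise topological bound ``$3|\chi(S)|$ arcs'' on the number of available $b_i$'s enters, as in the proof of Lemma~\ref{hc1}) gives $|\L_T(x_p,y_p)|=\big(\frac{\chi(S)^2}{8}-\frac{\chi(S)}{4}\big)^{2^{p-1}}$. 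The same product structure applies to \projection geodesics: each slab contributes at least $2\binom{n+1}{2}$ of them by Claim~3 of Example~\ref{lox3}, so the product is at least $2^{2^{p-1}}\binom{n+1}{2}^{2^{p-1}}=2^{2^{p-1}}|\L_T(x_p,y_p)|\geq 2^{p-1}|\L_T(x_p,y_p)|$, which is the claimed inequality (and in fact much stronger).

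The main obstacle I anticipate is justifying the ``independent product'' decomposition rigorously: one must argue that choices made in different slabs are genuinely independent, i.e.\ that a tight (resp.\ \projection) geodesic between $a$ and $y_p$ is uniquely determined by its restriction to each slab, that every slab-wise choice assembles to a global tight (resp.\ \projection) geodesic, and that tightness/${}$\projection-ness are local conditions decided slab by slab. The reason this works is the rigidity forced by Theorem~\ref{BGIT}: the large subsurface projections $d_{y_{i}^{c}}(a,\phi_i(a))\gg M$ make each $y_i$ a forced vertex, so the combinatorics between $y_i$ and $y_{i+1}$ is insulated from the rest; for tightness one uses that $F(v_{i-1}\cup v_{i+1})$ at an interior time of a slab sees only the curves inside that slab, and for \projection geodesics one uses Lemma~\ref{weekend} together with the fact that $\Proj_{F(v_{i-1}\cup v_{i+1})}$ of an endpoint again depends only on the local data. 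A secondary, purely arithmetic point is choosing the surface (or the admissible $n$) so that the slab count $\binom{n+1}{2}$ hits the stated value exactly; for a generic surface one would instead state the cleaner bound $|\L_T|=C_S^{2^{p-1}}$ with $C_S$ the per-slab count, and I would note that the displayed formula corresponds to the extremal choice of $n$ in the genus-$2$ case and its analogues. Once the product decomposition is in hand, the rest is the bookkeeping already carried out in Example~\ref{lox3} and Lemma~\ref{hc1}.
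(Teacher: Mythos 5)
Your construction and counting scheme coincide with the paper's own proof (start from Example~\ref{lox3}, set $y_{1}:=\gamma$ and $y_{p}:=\phi_{p-1}(a)$ with $\phi_{p-1}$ supported on $y_{p-1}^{c}$, then argue ``as in Lemma~\ref{hc1}''), but your description of the resulting geometry is incorrect in a way that breaks the distance computation as written. Since $\phi_{p-1}$ acts as an isometry of $C(S)$ fixing $y_{p-1}$, we have $d_{S}(y_{p-1},y_{p})=d_{S}(y_{p-1},a)$, so each step \emph{doubles} the distance: $y_{i}$ sits at time $3\cdot 2^{i-1}$ on a geodesic from $a$ to $y_{p}$, consecutive $y_{i}$'s are $3\cdot 2^{i-2}$ apart, and the $y_{i}$ are not mutually disjoint (e.g.\ $d_{S}(y_{1},y_{3})=9$). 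Your picture of the $y_{i}$ occurring at times $0,3,\dots,3(p-1)$ with length-$3$ gaps between consecutive $y_{i}$ would give $d_{S}(a,y_{p})=3p$, not the claimed $3\cdot 2^{p-1}$. The $2^{p-1}$ length-$3$ slabs are delimited not by the $y_{i}$ alone but by the images of all earlier breakpoints under compositions of the $\phi_{j}$'s: a geodesic from $a$ to $y_{p}$ is a geodesic from $a$ to $y_{p-1}$ followed by the $\phi_{p-1}$--image of a reversed geodesic from $a$ to $y_{p-1}$, and it is this self--similar doubling that produces $2^{p-1}$ forced copies of the Example~\ref{lox3} configuration (each breakpoint forced via Theorem~\ref{BGIT} by the large projection to the corresponding image of some $y_{i}^{c}$). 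Your ``independent product'' decomposition is exactly the right thing to justify, but it must be organized over these breakpoints, not over the $y_{i}$.

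A second gap is in the per--slab TP count. Claim~3 of Example~\ref{lox3} obtains its factor $2$ from $\Proj_{Z_{j}}(\gamma)$, i.e.\ projections of that configuration's \emph{own} endpoint, whereas membership in $\L_{\PP}(x_{p},y_{p})$ only permits replacements drawn from $\Proj_{F(v_{i-1}\cup v_{i+1})}(x_{p}\cup y_{p})$, projections of the \emph{global} endpoints (Definition~\ref{pg}). For an interior slab the far global endpoint $y_{p}$ is not the slab endpoint, so to import the factor $2$ slab by slab you must check that $\pi_{W}(y_{p})$ (for the relevant complementary components $W$) still supplies vertices distinct from the tight boundary choices and in sufficient number; nonemptiness is clear, but the count is not automatic, and this is presumably why the paper asserts only $|\L_{\PP}(x_{p},y_{p})|\geq 2^{p-1}\cdot|\L_{T}(x_{p},y_{p})|$ rather than the factor $2^{2^{p-1}}$ you claim. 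Either supply that verification (Lemma~\ref{weekend} gives the weak tightness, not the count) or weaken your per--slab claim so that it still yields the stated $2^{p-1}$. Finally, $n$ in Example~\ref{lox3} is not a parameter to be tuned to hit the formula: the paper takes $S$ closed and $n=-\chi(S)/2$, which gives the per--slab tight count $\sum_{i=1}^{n}i=\frac{\chi(S)^{2}}{8}-\frac{\chi(S)}{4}$ directly; fixing $S=S_{2,0}$ makes this number $1$ and so does not exhibit the general formula.
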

\begin{proof}
We assume $S$ is closed. We adapt the notations and the setting from Example \ref{lox3}. First we note that $|\L_{T}(a,\gamma)|= \frac{\big(\frac{-\chi(S)}{2} \big)\cdot \big( \frac{-\chi(S)}{2}+1 \big)}{2}=  \frac{\chi(S)^{2}}{8}-\frac{\chi(S)}{4}$ since $n=\frac{-\chi(S)}{2}$ in Example \ref{lox3}. 

We let $x_{p}=a$ and define $y_{p}$ as follows. First $y_{1}:=\gamma$ and we inductively define $y_{p}$; we take a pseudo--Anosov map $\phi_{p-1}$ supported on $y_{p-1}^{c}$ so that $d_{y_{p-1}^{c}}(a,\phi_{p-1}(a))\gg M$, and we define $y_{p}:=\phi_{p-1}(a).$ 
As in the proof of Lemma \ref{hc1}, we have $d_{S}(a,y_{p})=3\cdot 2^{p-1}$, $|\L_{T}(x_{p},y_{p})|=\big( \frac{\chi(S)^{2}}{8}-\frac{\chi(S)}{4} \big)^{2^{p-1}}$, and $|\L_{\PP}(x_{p},y_{p})|\geq 2^{p-1}\cdot |\L_{T}(x_{p},y_{p})| .$

The arguments given here directly apply to general surfaces by modifying Example \ref{lox3} for general surfaces by constructing an appropriate sequence of subsurfaces therein.
\end{proof}



\titleformat{\section}{\large\bfseries}{\appendixname~\thesection .}{0.5em}{}
\begin{appendices}

\section{Expansion on \projection geodesics}\label{PPRR2}
The aim of this section is to expand \projection geodesics by providing methods to produce more curves to be used for replacement of the vertices of a given thick geodesic. To do this, we use some facts about the relationship between intersection numbers of two curves and their subsurface projection distances. The methods add to the curves obtained in $\S\ref{PPRR}$, so the new classes of weak tight geodesics strictly contain \projection geodesics, but we can still control the classes of weak tight geodesics that they are contained in.
Lastly, we note that this section is dedicated to only provide the methods to produce curves to be used for replacement. One can obtain the sets of geodesics using the curves given here as in Definition \ref{pg}.

We start with the following basic fact.
\begin{lemma}\label{four}
Let $x,y\in C(S)$. Let $\mu(n):=2 \cdot n+1.$
\begin{itemize}
\item $d_{S}(x,y)\leq   \mu(i(x,y)).$
\item $d_{Z}(x,y)\leq   \mu(4 \cdot i(x,y)+4)$ for all $Z\subseteq S.$
\end{itemize}
\end{lemma}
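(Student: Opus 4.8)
The plan is to prove both inequalities by relating intersection number to distance in the relevant curve graph via an Euler-characteristic counting argument on a ``surgered'' intermediate curve. For the first bullet, I would take two curves $x,y$ with $i(x,y)=n$ and show that one can build a path of length at most $\mu(n)=2n+1$ from $x$ to $y$ in $C(S)$. The standard trick: if $n=0$ then $d_S(x,y)\le 1$. If $n\ge 1$, perform a surgery on $y$ along $x$: resolve each intersection point of $y$ with $x$ to produce a collection of curves $\{w_1,\dots,w_r\}$, each of which is disjoint from $x$ (so at distance $\le 1$ from $x$) and intersects $y$ fewer times, with $r$ controlled. Iterating, or more cleanly using the classical bound $d_S(x,y)\le 2\log_2 i(x,y)+2$ of Hempel/Bowditch, certainly gives $d_S(x,y)\le 2n+1$ for all $n\ge 1$; in fact the crude surgery bound $d_S(x,y)\le 2 i(x,y)+1$ suffices here and is easier to state, since each surgered intermediate curve is disjoint from $x$ and reduces intersection with $y$ by at least one, giving a path $x, w, y'$-type structure whose total length telescopes to at most $2n+1$. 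So the first bullet is essentially a packaging of a well-known estimate.

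For the second bullet, the key observation is that $d_Z(x,y)$ is computed from the arcs/curves $\pi_Z(x)$ and $\pi_Z(y)$ inside the (possibly annular) surface $Z$, and the intersection number of these projections is controlled by $i(x,y)$. Specifically, I would invoke the standard fact that $i(\pi_Z(x),\pi_Z(y)) \le 2\,i(x,y) + $ (a small additive/multiplicative constant coming from the arcs of $x\cap Z$ and $y\cap Z$ being at most $i(x,\partial Z)$-many and similarly for $y$), which after being generous yields $i(\pi_Z(x),\pi_Z(y)) \le 4\, i(x,y)+4$. Then, since the arc-and-curve graph of $Z$ (and the annular curve graph when $Z$ is an annulus) satisfies the same surgery-based distance-versus-intersection bound as $C(S)$ — namely $d_Z(a,b)\le \mu(i(a,b))=2\,i(a,b)+1$ for vertices $a,b$ — applying the first bullet's estimate inside $Z$ to $a=\pi_Z(x)$, $b=\pi_Z(y)$ gives
\[
d_Z(x,y) \;\le\; \mu\big(i(\pi_Z(x),\pi_Z(y))\big) \;\le\; \mu\big(4\, i(x,y)+4\big),
\]
which is exactly the claim.

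The main obstacle — and the step deserving genuine care rather than a one-line wave — is establishing the bound $i(\pi_Z(x),\pi_Z(y))\le 4\,i(x,y)+4$, and in particular handling the annular case: when $Z$ is an annulus, $\pi_Z(x)$ consists of arcs in the compactified annular cover $S^Z$, and their intersection number there is governed by how $x$ and $y$ wrap around the core of $Z$, which is bounded by $i(x,y)$ plus contributions from $i(x,\partial Z)$ and $i(y,\partial Z)$; one must be careful that these boundary-intersection terms are themselves absorbed into the $i(x,y)$ count (using that $x,y$ fill, or at least essentially intersect, so $\partial Z$-intersections are dominated). I would treat the non-annular case first, where the estimate follows from counting arcs of $x\cap Z$ and $y\cap Z$ and noting each pair of such arcs meets at most once more than in $S$ after the regular-neighborhood construction in Definition \ref{projection}, and then do the annular case separately using the explicit description of $\pi_Z$ via the annular cover. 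The constants $4$ and $4$ are deliberately lossy, which gives room to push the annular bookkeeping through without optimizing.
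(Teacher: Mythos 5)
Your proposal follows essentially the same route as the paper: the first bullet is invoked as the standard well-known intersection-versus-distance bound, and the second bullet is reduced to the count $i(\pi_{Z}(x),\pi_{Z}(y))\leq 4\cdot i(x,y)+4$ (at most four new crossings near each point of $x\cap y$, plus four more arising in the regular neighborhood of $\partial Z$), after which the first-type bound is applied inside $Z$. The only divergence is cosmetic: the paper treats the annular case as the easy one (the lifted arcs in the annular cover meet only at lifts of points of $x\cap y$, so no $i(x,\partial Z)$ bookkeeping or filling hypothesis is needed), whereas you concentrate your care there; with the deliberately lossy constants either accounting goes through.
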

\begin{proof}
The first statement is well--known. The second statement easily follows if $Z$ is an annulus. If $Z$ is not an annulus, it suffices to observe that $$\max\{ i(a,b)|a\in \pi_{Z}(x), b\in \pi_{Z}(y)\} \leq 4\cdot i(x,y)+4.$$ The above inequality holds since $a$ and $b$ intersect at most $4$ times around every intersection of $x$ and $y$, so this can be measured by $4\cdot i(x,y)$, the other $4$ comes from intersections which could possibly occur in the regular neighborhoods of $\partial(Z)$.
\end{proof}

Now, we define the following.
\begin{definition}\label{expdef}
Let $A\subseteq S$. Define $\Exp_{A}^{n}:C(S)\longrightarrow C(S)$, where $$ \Exp_{A}^{n}(x):=  \bigcup_{Y\in A^{nac}} \bigg\{c\in C(Y) \bigg|i(c,b)\leq n \mbox{ for all } b\in \{ \Proj_{A}(x)\cap C(Y) \} \bigg\}.$$
We note that $\{ \Exp_{A}^{n}(x)\setminus \Proj_{A}(x) \}\neq \emptyset$ for a sufficiently large $n$ as far as $x$ essentially intersects with a non--pants component of $A^{nac}$. See Corollary \ref{non--empty}. Let $C\subseteq C(S)$. We define $\Exp_{A}^{n}(C):=\cup_{x\in C}\Exp_{A}^{n}(x).$ 
\end{definition}

We observe 
\begin{lemma}\label{dehntwists}
Let $C\subseteq C(S)$ such that $i(a,b)\leq q$ for all $a,b\in C$. For all $p \in \mathbb{N}$, there exists $\gamma \in C(S)$ such that 
\begin{itemize}
\item $i(\gamma,c)\leq \max\{ q+ p\cdot q^{2}, p \cdot 4\} $ for all $c \in C$.
\item $i(\gamma,c)\geq p$ for some $c \in C.$
\end{itemize}
\end{lemma}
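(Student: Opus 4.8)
The plan is to produce $\gamma$ by starting with a curve in $C$ and applying a suitable power of a Dehn twist (or a partial pseudo-Anosov) along a fixed auxiliary curve, then controlling intersection numbers via the standard estimate $i(T_{\alpha}^{p}(c), c') \leq i(c,c') + p\cdot i(\alpha,c)\cdot i(\alpha,c')$ together with $i(\alpha,c)\leq q$ whenever $\alpha,c\in C$. Concretely, fix $c_{0}\in C$ and pick $\alpha\in C$ (the same set $C$) with $i(\alpha,c_{0})\geq 1$; if no such $\alpha$ exists then all curves in $C$ are equal or disjoint and a separate (easy) argument handles that degenerate case. Set $\gamma := T_{\alpha}^{p}(c_{0})$.

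First I would verify the first bullet. For any $c\in C$ we have, by the twist formula, $i(\gamma,c)=i(T_{\alpha}^{p}(c_{0}),c)\leq i(c_{0},c)+p\cdot i(\alpha,c_{0})\cdot i(\alpha,c)\leq q + p\cdot q\cdot q = q+p\cdot q^{2}$. This already gives the bound $q+p\cdot q^{2}$; the alternative value $p\cdot 4$ in the statement is the sharper estimate available when the curves in $C$ are taken to be at minimal position with respect to a fixed markings/annular system, i.e.\ when $q$ is effectively $1$ or $2$ on the relevant annulus — I would phrase the lemma so that $\max\{q+p\cdot q^{2},\, p\cdot 4\}$ covers both regimes, choosing $\alpha$ (if desired) so that $i(\alpha,c)\leq 2$ for all $c\in C$ when such $\alpha$ is available, which yields $i(\gamma,c)\leq 2+p\cdot 2\cdot 2 = 2+4p$, absorbed into $p\cdot 4$ up to the harmless additive constant (or re-choose the base curve to kill it).

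Next I would establish the second bullet: $i(\gamma,c)\geq p$ for some $c\in C$. Take $c=\alpha$. Then $i(\gamma,\alpha)=i(T_{\alpha}^{p}(c_{0}),\alpha)=i(c_{0},\alpha)\geq 1$, which is \emph{not} what we want — twisting along $\alpha$ does not increase intersection with $\alpha$ itself. So instead I would take $c=c_{0}$: the twist formula gives a \emph{lower} bound $i(T_{\alpha}^{p}(c_{0}),c_{0})\geq p\cdot i(\alpha,c_{0})^{2}-2\cdot i(c_{0},c_{0}) = p\cdot i(\alpha,c_{0})^{2}\geq p$ (using $i(\alpha,c_{0})\geq 1$ and a standard lower bound for Dehn twists, e.g.\ from the fact that $T_{\alpha}^{p}(c_{0})$ and $c_{0}$ run along $\alpha$ with multiplicity roughly $p\cdot i(\alpha,c_{0})$). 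Thus $c_{0}\in C$ witnesses $i(\gamma,c)\geq p$.

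The main obstacle is the degenerate case where no good auxiliary curve $\alpha\in C$ exists — i.e.\ when $C$ consists of pairwise-disjoint (or equal) curves, so every $i(a,b)=0$ and the twist trick produces nothing new inside the intersection pattern of $C$. I expect this to be handled by observing that in that case $C$ lies in a single simplex of $C(S)$, one can twist along a curve $\alpha\in C$ that is \emph{not} disjoint from everything in $S$ is impossible, so instead one twists $c_{0}$ along a curve transverse to $c_{0}$ but this curve need not be in $C$ — here the bound $p\cdot 4$ (coming from $i(\alpha,c)\leq 2$ for a curve $\alpha$ hitting $c_{0}$ at most twice, after isotopy, on the relevant subsurface) is exactly the right estimate, and the lower bound $i(\gamma,c_{0})\geq p$ again comes from the Dehn twist lower bound. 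Cross-checking the two estimates in the $\max$ against these two cases (curves in $C$ mutually intersecting, versus curves in $C$ mutually disjoint) is the one place where I would be careful, and it is also where Lemma \ref{four} is implicitly used to translate intersection bounds into the distance bounds that make these curves usable for the replacement construction of the following section.
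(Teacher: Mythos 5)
Your main case is the paper's Case~1 almost verbatim: when some $a,b\in C$ intersect, set $\gamma=T_{a}^{p}(b)$, use the twist estimate $i(T_{a}^{p}(b),c)\le i(b,c)+p\,i(a,b)\,i(a,c)\le q+p q^{2}$ for the upper bound, and $i(T_{a}^{p}(b),b)=p\,i(a,b)^{2}\ge p$ for the lower bound (your observation that one must test against the base curve rather than the twisting curve is exactly right). Up to relabelling $(\alpha,c_{0})=(a,b)$, this is the same argument.

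The genuine gap is in the degenerate case where $C$ is a simplex, i.e.\ all curves of $C$ are pairwise disjoint, so that $q$ may be $0$ and the bound must be $p\cdot 4$. You correctly identify that one should twist some $c_{0}\in C$ along an auxiliary curve $\alpha\notin C$ with $i(\alpha,c_{0})\le 2$, but you never control $i(T_{\alpha}^{p}(c_{0}),c)$ for the \emph{other} elements $c\in C$: for an arbitrary transverse $\alpha$ this is only bounded by $i(c_{0},c)+p\,i(\alpha,c_{0})\,i(\alpha,c)=2p\,i(\alpha,c)$, which is unbounded unless $i(\alpha,c)$ is itself controlled, and the existence of an $\alpha$ with $i(\alpha,c_{0})\in\{1,2\}$ and $i(\alpha,c)$ small (in fact zero) for every other $c\in C$ is precisely what you defer to ``when such $\alpha$ is available'' and ``the relevant subsurface.'' The paper supplies this step: choose $S'\subseteq S$ with $\xi(S')=1$ (so $S'\cong S_{1,1}$ or $S_{0,4}$) containing $c_{0}$ but no other curve of $C$ (e.g.\ complete the multicurve $C$ to a pants decomposition and take the union of the pants adjacent to $c_{0}$), pick $x\in C(S')$ with $i(x,c_{0})\le 2$, and set $\gamma=T_{x}^{p}(c_{0})$. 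The key point you omit is that $\gamma\in C(S')$, hence $i(\gamma,c)=0$ for all $c\in C\setminus\{c_{0}\}$, while $i(\gamma,c_{0})=p\,i(x,c_{0})^{2}$ is simultaneously $\ge p$ and $\le 4p$; this is what gives the clean bound $\max\{q+pq^{2},\,p\cdot 4\}$ rather than your $2+4p$, which as you note overshoots the stated bound. With that existence-and-disjointness argument added, your proof coincides with the paper's.
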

\begin{proof}
We refer the following inequalities on intersection numbers involving Dehn twists to \cite{FM}.
We consider two cases. 

Case 1: If there exist $a,b \in C$ such that $i(a,b)> 0$ then we have 
\begin{itemize}
\item$p \leq i(b,T_{a}^{p}(b)) =p \cdot i(a,b)^{2}\leq p\cdot  q^{2}.$ 
\item $i(c,T_{a}^{p}(b))\leq i(c,b)+p \cdot i(a,b)\cdot i(c,a)\leq q+ p\cdot  q^{2}$ for all $c \in C\setminus \{b\}.$ 
\end{itemize}
Lastly, we let $\c= T_{a}^{p}(b).$

Case 2: If $C$ is a simplex then we take $a\in C$ so that there exists $S'\subseteq S$ such that $a\in C(S')$, $\x(S')=1$, and $c\notin C(S')$ for all $c\in C \setminus \{a\}$. Then, we can find $x\in C(S')$ such that $i(x,a)\leq 2$ since $S'=S_{1,1}$ or $S_{0,4}.$ We have 
\begin{itemize}
\item $p \leq i(a,T_{x}^{p}(a)) =p \cdot i(a,x)^{2} \leq p \cdot 4.$
\item $i(c,T_{x}^{p}(a))=0$ for all $c \in C\setminus \{a\}$ since $T_{x}^{p}(a)\in C(S').$
\end{itemize}
Lastly, we let $\gamma= T_{x}^{p}(a).$
\end{proof}

We observe 
\begin{corollary}\label{non--empty}
In Definition \ref{expdef}, if $x$ essentially intersects with a non--pants component of $A^{nac}$ then $\{ \Exp_{A}^{n}(x) \setminus \Proj_{A}(x) \} \neq \emptyset$ for all $n\geq 4+5\cdot 4^{2}.$  
\end{corollary}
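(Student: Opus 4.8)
The plan is to produce the required curve inside a single non-annular complementary component, using Lemma \ref{dehntwists} to get a curve that intersects the projection of $x$ a bounded but nonzero amount. Fix a non-pants component $Y\in A^{nac}$ with which $x$ essentially intersects; then $\pi_{Y}(x)\neq\emptyset$. First I would record the bookkeeping fact that $\Proj_{A}(x)\cap C(Y)=\pi_{Y}(x)$: the curves in $\partial(A)$ are disjoint from the interior of $Y$, hence are not vertices of $C(Y)$, and for $Y'\in A^{nac}$ with $Y'\neq Y$ the curves of $\pi_{Y'}(x)$ lie in the distinct complementary component $Y'$, hence are not vertices of $C(Y)$ either. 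So it suffices to find $\gamma\in C(Y)$ with $\gamma\notin\pi_{Y}(x)$ and $i(\gamma,b)\leq n$ for every $b\in\pi_{Y}(x)$.

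Next I would bound the pairwise intersection numbers inside $\pi_{Y}(x)$. Running the estimate from the proof of Lemma \ref{four} with \emph{both} curves equal to $x$ gives $\max\{i(a,b)\mid a,b\in\pi_{Y}(x)\}\leq 4\cdot i(x,x)+4=4$, the only contributions being the $4$ intersections possible in the regular neighborhood of $\partial(Y)$. Now apply Lemma \ref{dehntwists} (carried out inside $Y$, so the resulting curve lies in $C(Y)\subseteq C(S)$) to the set $C=\pi_{Y}(x)$ with $q=4$ and $p=5$: this produces $\gamma\in C(Y)$ with
\[
i(\gamma,c)\leq\max\{\,q+p\cdot q^{2},\,4p\,\}=\max\{4+5\cdot 4^{2},\,20\}=4+5\cdot 4^{2}
\]
for all $c\in\pi_{Y}(x)$, and $i(\gamma,c_{0})\geq p=5$ for some $c_{0}\in\pi_{Y}(x)$.

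Finally I would assemble the conclusion. Since $i(\gamma,c_{0})\geq 5>4$ while every element of $\pi_{Y}(x)$ intersects every other element of $\pi_{Y}(x)$ at most $4$ times, we get $\gamma\notin\pi_{Y}(x)$, hence $\gamma\notin\Proj_{A}(x)$ by the first paragraph; and $\gamma$ is a genuine vertex of $C(S)$ because $Y$ is an essential subsurface. On the other hand $i(\gamma,b)\leq 4+5\cdot 4^{2}\leq n$ for every $b\in\{\Proj_{A}(x)\cap C(Y)\}=\pi_{Y}(x)$, so $\gamma\in\Exp_{A}^{n}(x)$ for all $n\geq 4+5\cdot 4^{2}$. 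Therefore $\gamma\in\{\Exp_{A}^{n}(x)\setminus\Proj_{A}(x)\}$, as desired. The only points needing care—none of them deep—are the identification $\Proj_{A}(x)\cap C(Y)=\pi_{Y}(x)$ and the fact that the curve supplied by Lemma \ref{dehntwists} can be taken to lie in $C(Y)$; the choice $p=q+1$ is exactly what forces $\gamma$ out of $\pi_{Y}(x)$ while keeping the intersection bound at the advertised value $4+5\cdot 4^{2}$.
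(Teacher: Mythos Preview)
Your proof is correct and follows essentially the same approach as the paper: pick a non-pants $Y\in A^{nac}$ that $x$ intersects, use the estimate from Lemma \ref{four} to bound pairwise intersections in $\pi_{Y}(x)$ by $4$, and apply Lemma \ref{dehntwists} with $p=5$ to produce $\gamma\in C(Y)$ outside $\pi_{Y}(x)$ yet with intersection at most $4+5\cdot 4^{2}$ with every element of $\pi_{Y}(x)$. You have simply made explicit the bookkeeping (the identification $\Proj_{A}(x)\cap C(Y)=\pi_{Y}(x)$ and the reason $\gamma\notin\pi_{Y}(x)$) that the paper leaves implicit.
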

\begin{proof}
Take a non--pants component $Y\in A^{nac}$ which $x$ essentially intersects with. By the proof of Lemma \ref{four}, it follows that $i(a,b)\leq 4$ for all $a,b \in \pi_{Y}(x)$. Now, we take $\gamma \in C(Y)$ given by Lemma \ref{dehntwists} when $p=5$, so that we can guarantee $\gamma \in \{ \Exp_{A}^{n}(x) \setminus \Proj_{A}(x) \}.$
\end{proof}

Now, we prove the following lemma which is analogous to Lemma \ref{weekend}.

\begin{lemma}\label{weakenextra2}
Let $x,y\in C(S)$ such that $d_{S}(x,y)>2$. Let $\{x=v_{0}, v_{1}, \cdots ,v_{d_{S}(x,y)}=y\}$ be a thick geodesic connecting $x$ and $y$. Let $n\in \mathbb{N}.$ The following holds for all $i$:  
\begin{itemize}
\item Replacing $v_{i}$ by any subset of $\Exp_{F(v_{i-1}\cup v_{i+1})}^{n}( x \cup y ) $ gives a new (possibly same) thick geodesic.
\item If $\c \in \Exp_{F(v_{i-1}\cup v_{i+1})}^{n}( x \cup y ) $ and $\pi_{Z}(\gamma)\neq \emptyset$ where $Z\subsetneq S$, then $d_{Z}(x,\gamma)\leq \max\{ M, \mu(4n+5)\} \text{ or }d_{Z}(\gamma,y)\leq \max\{ M,\mu(4n+5)\}.$
\end{itemize}
\end{lemma}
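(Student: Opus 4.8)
The first bullet is immediate: any $\gamma \in \Exp_{F(v_{i-1}\cup v_{i+1})}^{n}(x\cup y)$ lies in some non-annular complementary component $Y$ of $F(v_{i-1}\cup v_{i+1})$, so it is disjoint from $v_{i-1}\cup v_{i+1}$ and the same argument as in Lemma \ref{weekend} shows replacing $v_i$ by a subset of these curves produces a new thick geodesic. For the second bullet, I would run the same case split on whether $Z$ sees $v_{i-1}$ or $v_{i+1}$ as in the proofs of Lemma \ref{weekend} and Lemma \ref{weaken}.

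\textbf{Case 2} ($\pi_Z(v_{i-1})\neq\emptyset$ or $\pi_Z(v_{i+1})\neq\emptyset$): here $\gamma$ is one of $v_{i-1}, v_{i+1}$, or is disjoint from one of them, so (as in Lemma \ref{weakenextra2}'s predecessors) a sequence of consecutive multicurves of the given thick geodesic all project nontrivially to $Z$ and are disjoint from $\gamma$ or contain it; Theorem \ref{BGIT} then gives $d_Z(x,\gamma)\leq M$ or $d_Z(\gamma,y)\leq M$, whichever endpoint is on the far side. This contributes the $M$ term.

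\textbf{Case 1} ($\pi_Z(v_{i-1})=\pi_Z(v_{i+1})=\emptyset$): then $Z\subseteq W$ for some $W\in (F(v_{i-1}\cup v_{i+1}))^{nac}$, and since $\pi_Z(\gamma)\neq\emptyset$ we in fact have $W = Y$, the component containing $\gamma$. Say $\gamma\in \Exp_{F(v_{i-1}\cup v_{i+1})}^{n}(x)$, i.e. $i(\gamma, b)\leq n$ for all $b\in \{\pi_W(x)\cap C(W)\} = \pi_W(x)$ (recalling $\Proj_A(x)\cap C(W) = \pi_W(x)$ for this $W$). Pick any such $b$. If $Z=W$ then $\gamma$ and $b$ live in $C(W)=C(Z)$ with $i(\gamma,b)\leq n$, so $d_Z(\gamma,b)\leq \mu(n)\leq \mu(4n+5)$ by Lemma \ref{four}; combined with $d_Z(x,b)\leq 2$ (as in Lemma \ref{weekend}, Case 1, since $b\in\pi_W(x)$) this gives $d_Z(x,\gamma)\leq \mu(4n+5)+2$. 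If $Z\subsetneq W$, then both $\gamma$ and $b$ project to $C(Z)$ and Lemma \ref{four} gives $d_Z(\gamma,b)\leq \mu(4\,i(\gamma,b)+4)\leq \mu(4n+4)$; again $d_Z(x,b)\leq 2$ by the argument of Lemma \ref{weekend} (treating the annular and non-annular cases for $Z$ separately as there), so $d_Z(x,\gamma)\leq \mu(4n+4)+2 \leq \mu(4n+5)$. Taking the maximum with the Case 2 bound of $M$ and absorbing the small additive slack into the stated $\mu(4n+5)$ (using $\mu(4n+5)=\mu(4n+4)+2$) yields the claim.

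\textbf{Main obstacle.} The only delicate point is bookkeeping the additive constants so that everything fits under the clean bound $\max\{M,\mu(4n+5)\}$ — in particular checking that the "$+2$" coming from $d_Z(x,b)\leq 2$ is exactly accounted for by the difference $\mu(4n+5)-\mu(4n+4)$, and verifying the $Z=W$ versus $Z\subsetneq W$ split (and the annular subcase of the latter) really does stay below $\mu(4n+5)$ rather than $\mu(4n+4)$. Everything else is a routine repackaging of Lemma \ref{weekend} and Lemma \ref{four}.
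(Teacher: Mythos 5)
Your proposal is correct and follows essentially the same route as the paper's proof: the first bullet is dismissed as obvious, Case 2 is handled by the Bounded Geodesic Image Theorem exactly as in Lemma \ref{weekend}, and Case 1 combines $d_{Z}(x,\pi_{W}(x))\leq 2$ with the intersection-number bound of Lemma \ref{four}. The only wrinkle is your $Z=W$ subcase, where the bound as written ($\mu(4n+5)+2$) overshoots the target; this is harmless since there the sharper estimate $\mu(n)+2\leq\mu(4n+5)$ applies, and in fact the paper avoids the split altogether by using the second bullet of Lemma \ref{four} uniformly for all $Z\subseteq W$, giving $2+\mu(4n+4)=\mu(4n+5)$ exactly.
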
 
\begin{proof}
The first statement is obvious. For the second statement, we assume $\c \in  \Exp_{F(v_{i-1}\cup v_{i+1})}^{n}(x)$.

Case 1: Suppose $\pi_{Z}(v_{i-1})= \emptyset$ and $\pi_{Z}(v_{i+1})= \emptyset$. Then $Z \subseteq W \in (F(v_{i-1}\cup v_{i+1}))^{nac}$. By using Lemma \ref{weekend} and Lemma \ref{four} we have $$d_{Z}(x,\c)\leq d_{Z}(x,\pi_{W}(x))+d_{Z}(\pi_{W}(x),\gamma)\leq 2+\mu(4n+4)\leq \mu(4n+5).$$

Case 2: Suppose $\pi_{Z}(v_{i-1})\neq \emptyset$ or $\pi_{Z}(v_{i+1})\neq \emptyset$. As before, we have $d_{Z}(x,\gamma)\leq M$ or $d_{Z}(\gamma,y)\leq M.$
\end{proof}

As before, we can easily generalize Lemma \ref{weakenextra2}. 

\begin{lemma}\label{hig}
Let $x,y\in C(S)$ such that $d_{S}(x,y)>2$. Let $\{x=v_{0}, v_{1}, \cdots ,v_{d_{S}(x,y)}=y\}$ be a thick geodesic connecting $x$ and $y$. Let $n\in \mathbb{N}.$ The following holds for all $i$:  
\begin{itemize}
\item Replacing $v_{i}$ by any subset of $\Exp_{F(v_{i-1}\cup v_{i+1})}^{n} \big( N_{i-1}(x) \cup N_{d_{S}(x,y)-i-1}(y) \big)$ gives a new (possibly same) thick geodesic.
\item If $\c \in \Exp_{F(v_{i-1}\cup v_{i+1})}^{n}\big( N_{i-1}(x) \cup N_{d_{S}(x,y)-i-1}(y) \big) $ and $\pi_{Z}(\gamma)\neq \emptyset$ where $Z\subsetneq S$, then $d_{Z}(x,\gamma)\leq M+ \mu(4n+5) \text{ or }d_{Z}(\gamma,y)\leq M+ \mu(4n+5).$
\end{itemize}

\end{lemma}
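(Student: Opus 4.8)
The statement is the ``ball version'' of Lemma \ref{weakenextra2}, standing to it exactly as Lemma \ref{weaken} stands to Lemma \ref{weekend}. So the plan is to mimic the proof of Lemma \ref{weaken} verbatim, with Lemma \ref{weekend} replaced by Lemma \ref{weakenextra2} in the interior (Case 1) argument. Concretely, the first bullet is immediate: any element of $\Exp_{F(v_{i-1}\cup v_{i+1})}^{n}(c)$ lies in $(F(v_{i-1}\cup v_{i+1}))^{c}$, hence is disjoint from $v_{i-1}$ and $v_{i+1}$ and is at distance $1$ from each, so swapping it for $v_{i}$ keeps the sequence a thick geodesic; the ball decoration on the curves being projected from does not affect this. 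For the second bullet, write $\gamma\in\Exp_{F(v_{i-1}\cup v_{i+1})}^{n}(c)$ with $c\in N_{i-1}(x)$ (the case $c\in N_{d_{S}(x,y)-i-1}(y)$ is symmetric and yields the $d_{Z}(\gamma,y)$ conclusion), and fix $Z\subsetneq S$ with $\pi_{Z}(\gamma)\neq\emptyset$.

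\textbf{Case 1: $\pi_{Z}(v_{i-1})=\emptyset$ and $\pi_{Z}(v_{i+1})=\emptyset$.} Then $Z\subseteq W\in (F(v_{i-1}\cup v_{i+1}))^{nac}$, and since $\pi_{Z}(v_{i+1})=\emptyset$ while $v_{i+1}$ and $x$ lie on a geodesic $\{x=v_0,v_1,\dots\}$ with all intermediate vertices having nonempty projection to $Z$ up to $v_{i}$ — more carefully, because $c$ and $v_{i+1}$ fill $S$ and hence $\pi_Z(c)\neq\emptyset$, so the subgeodesic from $c$ (or from a geodesic joining $x$ to $c$) to $v_{i+1}$ together with Theorem \ref{BGIT} gives $d_{Z}(x,c)\leq M$. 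Now apply Lemma \ref{weakenextra2} inside $W$ with base curve $c$: since $\gamma\in \Exp_{F(v_{i-1}\cup v_{i+1})}^{n}(c)$, its proof (Case 1 of Lemma \ref{weakenextra2}) gives $d_{Z}(c,\gamma)\leq \mu(4n+5)$. Adding, $d_{Z}(x,\gamma)\leq d_{Z}(x,c)+d_{Z}(c,\gamma)\leq M+\mu(4n+5)$, as claimed.

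\textbf{Case 2: $\pi_{Z}(v_{i-1})\neq\emptyset$ or $\pi_{Z}(v_{i+1})\neq\emptyset$.} Then $Z$ meets the original thick geodesic $\{v_j\}$ nontrivially at an adjacent vertex, so one of the two tails of the geodesic has all its vertices projecting nontrivially to $Z$; Theorem \ref{BGIT} applied to that tail gives $d_{Z}(x,v_i)\leq M$ or $d_{Z}(v_i,y)\leq M$, and since $\gamma$ is disjoint from $v_{i-1},v_{i+1}$ one concludes (exactly as in Case 2 of Lemmas \ref{weaken} and \ref{weakenextra2}) $d_{Z}(x,\gamma)\leq M$ or $d_{Z}(\gamma,y)\leq M$, which is stronger than what is asserted.

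\textbf{Main obstacle.} There is no serious obstacle; the only point requiring a moment of care is the bookkeeping in Case 1 that produces the $M$ summand rather than the $M+2$ of Lemma \ref{weaken} — here the ``$+2$'' that appeared in Lemma \ref{weaken} is absorbed into the $\mu(4n+5)$ term, since Lemma \ref{weakenextra2}'s Case 1 already yields $d_Z(c,\gamma)\le \mu(4n+5)=2+\mu(4n+4)$ and the extra $2$ from moving the base curve $c$ to $x$ within $W$ is handled by the $d_Z(x,c)\le M$ estimate exactly as in Lemma \ref{weaken}. One should double-check that the two distinct ``$+2$'' contributions (annular-projection slack from Lemma \ref{weekend}, and the ball radius slack from Theorem \ref{BGIT}) are not both needed simultaneously; tracing the inequalities shows the worst case is $M$ (from the tail estimate) plus $\mu(4n+5)$ (from the $\Exp$ estimate inside $W$), giving the stated bound.
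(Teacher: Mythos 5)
Your proposal is correct and is essentially the paper's own argument: the paper proves Lemma \ref{hig} by exactly the combination you describe, namely the ball/BGIT estimate $d_{Z}(x,c)\leq M$ from the proof of Lemma \ref{weaken} added to the $\Exp$ estimate $d_{Z}(c,\gamma)\leq 2+\mu(4n+4)\leq\mu(4n+5)$ from the proof of Lemma \ref{weakenextra2} in Case 1, with Case 2 handled by BGIT as before. Your bookkeeping of the two ``$+2$'' contributions matches the stated bound $M+\mu(4n+5)$, so no further changes are needed.
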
 
\begin{proof}
It follows by the arguments given in the proofs of Lemma \ref{weaken} and Lemma \ref{weakenextra2}.
\end{proof}

\begin{remark}\label{SCB}
We can obtain classes of weak tight geodesics using Lemma \ref{weakenextra2} and Lemma \ref{hig}, which we respectively denote by $\L_{\PP(n)}(x,y)$ and $\L_{\EE(n)}(x,y)$, as we defined \projection geodesics in Definition \ref{pg}.
\end{remark}

We have 
\begin{corollary}\label{stbk}
Let $x,y\in C(S)$ such that $d_{S}(x,y)>2$. 
\begin{itemize}
\item $\L_{T}(x,y)\subseteq \L_{\PP}(x,y)\subseteq \L_{\PP(n)}(x,y)\subseteq \L_{WT}^{\max\{M, \mu(4n+5)\}}(x,y).$
\item $\L_{T}(x,y)\subseteq \L_{\PP}(x,y)\subseteq  \L_{\EE}(x,y) \subseteq \L_{\EE(n)}(x,y)\subseteq \L_{WT}^{M+ \mu(4n+5)}(x,y).$
\end{itemize}
\end{corollary}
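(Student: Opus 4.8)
\textbf{Proof proposal for Corollary \ref{stbk}.}

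The plan is to chain together the inclusions that have already been established, noting that Corollary \ref{stbk} is essentially a bookkeeping consequence of the lemmas proved in this appendix together with Theorem \ref{M-weak}. First I would record that the leftmost inclusions $\L_{T}(x,y)\subseteq \L_{\PP}(x,y)$ and $\L_{T}(x,y)\subseteq \L_{\PP}(x,y)\subseteq \L_{\EE}(x,y)$ are exactly the content of Theorem \ref{M-weak}, so nothing new is needed there. The genuinely new content is the two middle inclusions, $\L_{\PP}(x,y)\subseteq \L_{\PP(n)}(x,y)$ and $\L_{\EE}(x,y)\subseteq \L_{\EE(n)}(x,y)$, and the two rightmost inclusions into the $D$--weakly tight classes with $D=\max\{M,\mu(4n+5)\}$ and $D=M+\mu(4n+5)$ respectively.

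For the middle inclusions I would argue, just as in the sentence following Theorem \ref{M-weak} and in the proof of its second bullet, that at every replacement step the curves used to build a $\PP$--geodesic (resp. $\EE$--geodesic) form a subset of the curves allowed when building a $\PP(n)$--geodesic (resp. $\EE(n)$--geodesic): concretely, $\Proj_{F(v_{i-1}\cup v_{i+1})}(x\cup y)\subseteq \Exp_{F(v_{i-1}\cup v_{i+1})}^{n}(x\cup y)$ and likewise with the $N_{\bullet}$--balls, since every $b\in \Proj_{A}(x)\cap C(Y)$ satisfies $i(b,b)=0\leq n$, so $b$ itself lies in $\Exp_{A}^{n}(x)$. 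Hence any composition of the replacement procedures $\PP_{d_{S}(x,y)}\circ\cdots\circ\PP_{1}$ applied to a thick geodesic is realized by the corresponding composition built from the $\Exp$--procedures, and intersecting with $\L(x,y)$ preserves the containment. The same verbatim argument handles the $\EE$ versus $\EE(n)$ case using the $N_{i-1}(x)\cup N_{d_{S}(x,y)-i-1}(y)$ version of the sets.

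For the rightmost inclusions I would invoke the second bullet of Lemma \ref{weakenextra2} (for $\PP(n)$) and of Lemma \ref{hig} (for $\EE(n)$): every vertex $\gamma$ appearing in a $\PP(n)$--geodesic (resp. $\EE(n)$--geodesic) arises as an element of some $\Exp_{F(v_{i-1}\cup v_{i+1})}^{n}(x\cup y)$ (resp. $\Exp_{F(v_{i-1}\cup v_{i+1})}^{n}(N_{i-1}(x)\cup N_{d_{S}(x,y)-i-1}(y))$) at the stage it is introduced, and the subsurface $F$ one uses at that stage is $F(v_{i-1}\cup v_{i+1})$ for the (possibly already modified) adjacent vertices — but the key point, as in the proof of Theorem \ref{M-weak}, is that these lemmas apply to \emph{any} thick geodesic connecting $x$ and $y$, so the bound persists through the iteration. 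Thus for every $\gamma$ on the final geodesic and every proper subsurface $Z$ with $\pi_{Z}(\gamma)\neq\emptyset$ we get $d_{Z}(x,\gamma)\leq \max\{M,\mu(4n+5)\}$ or $d_{Z}(\gamma,y)\leq \max\{M,\mu(4n+5)\}$ (resp. with $M+\mu(4n+5)$), which is precisely the definition of being $D$--weakly tight for the stated $D$. The only mild subtlety — and the one place worth stating carefully rather than waving at — is that when we replace $v_i$ we are using $F(v_{i-1}\cup v_{i+1})$ for the current, already-replaced neighbors, so one must observe (exactly as in the remark after Definition \ref{pg} and in the proof of Lemma \ref{weaken}) that the relevant lemma is applied afresh to the current thick geodesic at each stage; since each lemma is stated for an arbitrary thick geodesic between $x$ and $y$, no compounding of constants occurs and the single bound $D$ suffices. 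I do not expect a serious obstacle here; the work is entirely in assembling the already-proven pieces in the right order.
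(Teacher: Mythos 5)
Your overall architecture is the intended one: the corollary is bookkeeping, with the left inclusions from Theorem \ref{M-weak}, the middle inclusions from containment of the replacement sets, and the right inclusions from the second bullets of Lemma \ref{weakenextra2} and Lemma \ref{hig}. But your justification of the middle inclusions contains a genuine error. You claim $\Proj_{A}(x\cup y)\subseteq \Exp_{A}^{n}(x\cup y)$ because ``every $b\in\Proj_{A}(x)\cap C(Y)$ satisfies $i(b,b)=0\leq n$.'' Membership of $c$ in $\Exp_{A}^{n}(x)$ requires $i(c,b)\leq n$ for \emph{all} $b\in\Proj_{A}(x)\cap C(Y)$, not merely for $b=c$; the correct input is that distinct elements of $\pi_{Y}(x)$ intersect at most $4$ times (the proof of Lemma \ref{four} with $i(x,x)=0$, as used in Corollary \ref{non--empty}), which gives the containment of the projection part only when $n\geq 4$. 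More seriously, by Definition \ref{expdef} the set $\Exp_{A}^{n}(x)$ consists only of curves lying in the non--annular complementary components $Y\in A^{nac}$, so it never contains $\partial(A)$. Under your literal reading, the tightened vertices $\partial(F(v_{i-1}\cup v_{i+1}))$ are not available for replacement in the $\PP(n)$--construction, so your argument cannot even recover $\L_{T}(x,y)\subseteq\L_{\PP(n)}(x,y)$, and the asserted set containment is simply false.

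The repair is to read Remark \ref{SCB} together with the preamble of Appendix \ref{PPRR2} (``the methods \emph{add to} the curves obtained in \S\ref{PPRR}''): the classes $\L_{\PP(n)}(x,y)$ and $\L_{\EE(n)}(x,y)$ are built by replacing $v_{i}$ with subsets of $\Proj_{F(v_{i-1}\cup v_{i+1})}(\,\cdot\,)\cup\Exp_{F(v_{i-1}\cup v_{i+1})}^{n}(\,\cdot\,)$, not of $\Exp^{n}$ alone. With that reading the middle inclusions are immediate by construction (every $\PP$-- or $\EE$--replacement is in particular an allowed replacement for the enlarged class), and the rightmost inclusions follow by combining the bounds for the two kinds of replacement curves: Lemma \ref{weekend} (bound $M$) together with Lemma \ref{weakenextra2} (bound $\max\{M,\mu(4n+5)\}$) gives $\L_{\PP(n)}(x,y)\subseteq\L_{WT}^{\max\{M,\mu(4n+5)\}}(x,y)$, and Lemma \ref{weaken} (bound $M+2$) together with Lemma \ref{hig} (bound $M+\mu(4n+5)$) gives $\L_{\EE(n)}(x,y)\subseteq\L_{WT}^{M+\mu(4n+5)}(x,y)$, since in each case the larger constant absorbs the smaller. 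Your remark that the lemmas apply afresh to each intermediate thick geodesic, so no compounding of constants occurs, is correct and is the right thing to say for the iteration; it is the set--containment step that needs the fix above.
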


By combining Example \ref{lox} and Corollary \ref{non--empty}, we have
\begin{corollary}
For all $n\geq 4+5\cdot 4^{2}$, there exist $x,y\in C(S)$ such that $d_{S}(x,y)>2$ and that $\{ \L_{\PP(n)}(x,y) \setminus \L_{\PP}(x,y) \}\neq \emptyset$ and $\{ \L_{\EE(n)}(x,y) \setminus \L_{\EE}(x,y) \}\neq \emptyset$.
\end{corollary}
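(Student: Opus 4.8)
The plan is to take $(x,y)=(a,\gamma)$ to be the pair from Example~\ref{lox}, with the single extra requirement — compatible with, and indeed typical among, the choices allowed there ($d_{b^{c}}(a,\gamma)\gg M$) — that $\gamma$ moreover \emph{fill} $b^{c}$. Recall from Example~\ref{lox} that then $d_{S}(a,\gamma)=3>2$, that every geodesic from $a$ to $\gamma$ carries $b$ at time $2$, that $\L_{T}(a,\gamma)$ consists of the single geodesic $\{a,\partial(F(a\cup b)),b,\gamma\}$, and that $\L_{\PP}(a,\gamma)=\{\{a,u,b,\gamma\}\mid u\in\Proj_{F(a\cup b)}(\gamma)\}$. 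Write $Y$ for a non--pants component of $(F(a\cup b))^{nac}$ that $\gamma$ essentially meets; such a $Y$ exists by the construction of Example~\ref{lox} (it supports the curves far from $\pi_{Y}(a)$ used there), and $\gamma$ meets it essentially because $\gamma$ fills $b^{c}\supseteq Y$.

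First I would produce the witness geodesic. By Corollary~\ref{non--empty}, applicable since $\gamma$ essentially meets the non--pants component $Y$ of $(F(a\cup b))^{nac}$, for every $n\ge 4+5\cdot 4^{2}$ there is a curve $\gamma'\in\Exp_{F(a\cup b)}^{n}(\gamma)\setminus\Proj_{F(a\cup b)}(\gamma)$, and by construction $\gamma'\in C(Y)$; since $a,b\subseteq F(a\cup b)$ are disjoint from $Y$, the sequence $g':=\{a,\gamma',b,\gamma\}$ is a genuine geodesic in $\L(a,\gamma)$. To see $g'\in\L_{\PP(n)}(a,\gamma)$: starting from the (unique) tight geodesic, apply $\PP(n)_{1}$ to replace the time--$1$ vertex $\partial(F(a\cup b))$ by $\gamma'$ — legitimate since $\gamma'\in\Exp_{F(a\cup b)}^{n}(\gamma)\subseteq\Exp_{F(a\cup b)}^{n}(a\cup\gamma)$, by Lemma~\ref{weakenextra2} and the definition of $\L_{\PP(n)}$ (Remark~\ref{SCB}, modelled on Definition~\ref{pg}) — and then apply the remaining $\PP(n)_{i}$ retaining $b$ at time $2$ and $\gamma$ at time $3$, exactly as $b$ is retained in the computation of $\L_{\PP}(a,\gamma)$ in Example~\ref{lox} (the retention of $b$ at time $2$ uses the tightening part of the replacement step, since $\Exp_{A}^{n}$ itself omits $\partial(A)$). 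The identical argument with $N_{0}(a)\cup N_{1}(\gamma)$ in place of $a\cup\gamma$ and $\EE(n)_{i}$ in place of $\PP(n)_{i}$ (using Lemma~\ref{hig}) gives $g'\in\L_{\EE(n)}(a,\gamma)$.

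It remains to check $g'\notin\L_{\PP}(a,\gamma)$ and $g'\notin\L_{\EE}(a,\gamma)$. The first is immediate: by Example~\ref{lox} the time--$1$ vertex of any element of $\L_{\PP}(a,\gamma)$ lies in $\Proj_{F(a\cup b)}(\gamma)$, whereas the time--$1$ vertex of $g'$ is $\gamma'\notin\Proj_{F(a\cup b)}(\gamma)$; hence $\{\L_{\PP(n)}(a,\gamma)\setminus\L_{\PP}(a,\gamma)\}\neq\emptyset$. For the second I would show $\L_{\EE}(a,\gamma)=\L_{\PP}(a,\gamma)$ \emph{for this particular $\gamma$}: because $\gamma$ fills $b^{c}$, the only essential curve of $S$ disjoint from $\gamma$ is $b$ (up to isotopy), so $N_{1}(\gamma)=\{\gamma,b\}$, and since $a,b\subseteq F(a\cup b)$,
\[\Proj_{F(a\cup b)}\big(N_{0}(a)\cup N_{1}(\gamma)\big)=\partial(F(a\cup b))\cup\Proj_{F(a\cup b)}(\gamma)=\Proj_{F(a\cup b)}(\gamma),\]
and similarly the step producing the time--$2$ vertex still forces $b$; so the procedures defining $\L_{\EE}$ and $\L_{\PP}$ yield the same set of geodesics between $a$ and $\gamma$. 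Therefore $g'\notin\L_{\EE}(a,\gamma)$, while $g'\in\L_{\EE(n)}(a,\gamma)$, giving $\{\L_{\EE(n)}(a,\gamma)\setminus\L_{\EE}(a,\gamma)\}\neq\emptyset$. The main obstacle is exactly these two verifications: that the replacement procedure can insert $\gamma'$ at time $1$ while still legitimately retaining $b$ at time $2$ within $\L_{\PP(n)}$ (resp.\ $\L_{\EE(n)}$), and the identity $\L_{\EE}(a,\gamma)=\L_{\PP}(a,\gamma)$ — the latter being where the hypothesis that $\gamma$ fills $b^{c}$ (rather than merely $d_{b^{c}}(a,\gamma)\gg M$) is essential. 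Both are routine in the spirit of Example~\ref{lox}, but they are the points that need care.
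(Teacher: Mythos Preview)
Your approach for the $\L_{\PP}$ part is correct and is exactly what the paper has in mind: Example~\ref{lox} pins down $\L_{\PP}(a,\gamma)$ completely, and Corollary~\ref{non--empty} supplies a $\gamma'\in\Exp^{n}_{F(a\cup b)}(\gamma)\setminus\Proj_{F(a\cup b)}(\gamma)$ which yields the witness geodesic.

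There is a genuine error in your treatment of the $\L_{\EE}$ part. Your ``extra requirement'' that the single curve $\gamma$ fill $b^{c}$ is impossible whenever $\xi(S)>1$: the subsurface filled by one simple closed curve is just an annulus, so $\gamma$ never fills $b^{c}$, and $N_{1}(\gamma)$ is always infinite (for instance on $S_{2,0}$ one has $\gamma^{c}\cong S_{1,2}$). Thus the equality $N_{1}(\gamma)=\{\gamma,b\}$ on which your argument for $\L_{\EE}(a,\gamma)=\L_{\PP}(a,\gamma)$ rests simply cannot be arranged.

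The good news is that the identity $\L_{\EE}(a,\gamma)=\L_{\PP}(a,\gamma)$ holds \emph{without} any extra hypothesis on $\gamma$, so your overall strategy survives. The point is not that $N_{1}(\gamma)$ is small, but that it projects trivially: since $d_{S}(a,\gamma)=3$, the arcs $\{\gamma\cap Y\}$ fill each $Y\in (F(a\cup b))^{nac}$ --- otherwise some $d\in C(Y)$ misses $\gamma$, and as $d$ also misses $a\subseteq F(a\cup b)$ we would get $d_{S}(a,\gamma)\le 2$ (this is exactly the observation used in Corollary~\ref{sincl2}). Consequently every $c\in N_{1}(\gamma)\setminus\{\gamma\}$ meets $Y$ only in inessential arcs, so $\pi_{Y}(c)=\emptyset$ and hence
\[
\Proj_{F(a\cup b)}\big(N_{0}(a)\cup N_{1}(\gamma)\big)=\partial(F(a\cup b))\cup\pi_{Y}(\gamma)=\Proj_{F(a\cup b)}(\gamma).
\]
At time $2$ nothing changes since $b$ is forced. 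So replace your impossible filling hypothesis with this automatic arc--filling observation and the rest of your argument goes through.
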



\section{Narrow gaps of weak tight geodesics}\label{NARROW}
The aim of this section is to consider the gap between a general pair of classes of weak tight geodesics. For this, we only deal with special pairs of curves such that Corollary \ref{badpair2} holds when $i=1$ or $i=d_{S}(x,y)-1$. On these special pairs of curves, replacement procedure works effectively to guarantee the existence of geodesics which live in narrow gaps of weak tight geodesics between them. 





First, we observe the following.
\begin{lemma}\label{sincl}
Let $W\subsetneq S$ such that $\x(W)>0$. Let $y\in C(S)$ such that the arcs obtained by $\{y\cap W\}$ fill $W$. There exists $\gamma \in C(W)$ such that $$k-1\leq \max_{Z\subseteq W}d_{Z}(\gamma, y) \leq k+1$$ for all $k\in \mathbb{N}_{\geq \4}$.
\end{lemma}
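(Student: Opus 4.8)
The plan is to produce $\gamma$ by a controlled modification of the arc system $\{y\cap W\}$ inside $W$ via high powers of Dehn twists, calibrated so that the subsurface projection distances to $y$ are all comparable to $k$. First I would fix a pair of arcs (or a single filling arc, depending on $\x(W)$) among $\{y\cap W\}$ whose union already fills $W$ — this is possible since by hypothesis the full collection $\{y\cap W\}$ fills $W$, and filling is witnessed by a finite subcollection. Call the resulting filling multiarc $\beta$. Since $\beta$ fills $W$, any curve obtained from $\beta$ by twisting will still have projection to every $Z\subseteq W$ nonempty, so all the distances $d_Z(\gamma,y)$ are defined and the max over $Z\subseteq W$ is finite; this finiteness is exactly the content invoked in the proof of the lemma that every geodesic is $D$--weakly tight for some $D$ (via \cite{BBF}, \cite{MM2}).

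The core of the argument is the calibration. I would pick a curve $c\in C(W)$ and consider $\gamma_p := T_c^{\,p}(\delta)$ for a suitable curve or arc $\delta$ with $i(c,\delta)\ge 1$, in the spirit of Lemma \ref{dehntwists} and the intersection-number estimates of Lemma \ref{four}. Two competing effects must be balanced. On one hand, the behrnd/Masur--Minsky twisting estimate gives $d_{\langle c\rangle}(\gamma_p, \cdot)$ growing linearly in $p$, so for the annular subsurface around $c$ the projection distance to $y$ is essentially $p$ up to an additive constant depending only on the initial configuration; this forces $\max_{Z\subseteq W} d_Z(\gamma_p,y)\ge p - O(1)$. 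On the other hand, Lemma \ref{four} bounds $d_Z(\gamma_p, y)$ above by $\mu$ of a linear function of $i(\gamma_p,y)$, and $i(\gamma_p,y)$ itself grows only linearly in $p$ (again by the twist intersection estimates), which a priori gives only $\max_Z d_Z(\gamma_p,y) = O(p)$ with a multiplicative constant, not $p+O(1)$. To upgrade the upper bound to $p+1$ one needs a more careful analysis: use the Bounded Geodesic Image Theorem (Theorem \ref{BGIT}) together with the observation that for any $Z\subseteq W$ not equal to the annulus about $c$, a geodesic in $C(W)$ from $\pi_W(y)$ to $\gamma_p$ can be arranged to pass near $\delta$ and near $c$, so $Z$ either sees $c$ or sees $\delta$, bounding $d_Z(\gamma_p,y)$ by $M$ plus a constant independent of $p$. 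Thus the only subsurface realizing a large projection distance is the annulus $\langle c\rangle$, where the distance is $p$ up to $\pm 1$. Choosing $p$ so that $d_{\langle c\rangle}(\gamma_p,y)$ lands in $\{k-1,k,k+1\}$ — possible since consecutive twists change this distance by a bounded amount and it tends to infinity — yields the claimed two-sided bound $k-1\le \max_{Z\subseteq W} d_Z(\gamma,y)\le k+1$, valid once $k$ is at least the relevant additive constant, which is the role of the hypothesis $k\ge \4$.

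The main obstacle I anticipate is precisely the upper bound $\max_{Z\subseteq W} d_Z(\gamma,y)\le k+1$: one must ensure that \emph{no} subsurface $Z\subseteq W$, in particular no ``partially twisted'' proper subsurface or competing annulus, accumulates a projection distance that grows with $p$ faster than, or independently of, the one annulus we are using as our dial. This requires either choosing $\delta$ and $c$ so that $c$ is non-separating in $W$ and $\delta$ fills $W\setminus c$ (so that the twisting is ``localized'' to $\langle c\rangle$ as far as large projections are concerned), or invoking a clean statement about projections of Dehn twist images — e.g. that $d_Z(T_c^p(\delta),\delta)$ is bounded independently of $p$ whenever $Z$ is not nested in the annulus about $c$, which follows from BGIT applied to a geodesic staying uniformly close to $\{\delta, c, T_c(\delta),\dots,T_c^p(\delta)\}$. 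Once that localization is in hand, the two-sided estimate is routine bookkeeping with the constants from Lemma \ref{four} and Theorem \ref{BGIT}, and the lower bound is immediate from the linear-growth lower bound on $d_{\langle c\rangle}$.
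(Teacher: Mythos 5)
Your route (calibrating a single annular projection by a high power of a Dehn twist) is genuinely different from the paper's proof, which argues by induction on $\x(W)$: the base case $\x(W)=1$ is an explicit Farey--graph computation with $\gamma=T_{\infty}^{n}(0)$, where $d_{R(\infty)}(\gamma,0)=|n|+2$ is the ``dial'' and all other projections are at most $6$, and the inductive step cuts $W$ along an arc $a_{1}$ of $\{y\cap W\}$, applies the hypothesis to a complementary component $Y$, and bounds $d_{Z}(\gamma,y)\leq 4$ for every $Z\not\subseteq Y$ because such $Z$ meets $\pi_{W}(a_{1})$. Your proposal correctly identifies the crux (all subsurfaces other than the chosen annulus must have projection distance bounded independently of the twisting power $p$), but the justification you offer for it does not work as stated: the set $\{\delta,c,T_{c}(\delta),\dots,T_{c}^{p}(\delta)\}$ is not a geodesic in $C(W)$, and the Bounded Geodesic Image Theorem gives nothing for a subsurface $Z$ once some vertex of the geodesic you invoke has empty projection to $Z$ -- which is exactly the situation you need to handle. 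A correct localization needs something like: for $Z$ disjoint from $c$, $\pi_{Z}(T_{c}^{p}\delta)=\pi_{Z}(\delta)$; and for $Z$ meeting $c$ (other than the annulus about $c$), $d_{Z}(T_{c}^{p}\delta,c)=d_{T_{c}^{-p}(Z)}(\delta,c)$ is bounded via Lemma \ref{four} by a function of $i(c,\delta)$ alone. Moreover, for the upper bound you must also control $d_{Z}(\delta,y)$ uniformly over all $Z\subseteq W$, which forces $\delta$ (not merely ``a suitable curve or arc'') to be taken from $\pi_{W}$ of the arcs of $\{y\cap W\}$, as in the paper; with $c$ and $\delta$ chosen arbitrarily in $C(W)$ the claimed bound simply fails.

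Even after these repairs there remains a quantitative gap: your argument yields $\max_{Z\subseteq W}d_{Z}(\gamma,y)\leq\max\{k+1,\,C\}$ where $C$ is an additive constant coming from $i(c,\delta)$, Lemma \ref{four}, and the comparison $d_{Z}(\delta,y)$, and any honest accounting makes $C$ noticeably larger than $7$. So you prove the statement only for $k$ above that larger threshold, not for all $k\geq 6$ as the lemma asserts; the paper's reduction to the $\x(W)=1$ case via induction is precisely what keeps the off--annulus projections at most $6$ (base case) and at most $4$ (inductive step), and hence what makes the small threshold, and the clean $\pm 1$ conclusion, attainable. If you fix the localization as above and either accept a larger threshold or redo the small--complexity estimates sharply, your construction becomes a valid alternative proof of a slightly weaker statement; as written, it does not establish the lemma in the stated range of $k$.
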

\begin{proof}
We prove by the induction on the complexity of $W$. Let $\{a_{i}\}$ denote the arcs obtained by $\{y\cap W\}$. 

Suppose $\x(W)=1$. We assume $W=S_{1,1}$. (The same argument applies if $W=S_{0,4}$.) We may assume $\pi_{W}(\{a_{i}\})=\{0, \infty\}$ or $\{0, 1,\infty\}$ where the vertices of $C(W)$ are identified with $\mathbb{Q}.$ For simplicity, we assume the former. (The same argument aplies to the latter.) We take $\gamma=T_{\infty}^{n}(0)=n$. Then we have 
\begin{itemize}
\item $d_{Z}(\gamma, \{0, \infty\})\leq \4$ for all $Z\subseteq W$ except for $Z=R(\infty)$ since $d_{W}(\gamma, \infty)=d_{W}(0, \infty)=1$, see \cite{MM2}.
\item $d_{R(\infty)}(\gamma, \{0, \infty\})=d_{R(\infty)}(\gamma, \{0\})=|n|+2$, see \cite{MM2}. 
\end{itemize}
Therefore, $\text{for all $k\in \mathbb{N}_{\geq\4}$} $, $\displaystyle \max_{Z\subseteq W}d_{Z}(\gamma, \pi_{W}(\{a_{i}\}))= k.$  By arguments given in the proof of Lemma \ref{weekend}, we observe that $$k-1\leq \max_{Z\subseteq W}d_{Z}(\gamma, y) \leq k+1.$$

Suppose $\x(W)>1$. Let $Y$ be a complementary component of $a_{1}$ in $W$. Then $\x(Y)<\x(W)$ and $\{y\cap Y\}$ still fills $Y$. By the inductive hypothesis, there exists $\gamma \in C(Y)$ such that $\displaystyle k-1\leq \max_{Z\subseteq Y}d_{Z}(\gamma, y) \leq k+1.$ Now, if $Z\subseteq W$ and $Z\nsubseteq Y$ then $\pi_{Z}(\pi_{W}(a_{1}))\neq \emptyset$. Since $i( \gamma, \pi_{W}(a_{1}))=0$, we have $d_{Z}(\gamma,\pi_{W}(a_{1}) )\leq 2$. By the proof of Lemma \ref{weekend}, we have $d_{Z}(\pi_{W}(a_{1}),y)\leq 2$. Hence, $d_{Z}(\gamma,y ) \leq d_{Z}(\gamma,\pi_{W}(a_{1}) ) +d_{Z}(\pi_{W}(a_{1}),y ) \leq 4$. Therefore, we have $$k-1\leq \max_{Z\subseteq W}d_{Z}(\gamma, y) \leq k+1$$ for all $k\in \mathbb{N}_{\geq\4}$.

\end{proof}

By using Lemma \ref{sincl}, we have   
\begin{corollary}\label{sincl2}
Let $x,y\in C(S)$ such that $d_{S}(x,y)>2$ and that Corollary \ref{badpair2} holds when $i=1$ or $i=d_{S}(x,y)-1$. Then $\{ \L_{WT}^{m}(x,y) \setminus \L_{WT}^{n}(x,y) \} \neq \emptyset \mbox{ for all }n,m\in \mathbb{N}_{\geq M}$ such that $m-n>2.$
\end{corollary}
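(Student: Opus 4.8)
\textbf{Proof proposal for Corollary \ref{sincl2}.}

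The plan is to produce an explicit witness geodesic in $\L_{WT}^{m}(x,y)$ that fails to be $n$--weakly tight, by modifying a non--tight geodesic at the distinguished time point provided by the hypothesis. By Corollary \ref{badpair2}, since $\{\L(x,y)\setminus\L_{T}(x,y)\}$ is forced to be nonempty at time $1$ (the case $i=d_S(x,y)-1$ being symmetric), there is a geodesic $g=\{v_i\}$ with a complementary component $W$ of $F(v_0\cup v_2)$ satisfying $\x(W)>0$. First I would observe that since $d_S(x,y)>2$ and $v_0=x$ fills no proper subsurface containing both endpoints data, the arcs $\{y\cap W\}$ must fill $W$: otherwise $\pi_W(y)=\emptyset$ would place $y$ in a single complementary component of $\partial W$, contradicting $d_S(x,y)\geq 3$ together with the fact that $v_1$ already lies in $F(v_0\cup v_2)$ and $x$ already fills (with $v_2$) everything outside $W$. (This filling claim should be checked carefully — see below.)

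Next, I would apply Lemma \ref{sincl} to $W$ and $y$ with the choice $k=n+2$, which is admissible because $n\geq M\geq \4$. This yields a curve $\gamma\in C(W)$ with
\[
n+1\;\leq\;\max_{Z\subseteq W}d_Z(\gamma,y)\;\leq\;n+3.
\]
Now replace $v_1$ by $\gamma$. Since $\gamma\in C(W)\subseteq F(v_0\cup v_2)^c$ and $d_W(x,\gamma)$ is bounded by $M$ via Theorem \ref{BGIT} (as $\pi_W(v_2)=\emptyset$, so every $v_k$ with $k\neq 1$ projects to $W$ except $v_2$, and $v_0,\dots$ do), the replacement $g'=\{x,\gamma,v_2,\dots,y\}$ is still a genuine geodesic between $x$ and $y$ — this is the "first statement is obvious'' style point, exactly as in Lemma \ref{weekend}. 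I would then verify $g'\in\L_{WT}^m(x,y)$: at every time point other than $1$ the geodesic agrees with $g$ after the replacement only affects the subsurface structure at time $1$; more precisely, for any proper $Z$ with $\pi_Z(\gamma)\neq\emptyset$, either $Z\subseteq W$, in which case $d_Z(\gamma,y)\leq n+3\leq m+1$ and also $d_Z(x,\gamma)\leq M+2\leq m$ (bounding $d_Z(x,\pi_W(x))\leq M$ by BGIT and $d_Z(\pi_W(x),\gamma)\leq 2$ as in Lemma \ref{weaken}, Case 1; actually here we want the cleaner bound, and since $m\geq n+3>\4+3$ this is fine), or $Z\not\subseteq W$, in which case $\pi_Z(v_{i-1})\neq\emptyset$ or $\pi_Z(v_{i+1})\neq\emptyset$ and BGIT gives $d_Z(x,\gamma)\leq M$ or $d_Z(\gamma,y)\leq M$. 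So $g'$ is $m$--weakly tight (using $m>n+2\geq M+2$, though one should double-check the constant; one may need $m\geq n+3$, which follows from $m-n>2$ with integrality). At the same time, by the lower bound in Lemma \ref{sincl} there is $Z_0\subseteq W$ with $d_{Z_0}(\gamma,y)\geq n+1>n$, and $d_{Z_0}(x,\gamma)\leq M+2\leq n+? $ — here I must ensure $d_{Z_0}(x,\gamma)\leq n$ so that $\gamma$ genuinely violates $n$--weak tightness; since $n\geq M\geq\4$ and the Case 1 bound gives $d_{Z_0}(x,\gamma)\leq M+2$, I need $n\geq M+2$, which does not follow from $n\geq M$ alone — I would instead invoke the sharper bound $d_{Z_0}(x,\gamma)\leq 2+d_{Z_0}(x,\pi_W(x))$ and note $\pi_W(x)=\partial W$ projects trivially or boundedly; in fact $x$ fills $W^c$-side with $v_2$ so $\pi_W(x)$ is controlled, giving $d_{Z_0}(x,\gamma)\leq 2+M$ at worst, and then one chooses $k$ slightly larger, say $k=n+M+2$, so that $d_{Z_0}(\gamma,y)\geq n+M+1>n$ while still $d_{Z_0}(x,\gamma)\leq M+2\leq n$... this circular tension is the crux.

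The main obstacle, therefore, is calibrating the constant $k$ in the application of Lemma \ref{sincl} so that simultaneously (i) $\gamma$ is far enough from \emph{both} $x$ and $y$ in some subsurface $Z_0\subseteq W$ to break $n$--weak tightness, and (ii) $\gamma$ stays within distance $m$ of $x$ or $y$ in \emph{every} proper subsurface so that $m$--weak tightness is preserved. The first requires the replaced curve to be far from $x$ in $Z_0$, but the Case 1 argument (inherited from Lemma \ref{weekend}) forces $d_{Z_0}(x,\gamma)\leq M+2$ whenever $\pi_{Z_0}(v_0)=\pi_{Z_0}(v_2)=\emptyset$. The resolution I anticipate: one does \emph{not} make $\gamma$ far from $x$; instead one uses that for $Z_0=R(\infty)$ (the annular subsurface from the base case of Lemma \ref{sincl}), $d_{Z_0}(x,\gamma)\leq M+2$ automatically, and it is $d_{Z_0}(\gamma,y)\geq k-1$ that is large — so $\gamma$ violates $n$--weak tightness precisely because neither $d_{Z_0}(x,\gamma)$ nor $d_{Z_0}(\gamma,y)$ is $\leq n$: we need $M+2>n$?? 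No. The honest fix is that the \emph{endpoint} data must change: one should take $k$ with $n<k-1$ and $k+1\leq m$, i.e. $k\in(n+1,m-1)$, which is nonempty exactly when $m-n>2$; then $\gamma$ witnesses $\L_{WT}^m\setminus\L_{WT}^n$ because $\max_Z d_Z(\gamma,y)\in[k-1,k+1]$ means some $Z_0$ has $d_{Z_0}(\gamma,y)\geq k-1>n$ while $d_{Z_0}(x,\gamma)\leq M+2$, and then $n$--weak tightness fails \emph{provided} $M+2>n$; since this need not hold, one must instead arrange that $Z_0$ also has $d_{Z_0}(x,\gamma)$ large, which contradicts Case 1 — so the real content is that Lemma \ref{sincl} must be upgraded to control $d_Z(\gamma,x)$ too. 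I would therefore spend the bulk of the writeup re-examining which subsurface realizes the max in Lemma \ref{sincl}: it is an annular subsurface along a Dehn-twist axis, and along that axis $x$ (being disjoint from $W$'s relevant arcs only up to $F(v_0\cup v_2)$) actually does project with $d_{Z_0}(x,\gamma)$ comparable to the twisting power, so the bound $n<k-1$, $k+1\leq m$ suffices to separate the classes; the precise verification that $d_{Z_0}(x,\gamma)>n$ as well is the delicate step, and it hinges on $x$ essentially intersecting $W$ (which holds since $v_1\neq v_0$ and $W\subseteq F(v_0\cup v_2)$), giving $\pi_{Z_0}(x)\neq\emptyset$ with $d_{Z_0}(x,\gamma)\geq k-1-d_{Z_0}(x,y)$, and $d_{Z_0}(x,y)\leq M$ by BGIT since $x,y$ are endpoints of a geodesic all of whose vertices except possibly $v_1$ project to $Z_0$ — wait, $v_1=\gamma$ does project, so in fact $d_{Z_0}(x,y)\leq M$ outright along $g'$ is false; rather along the \emph{original} $g$ we have $\pi_{Z_0}(v_1)$ possibly empty — in any case $d_{Z_0}(x,y)$ is bounded by the original tight/ambient structure, so $d_{Z_0}(x,\gamma)\geq (k-1)-d_{Z_0}(x,y)\geq (k-1)-(\text{bounded})$, and choosing $k$ large within $(n+1+\text{bound},\,m-1)$ — nonempty once $m-n$ exceeds that bound — completes the proof. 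Consolidating: the clean statement to prove is that $d_{Z_0}(x,\gamma)$ and $d_{Z_0}(\gamma,y)$ are \emph{both} at least $k-1-C$ for an absolute $C$, whence $k\in(n+C+1,m-C)$ works and $m-n>2C+1$ suffices; matching this to the stated hypothesis $m-n>2$ will require either sharpening $C$ to be absorbed (e.g. $C$ can be taken $\leq 2$ by the triangle-inequality bookkeeping above, using $d_{Z_0}(x,y)\leq M$ is replaced by $d_{Z_0}(\gamma,\text{something})\leq 2$) or reading "$>2$'' with the understanding that the relevant constant is small — I would present it with $C$ explicit and note the hypothesis should read $m-n>2$ after optimizing the bookkeeping, which is the one genuinely fiddly computation in the argument.
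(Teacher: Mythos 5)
Your proposal stalls exactly where the paper's argument is easiest, because you have the position of $W$ backwards. $W$ is a \emph{complementary} component of $F(v_0\cup v_2)$, not a subsurface of it; since $x=v_0\subseteq F(v_0\cup v_2)$, the curve $x$ is disjoint from $W$, so $\pi_W(x)=\emptyset$ (the paper records this at the start of its proof). Your claim that ``$x$ essentially intersects $W$ \dots since $v_1\neq v_0$ and $W\subseteq F(v_0\cup v_2)$'' is therefore false, and the entire last portion of your argument --- upgrading Lemma \ref{sincl} to make $d_{Z_0}(x,\gamma)$ large, introducing an unspecified constant $C$, and needing $m-n>2C+1$ --- is aimed at a non-problem and in any case is never closed (you concede that matching the stated hypothesis $m-n>2$ is left to ``optimizing the bookkeeping''). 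The resolution of your ``circular tension'' is simply that for $Z\subseteq W$ the disjunct $d_Z(x,\gamma)\leq n$ is unavailable because $\pi_Z(x)=\emptyset$: to violate $n$--weak tightness one only needs some $Z_0\subseteq W$ with $d_{Z_0}(\gamma,y)>n$, and Lemma \ref{sincl} applied with $k=n+2$ (note $\{y\cap W\}$ fills $W$, since a curve of $W$ missing $y$ would be within distance one of both $x$ and $y$, contradicting $d_S(x,y)>2$ --- this too uses $x\cap W=\emptyset$) gives exactly this together with the upper bound $\max_{Z\subseteq W}d_Z(\gamma,y)\leq n+3\leq m$, which is what preserves $m$--weak tightness at time $1$; for $Z\nsubseteq W$ the Lemma \ref{weekend}--type argument gives $d_Z(x,\gamma)\leq M$ or $d_Z(\gamma,y)\leq M$. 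Nowhere does $d_Z(x,\gamma)$ need to be large.

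There is a second gap: you keep the tail $v_2,\dots,y$ of the original geodesic and assert $m$--weak tightness at times $i\geq 2$ because the new geodesic ``agrees with $g$'' there. But $g$ is merely some geodesic furnished by Corollary \ref{badpair2}; nothing makes it $m$--weakly tight away from time $1$. The paper avoids this by discarding the tail and connecting $\gamma$ to $y$ by a tight geodesic, which is $M$--weakly tight (hence $m$--weakly tight) at every later time point. With these two corrections your construction collapses to the paper's: replace $v_1$ by the $\gamma$ of Lemma \ref{sincl}, re-tighten from $\gamma$ to $y$, and use $m\geq n+3$, which follows from $m-n>2$ and integrality.
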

\begin{proof}
We assume Corollary \ref{badpair2} holds when $i=1$ where $x=v_{0}$ and $y=v_{d_{S}(x,y)}$. We let $W$ denote a complementary component of $F(v_{0}\cup v_{2})$ whose complexity is bigger than $0$. Note that $\pi_{W}(x)=\emptyset.$
Since $d_{S}(x,y)>2$, the arcs obtained by $\{y \cap W\}$ fill $W$. Therefore, we can apply Lemma \ref{sincl}: there exists $\gamma \in C(W)$ such that $$n< \max_{Z\subseteq W}d_{Z}(\gamma,y)\leq n+3$$ for all $n\in \mathbb{N}_{\geq M}$.
Now, we construct a geodesic in $\{ \L_{WT}^{m}(x,y) \setminus \L_{WT}^{n}(x,y) \}$ where $m-n>2$; 
we replace $v_{1}$ by $\gamma$, and connect $\gamma$ and $y$ by a tight geodesic, a new geodesic is a desired geodesic since it is 
\begin{itemize}
\item $(n+3)$--weakly tight at time $1$--point but not $n$--weakly tight. (If $Z\nsubseteq W$ then $d_{Z}(x,\gamma)\leq M$ or $d_{Z}(\gamma, y)\leq M$ by arguments given in Lemma \ref{weekend}.) 
\item $M$--weakly tight at time $i$--point for all $i>1$ by tightness.  
\end{itemize}
\end{proof}

\end{appendices}

\bibliographystyle{abbrv}
\bibliography{references.bib}

\end{document}